\documentclass[11pt]{article}
\usepackage[T1]{fontenc}
\usepackage{amsmath,amsfonts,amssymb,theorem,color}
\usepackage{mathbbol}


\theoremstyle{change} 

\newenvironment{proof}{\noindent \bf Proof:   \rm}{\hspace*{\fill}
$\square$ \medskip}

{\theorembodyfont{\itshape}
\newtheorem{lemma}{Lemma}
\newtheorem{proposition}[lemma]{Proposition}

\newtheorem{corollary}[lemma]{Corollary}
\newtheorem{theorem}[lemma]{Theorem}}
{\theorembodyfont{\upshape}

\newtheorem{definition}[lemma]{Definition}
\newtheorem{remark}[lemma]{Remark}

\newtheorem{example}[lemma]{Example}
}

\DeclareMathAlphabet{\mathpzc}{OT1}{pzc}{m}{it}

\usepackage[all,cmtip]{xy}

\usepackage{appendix}

\usepackage{graphicx}

    \mathchardef\mhyphen="2D

\newcommand{\eq}[1][r]
   {\ar@<-3pt>@{-}[#1]
    \ar@<-1pt>@{}[#1]|<{}="gauche"
    \ar@<+0pt>@{}[#1]|-{}="milieu"
    \ar@<+1pt>@{}[#1]|>{}="droite"
    \ar@/^2pt/@{-}"gauche";"milieu"
    \ar@/_2pt/@{-}"milieu";"droite"}

\begin{document}

\title{Multipliers of a semigroup object in a monoidal category}   
\author{Laurent Poinsot\\
\small CREA, \'Ecole de l'Air et de l'Espace, Salon-de-Provence and\\
\small LIPN, UMR CNRS 7030, {Universit\'e Sorbonne Paris Nord, Villetaneuse, France. }\\ 
{\tt \small poinsotlaurent@gmail.com}}
 
 \date{}                                  


\maketitle

\begin{abstract}
The monoid of  multipliers  of a semigroup object  in a monoidal category is introduced, arising from an abstraction of the  definition of the translational hull of an ordinary semigroup or of the multiplier algebra of a Banach algebra and dually, the monoid of comultipliers of a cosemigroup object is obtained. Its set-theoretic version, the classical translational hull, is shown to provide a functor from a subcategory of ordinary semigroups to monoids, similar to a left adjoint. The abstract multiplier monoid of a semigroup object is related to the concrete translational hull of its convolution semigroup by a ``concretization'' homomorphism. For semigroup objects for which this homomorphism is onto, the multiplier construction is functorial and the concretization homomorphisms form a natural epimorphism. 
\vskip 6pt
\noindent
 {\bf MSC 2020}:  {18C40, 20M30,  18M05}  \newline
\vskip 1pt
\noindent {\bf Keywords:} {Semigroup object,  multiplier, 
translational hull, convolution semigroup.  }
\end{abstract}

\section{Introduction}

Let $\mathsf{S}=(S,*)$ be a set-theoretic semigroup. By a left (resp. right) translation is meant a map $L$ (resp. $R$) from $S$ to itself such that $L(x*y)=L(x)*y$ (resp., $R(x*y)=x*R(y)$). Let $\mathit{LTr}(\mathsf{S})$ and $\mathit{RTr}(\mathsf{S})$ be respectively the set of all left and of all right translations of $\mathsf{S}$. For instance given $x\in S$, $\mathfrak{L}_x\colon S\to S$, $y\mapsto x*y$ and $\mathfrak{R}_x\colon y\mapsto y*x$ are respectively left and right translations. Associativity of $*$ provides for such {\em inner} translations, the following {\em linking} relation: $\mathfrak{R}_x(y)*z=y*\mathfrak{L}_x(z)$, $x,y,z\in S$. More generally a pair $(L,R)$ consisting of a left and a right translations such that for each $y,z\in S$, $R(y)*z=y*L(z)$ is referred to as a {\em multiplier} of $\mathsf{S}$. The set $\mathit{TrHull}(\mathsf{S})$ of all multipliers of $\mathsf{S}$ is called the {\em translational hull} of $\mathsf{S}$ (see e.g.~\cite{Clifford}), and in general it contains over multipliers than only the inner ones, that is, those of the form $\mathfrak{M}_{\mathsf{S}}(x)=(\mathfrak{L}_{x},\mathfrak{R}_x)$, $x\in S$. Multipliers may be composed by $(L',R')\star (L,R):=(L'\circ L,R\circ R')$ and $(id_S,id_S)$ acts as an identity for $\star$, making $\mathsf{TrHull}(\mathsf{S}):=(\mathit{TrHull}(\mathsf{S}),\star,(id_S,id_S))$ a monoid, and $\mathfrak{M}_{\mathsf{S}}\colon \mathsf{S}\to |\mathsf{TrHull}(\mathsf{S})|$, $x\mapsto (\mathfrak{L}_x,\mathfrak{R}_x)$ a homomorphism of semigroups, called the {\em canonical homomorphism} of $\mathsf{S}$, where $|-|\colon \mathbf{Mon}\to\mathbf{Sem}$ is the obvious forgetful functor. In fact,  $\mathit{TrHull}(\mathsf{S})$ is a submonoid of the product monoid $\mathsf{LTr}(\mathsf{S})\times \mathsf{RTr}(\mathsf{S})^{op}$, with $\mathsf{LTr}(\mathsf{S}):=(\mathit{LTr}(\mathsf{S}),\circ,id_S)$ and $\mathsf{RTr}(\mathsf{S}):=(\mathit{RTr}(\mathsf{S}),\circ,id_S)$, where for a monoid $\mathsf{M}=(M,*,1)$, $\mathsf{M}^{op}:=(M,*^{op},1)$ denotes  its {\em opposite} with $x*^{op}y:=y*x$, $x,y\in M$. 

The following result is easy and shows that the translational hull is not very interesting for monoids, what we could have suspected by thinking of it as a kind of unitarization. 
\begin{lemma}\label{lem:can_hom_is_iso_for_mon}
Let $\mathsf{M}=(M,*,1)$ be a monoid. Then,  the canonical homomorphism  of semigroups $\mathfrak{M}_{|\mathsf{M}|}\colon (M,*)\to |\mathsf{TrHull}(|\mathsf{M}|)|$ lifts to an isomorphism of monoids $\mathfrak{M}_{\mathsf{M}}\colon \mathsf{M}\to \mathsf{TrHull}(|\mathsf{M}|)$, that is, $\mathfrak{M}_{\mathsf{M}}$ is an isomorphism, and $|\mathfrak{M}_{\mathsf{M}}|=\mathfrak{M}_{|\mathsf{M}|}$.
\end{lemma}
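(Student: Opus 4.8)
The plan is to show that $\mathfrak{M}_{\mathsf{M}}$ is a bijective morphism of monoids whose underlying map of sets coincides with $\mathfrak{M}_{|\mathsf{M}|}$; since a bijective homomorphism of monoids is automatically an isomorphism (its set-theoretic inverse being forced to be multiplicative), this is enough. The identity $|\mathfrak{M}_{\mathsf{M}}|=\mathfrak{M}_{|\mathsf{M}|}$ will hold by construction, as both have the same underlying assignment $x\mapsto(\mathfrak{L}_x,\mathfrak{R}_x)$, so the only thing to add at the monoid level is unit preservation, namely that $1$ is sent to $(id_M,id_M)$. This is immediate from $\mathfrak{L}_1(y)=1*y=y$ and $\mathfrak{R}_1(y)=y*1=y$ for all $y\in M$; compatibility with $\star$ is already available from the semigroup-level statement recalled in the introduction.

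For injectivity I would use the unit as a ``probe'': if $\mathfrak{M}_{\mathsf{M}}(x)=\mathfrak{M}_{\mathsf{M}}(x')$ then in particular $\mathfrak{L}_x=\mathfrak{L}_{x'}$, and evaluating at $1$ gives $x=\mathfrak{L}_x(1)=\mathfrak{L}_{x'}(1)=x'$. Surjectivity is the substantive part, and the only place where the monoid hypothesis is genuinely exploited. Given an arbitrary multiplier $(L,R)$, set $x:=L(1)\in M$. Since $L$ is a left translation, $L(z)=L(1*z)=L(1)*z=x*z=\mathfrak{L}_x(z)$ for every $z\in M$, so $L=\mathfrak{L}_x$. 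Dually, since $R$ is a right translation, $R(y)=R(y*1)=y*R(1)=\mathfrak{R}_{R(1)}(y)$, so $R=\mathfrak{R}_{R(1)}$.

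The step I expect to be the crux is the coherence of these two reconstructions: a priori $L$ and $R$ have been identified as inner translations by the possibly distinct elements $L(1)$ and $R(1)$, and one must see that the pair is inner \emph{for a single} element. This is exactly what the linking relation $R(y)*z=y*L(z)$ supplies: evaluating it at $y=z=1$ gives $R(1)=R(1)*1=1*L(1)=L(1)=x$, whence $R=\mathfrak{R}_x$ and therefore $(L,R)=(\mathfrak{L}_x,\mathfrak{R}_x)=\mathfrak{M}_{\mathsf{M}}(x)$. Collecting these observations, $\mathfrak{M}_{\mathsf{M}}$ is a bijective homomorphism of monoids lifting $\mathfrak{M}_{|\mathsf{M}|}$, hence the desired isomorphism.
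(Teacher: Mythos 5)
Your proof is correct, and it follows essentially the same strategy as the paper: the paper leaves Lemma~\ref{lem:can_hom_is_iso_for_mon} unproved as ``easy,'' but its proof of the monoidal-category generalization (Proposition~\ref{prop:mult_alg_of_a_monoid}) proceeds exactly as you do, reconstructing $L$ and $R$ as inner translations via the unit and then using the linking relation evaluated at the unit to see that $L(1)=R(1)$.
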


Historically multipliers appeared naturally with the study of ideal extensions of semigroups~\cite{Petrich} but dit not remain confined to the theory of ordinary semigroups. For instance the Banach algebra of multipliers of a commutative  (non-unital)  Banach algebra was considered in~\cite{Wang} and the $C^*$-algebra of multipliers of a (non-unital) $C^*$-algebra was studied in~\cite{Busby}. 

$\mathsf{S}$ acts on the set $S$ by left (resp. right) translations. Left (right) multipliers then are in fact nothing but endomorphisms of the left (resp. right) $\mathsf{S}$-act $S$ itself.  Now given a left (resp. right) translation $L$ (resp. $R$), $\mathsf{g}(L)\colon S\times S\to S$, $(x,y)\mapsto x*L(y)$ (resp. $\mathsf{d}(R)\colon S\times S\to S$, $(x,y)\mapsto R(x)*y$) is a homomorphism of left (resp. right) $\mathsf{S}$-acts from $S\times S$ to $S$, where $\mathsf{S}$ acts on $S\times S$ by left (resp. right) multiplication on the first (resp. second) factor. $(L,R)$ then is a multiplier iff $\mathsf{g}(L)=\mathsf{d}(R)$, that is, the translational hull of $\mathsf{S}$ is nothing but the pullback of $\mathsf{g}$ along $\mathsf{d}$. Such observations are not only true for multipliers of semigroups, but also for those of rings or Banach algebras. These  algebraic structures have in  common the fact  that they are all {\em semigroup objects} in some monoidal category: that of sets for the ordinary semigroups, of abelian groups for the rings, and Banach spaces for Banach algebras. This suggests to use  the language of monoidal categories to describe multipliers more abstractly. 

In doing so we are left with at least two possible ways. One way is to ``internalize'' the multiplier construction, as in~\cite{Bohm}. It requires the assumptions that the monoidal category into consideration is also  closed and has some specific pullbacks (in order to define an internal ``multiplier monoid object''), or when it is not closed, to consider  what is called therein ``$\mathbb{M}$-morphisms'' as a substitute for morphisms into an internal multiplier monoid object that may not exist. Without asking for extra assumptions but having in mind the existence of a multiplier monoid, another way  is to mimic the construction of the translational hull by considering abstract endomorphisms of a semigroup object $\mathsf{S}$ in a monoidal category $\mathbb{C}$, seen as a left and a right act on itself. In doing so naturally arises an ordinary monoid which is called the {\em multiplier monoid} $\mathsf{Mult}_{\mathbb{C}}(\mathsf{S})$ of $\mathsf{S}$ (see Section~\ref{sec:mult_mon}), and which reduces to the usual translational hull for a usual semigroup.  This alternative approach is consistent with the general philosophy adopted in this note that any semigroup object in any monoidal category may be ``concretized'' as an ordinary semigroup and finds its source in the original motivation of the author to study the functorial behavior of the ordinary translational hull construction.

In fact each monoidal category  $\mathbb{C}$ naturally comes together with a canonical  monoidal functor into the cartesian category of sets, namely the ``convolution functor'' which assigns to an object $c$ of the underlying category $C$ of $\mathbb{C}$, the set $C(I,c)$ of morphisms from the monoidal unit $I$ into this object (its so-called ``generalized elements''). As any monoidal functor, the convolution functor lifts to a functor from the category of semigroup objects of the domain monoidal category into that of the codomain category,  and so assigns  an ordinary semigroup, its ``concretization'' $\mathsf{Conv}_I(\mathsf{S})$ to an abstract semigroup $\mathsf{S}$. The generalized elements also play an important role in that they induce inner multipliers (see Section~\ref{sec:inner_mult_and_gen_elt}). 
A {\em concretization homomorphism} then relates $\mathsf{Mult}_{\mathbb{C}}(\mathsf{S})$ with $\mathsf{TrHull}(\mathsf{Conv}_I(\mathsf{S}))$ (Definition~\ref{def:concretization_morphism}). Section~\ref{sec:functoriality} is entirely devoted to the study this convolution construction.

The functorial behavior of both $\mathsf{TrHull}$ and $\mathsf{Mult}$ is explored in Section~\ref{sec:around_ord_sem}. More precisely   it is shown that under some conditions,  any homomorphism of semigroups $\mathsf{S}\xrightarrow{f}|\mathsf{TrHull}(\mathsf{T})|$, where $\mathsf{S},\mathsf{T}$ are ordinary semigroups, admits a unique ``extension'' $\mathsf{TrHull}(\mathsf{S})\xrightarrow{f^\sharp}\mathsf{TrHull}(\mathsf{T})$ as a homomorphism of monoids (Theorem~\ref{thm:extension}). This result is used to define a {\em translational hull functor} from a subcategory of semigroup  to monoids, which is very close to be a left adjoint to the forgetful functor from monoids to semigroups (Theorem~\ref{thm:close_to_left_adjoint}). In Section~\ref{sec:around_ord_sem_2}, using the notion of {\em concrete semigroup object}, that is, a semigroup object whose concretization homomorphism is onto, it is proved that under some conditions, any homomorphism of semigroups $\mathsf{Conv}_{I}(\mathsf{S})\xrightarrow{f}|\mathsf{Mult}_{\mathbb{D}}(\mathsf{T})|$, where $\mathsf{S},\mathsf{T}$ are semigroup objects in possibly different monoidal categories $\mathbb{C},\mathbb{D}$, has a unique ``extension'' as a homomorphism of monoids $\mathsf{Mult}_{\mathbb{C}}(\mathsf{S})\xrightarrow{f^M}\mathsf{Mult}_{\mathbb{D}}(\mathsf{T})$  (Theorem~\ref{thm:extension_for_internal_multipliers}). This result then is used to provide a {\em multiplier monoid functor} from a subcategory of semigroup objects in {\em any} monoidal category to monoids, and turns the concretization homomorphisms into a natural epimorphism  between this functor and the composition of the translational hull functor  by the convolution semigroup functor.

\section{Notations and prerequisites}\label{sec:prerequisites}

We next introduce some notions which could be unfamiliar for some readers.

\paragraph{Some notations:}

Let $C$ be a category and let $c,c'$ be $C$-objects. $C(c,c')$ denotes the hom-sets of $C$-morphisms with domain $c$ and codomain $c'$ and given a functor $F\colon C\to D$, if needed one denotes by $F_{c,c'}$ the hom-component $C(c,c')\to D(Fc,Fc')$ of $F$. $\mathbf{Set}$ and $\mathbf{Cat}$ are the categories of sets (and maps) and of categories (and functors). Given functors $F,G\colon C\to D$,  that $\alpha$ is a natural transformation from $F$ to $G$ is denoted by $\alpha\colon F\Rightarrow G\colon C\to D$ and $\mathit{Nat}(F,G)$ stands for the set of all such natural transformations.


\paragraph{Monoidal categories and monoidal functors:} A monoidal category is denoted $\mathbb{C}=({C},-\otimes-,I,\alpha,\lambda,\rho)$ or sometimes simply $\mathbb{C}=(C,\otimes,I)$, with $-\otimes-\colon C\times C\to C$ the tensor product bifunctor, $I$ the unit object and the natural isomorphisms $((a\otimes b)\otimes c\xrightarrow{\alpha_{a,b,c}}a\otimes(b\otimes c))_{a,b,c}$, $(I\otimes c\xrightarrow{\lambda_c}c)_{c}$ and $(c\otimes I\xrightarrow{\rho_c}c)_c$ which are coherence constraints of associativity, and of left and right unit, respectively. E.g. $\mathbb{Set}$ is the  monoidal category of sets under the cartesian product and initial object $1:=\{\, 0\,\}$ with $\mathbf{Set}$ as its underlying category. 

Let $\mathbb{C}$ be a monoidal category. Then one may define its {\em opposite} $\mathbb{C}^{op}:=(C^{op},\otimes^{op},I,\alpha^{-1},\lambda^{-1},\rho^{-1})$ where $C^{op}$ is the usual opposite category of $C$, and $\otimes^{op}\colon C^{op}\times C^{op}=(C\times C)^{op}\to C^{op}$ is the usual opposite of $\otimes$. 

One also defines the {\em tranpose} $\mathbb{C}^t=(C,\otimes^t,I,\alpha^t,\lambda^t,\rho^t)$ of $\mathbb{C}$ with $(c\xrightarrow{f}d)\otimes^t(c'\xrightarrow{f'}d'):=c'\otimes c\xrightarrow{f'\otimes f}d'\otimes d$, that is, $\otimes^t\colon C\times C\simeq C\times C\xrightarrow{\otimes}C$,  where the isomorphism is given by $((a,b)\xrightarrow{(f,g)}(c,d))\mapsto ((b,a)\xrightarrow{(g,f)}(d,c))$, $\alpha^t_{a,b,c}\colon (a\otimes^tb)\otimes^tc=c\otimes(b\otimes c)\xrightarrow{\alpha_{c,b,a}^{-1}}(c\otimes b)\otimes a=a\otimes^t(b\otimes^t c)$, $\lambda_a^t=I\otimes^t a=a\otimes I\xrightarrow{\rho_a}a$, $\rho_a^{t}=a\otimes^t I=I\otimes a\xrightarrow{\lambda_a}a$. 

Let $\mathbb{C}:=(C,\otimes_C,I_C,\alpha,\lambda,\rho)$ to $\mathbb{D}=(D,\otimes_D,I_D,\alpha,\lambda,\rho)$ be monoidal categories. A {\em lax monoidal functor} from $\mathbb{C}$ to $\mathbb{D}$ is a triple $\mathbb{F}:=(F,F^{(2)},F_0)$ where $F\colon C\to D$ is a functor (the {\em underlying functor} of $\mathbb{F}$), $(F(c_1)\otimes_D F(c_2)\xrightarrow{F^{(2)}_{c_1,c_2}}F(c_1\otimes_C c_2))_{c_1,c_2}$  is a natural transformation and $F_0\colon J\to FI$  is a $D$-morphism, subject to coherence conditions (see~\cite{Street} for details). A lax monoidal functor is called {\em strong monoidal} (resp. {\em strict monoidal}) when $F^{(2)}$ and $F_0$ are isomorphisms (resp. identities). 

Let $\mathbb{C}=(C,\otimes_C,I_C)\xrightarrow{\mathbb{F}=(F,F^{(2)},F_0)}\mathbb{D}=(D,\otimes_D,I_D)\xrightarrow{\mathbb{G}=(G,G^{(2)},G_0)}\mathbb{E}=(E,\otimes_E,I_E)$ be monoidal functors. Let $\mathbb{G}\circ\mathbb{F}:=(G\circ F,(G\circ F)^{(2)},(G\circ F)_0)$ with $(G\circ F)^{(2)}_{c,d}:=G(F_{c,d}^{(2)})\circ G^{(2)}_{F(c),F(d)}$ and $(G\circ F)_0:=G(F_0)\circ G_0$.  
With identity $\mathbb{id}_{\mathbb{C}}$ at $\mathbb{C}$ the strict monoidal functor with underlying functor $id_C$, this provides the category $\mathbf{MonCat}$ of monoidal categories and lax monoidal functors. 

\paragraph{Semigroup objects:}  
A {\em semigroup object} (or simply {\em semigroup}) in $\mathbb{C}$ is a pair $(S,S\otimes S\xrightarrow{\mu}S)$ such that the diagram below on the left 
\begin{equation}
\xymatrix@R=1em{
\ar[d]_{\alpha_{S,S,S}}(S\otimes S)\otimes S\ar[rr]^{\mu\otimes id_S} & &S\otimes S\ar[d]^{\mu}&&S\otimes S\ar[r]^{\mu}\ar[d]_{f\otimes f} & S\ar[d]^{f}\\
S\otimes (S\otimes S)\ar[r]_-{id_S\otimes \mu}&S\otimes S \ar[r]_{\mu}&S&&S'\otimes S'\ar[r]_{\mu'} & S'\\
}
\end{equation}
commutes, while a {\em semigroup morphism} $(S,\mu)\xrightarrow{f}(S',\mu')$ is a $C$-morphism $f\colon S\to S'$ making the above right diagram commutative. This defines the category $\mathbf{Sem}\mathbb{C}$ of semigroups in $\mathbb{C}$. $\mathbf{Sem}:=\mathbf{Sem}(\mathbb{Set})$ is the category of ordinary semigroups and their homomorphisms. The category $\mathbf{Cosem}\mathbb{C}$ of {\em cosemigroup objects} (or simply {\em cosemigroups}) in $\mathbb{C}$ is defined to be $(\mathbf{Sem}(\mathbb{C}^{op}))^{op}$. Let $(\mathbb{C},\sigma)$ be a symmetric monoidal category. Given a semigroup $\mathsf{S}=(S,\mu)$ (resp. a cosemigroup $\mathsf{S}=(S,\delta)$) one has its {\em opposite} (co)semigroup $\mathsf{S}^{op}:=(S,\mu\circ \sigma_{S,S})$ (resp. $\mathsf{S}^{op}:=(S,\sigma_{S,S}\circ \delta)$). A semigroup (resp. cosemigroup) $(S,\mu)$ (resp. $(S,\delta)$) is said to be {\em commutative}) (resp. {\em cocommutative}) when $\mathsf{S}=\mathsf{S}^{op}$. By ${}_{c}\mathbf{Sem}(\mathbb{C})$ and ${}_{coc}\mathbf{Cosem}(\mathbb{C})$ we denote the full subcategories of $\mathbf{Sem}(\mathbb{C})$ and $\mathbf{Cosem}(\mathbb{C})$ respectively spanned by the (co)commutative (co)semigroups. One has ${}_{coc}\mathbf{Cosem}\mathbb{C}=({}_c\mathbf{Sem}(\mathbb{C}^{op}))^{op}$. The next result is clear.
\begin{lemma}\label{lem:sem_in_C_is_the_same_sem_in_transpose}
$\mathbf{Sem}(\mathbb{C})=\mathbf{Sem}(\mathbb{C}^t)$ and dually $\mathbf{Cosem}(\mathbb{C})=\mathbf{Cosem}(\mathbb{C}^t)$. 
\end{lemma}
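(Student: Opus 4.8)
The plan is to prove the stronger fact that the two categories have literally the same objects and the same morphisms, so that the asserted equality holds on the nose rather than merely up to equivalence. The crucial elementary observation is that on the ``diagonal'' the transposed tensor agrees with the original one: since $\otimes^t$ swaps its two arguments, one has $S\otimes^t S = S\otimes S$ on objects and $f\otimes^t f = f\otimes f$ on morphisms. Consequently a candidate multiplication $\mu\colon S\otimes^t S\to S$ in $\mathbb{C}^t$ is literally a candidate multiplication $\mu\colon S\otimes S\to S$ in $\mathbb{C}$, and a $C$-morphism $f$ satisfies $\mu'\circ(f\otimes^t f)=f\circ\mu$ if and only if it satisfies $\mu'\circ(f\otimes f)=f\circ\mu$. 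Thus the underlying data of objects and the defining equation of morphisms coincide verbatim, and only the associativity axiom for objects remains to be checked.

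For associativity I would unwind the associativity square of the definition in $\mathbb{C}^t$ using the explicit formulas for $\otimes^t$ and $\alpha^t$, evaluated at $a=b=c=S$. Here $(S\otimes^t S)\otimes^t S = S\otimes(S\otimes S)$, $S\otimes^t(S\otimes^t S)=(S\otimes S)\otimes S$, and $\alpha^t_{S,S,S}=\alpha^{-1}_{S,S,S}$; moreover the swap gives $\mu\otimes^t id_S = id_S\otimes\mu$ and $id_S\otimes^t\mu = \mu\otimes id_S$. Substituting these, the $\mathbb{C}^t$-associativity square reduces to the single equation
$$\mu\circ(id_S\otimes\mu)=\mu\circ(\mu\otimes id_S)\circ\alpha^{-1}_{S,S,S}.$$
Precomposing the $\mathbb{C}$-associativity equation $\mu\circ(\mu\otimes id_S)=\mu\circ(id_S\otimes\mu)\circ\alpha_{S,S,S}$ with the isomorphism $\alpha^{-1}_{S,S,S}$ yields exactly this equation, and the argument is reversible; hence $(S,\mu)$ is associative in $\mathbb{C}$ precisely when it is associative in $\mathbb{C}^t$. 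Since identities and composition in both $\mathbf{Sem}(\mathbb{C})$ and $\mathbf{Sem}(\mathbb{C}^t)$ are inherited from the common underlying category $C$, the two categories are equal.

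For the dual statement I would not repeat the computation but instead invoke the case just proved together with the definition $\mathbf{Cosem}(\mathbb{C})=(\mathbf{Sem}(\mathbb{C}^{op}))^{op}$. Applying the semigroup result to the monoidal category $\mathbb{C}^{op}$ gives $\mathbf{Sem}(\mathbb{C}^{op})=\mathbf{Sem}((\mathbb{C}^{op})^t)$, and a short verification that transpose and opposite commute, i.e. $(\mathbb{C}^{op})^t=(\mathbb{C}^t)^{op}$ (both have underlying category $C^{op}$, agree with $b\otimes a$ on objects, and carry matching constraints $\alpha^t,\lambda^t,\rho^t$), then yields $\mathbf{Cosem}(\mathbb{C})=(\mathbf{Sem}((\mathbb{C}^t)^{op}))^{op}=\mathbf{Cosem}(\mathbb{C}^t)$.

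The only genuine bookkeeping hazard, and the step I expect to demand the most care, is keeping straight which of $\alpha,\alpha^{-1}$ and which of $id_S\otimes\mu,\mu\otimes id_S$ survives the transpose, since the swap interchanges the two legs of every tensor; once the substitutions are made correctly the equivalence is immediate because $\alpha$ is invertible. The commutation $(\mathbb{C}^{op})^t=(\mathbb{C}^t)^{op}$ is the other point where one must check that the three coherence constraints match after both operations, but this is a direct unravelling of the definitions.
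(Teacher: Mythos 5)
Your proof is correct, and it follows exactly the intended (and only natural) route: the paper states this lemma without proof (``The next result is clear''), and the content is precisely the diagonal identifications $S\otimes^t S=S\otimes S$, $f\otimes^t f=f\otimes f$, $\mu\otimes^t id_S=id_S\otimes\mu$, $\alpha^t_{S,S,S}=\alpha^{-1}_{S,S,S}$, which convert the two associativity squares into one another by composing with the invertible $\alpha_{S,S,S}$, together with the observation $(\mathbb{C}^{op})^t=(\mathbb{C}^t)^{op}$ for the dual half. Your careful tracking of which of $\mu\otimes id_S$ and $id_S\otimes\mu$ survives the transpose is exactly the bookkeeping the paper leaves to the reader, and it is carried out correctly.
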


Given a lax monoidal functor $\mathbb{F}\colon \mathbb{C}\to\mathbb{C}'$,  $\widetilde{\mathbb{F}}(S,\mu):=(FS,FS\otimes' FS\xrightarrow{F^{(2)}_{S,S}}F(S\otimes S))$ and $\widetilde{\mathbb{F}}(f):=Ff$ define an induced functor $\widetilde{\mathbb{F}}\colon \mathbf{Sem}\mathbb{C}\to\mathbf{Sem}\mathbb{C}'$, such that the diagram
\begin{equation}
\xymatrix@R=1em{
\mathbf{Sem}\mathbb{C}\ar[d]_{U_s}\ar[r]^{\widetilde{\mathbb{F}}}&\mathbf{Sem}\mathbb{C}'\ar[d]^{U'_s}\\
C \ar[r]_{F}& C'
}
\end{equation}
commutes, with $U_s,U'_s$ the obvious forgetful functors. In fact this provides a functor $\mathit{Sem}$ from $\mathbf{MonCat}$ to $\mathbf{Cat}$. For instance, given a symmetric monoidal category $(\mathbb{C},\sigma)$, one has a strong monoidal functor $\mathbb{\Sigma}=(id_C,\sigma,id_I)\colon \mathbb{C}\to\mathbb{C}^t$ and thus an induced functor $\widetilde{\mathbb{\Sigma}}\colon \mathbf{Sem}(\mathbb{C})\to \mathbf{Sem}(\mathbb{C}^t)=\mathbf{Sem}(\mathbb{C})$. In fact $\widetilde{\mathbb{\Sigma}}\mathsf{S}=\mathsf{S}^{op}$. 


$\mathbf{MonCat}$ has a binary product $\mathbb{C}\times\mathbb{C}'$, with 
\begin{enumerate}
\item tensor $(C\times C')\times(C\times C')\xrightarrow{\Sigma_{2,3}}(C\times C)\times (C'\times C') \xrightarrow{\otimes\times \otimes'}C\times C'$, where $\Sigma_{2,3}((c,c'),(d,d'))=((c,d),(c',d'))$. 
\item unit object $(I,I')$,
\item and with obvious coherence constraints. 
\end{enumerate}
$\mathbf{MonCat}$ also has a terminal object namely the monoidal category $\mathbb{1}$ with underlying category $1$, the category with only one object and its identity arrow. One observes that $\mathbf{Sem}(\mathbb{C}\times\mathbb{C}')\simeq \mathbf{Sem}(\mathbb{C})\times\mathbf{Sem}(\mathbb{C}')$ canonically.

 The hom-functor $C(-,-)\colon {C}^{op}\times{C}\to \mathbf{Set}$, $C(b\xrightarrow{f}a,c\xrightarrow{h}d)\colon C(a,c)\to C(b,d)$, $g\mapsto h\circ g\circ f$, is a monoidal functor $\mathbb{Conv}=(C(-,-),\Gamma,\gamma)\colon \mathbb{C}^{op}\times\mathbb{C}\to\mathbb{Set}$, with $\Gamma_{(a,b),(c,d)}\colon C(a,b)\times C(c,d)\to C(a\otimes c,b\otimes d)$, $(f,g)\mapsto f\otimes g$, for $a,b,c,d\in \mathit{Ob} C$, and $\gamma\colon 1\to C(I,I)$, $0\mapsto id_I$.  Whence $\mathbb{Conv}$ induces   a functor $\mathsf{Conv}:=\widetilde{\mathbb{Conv}}\colon \mathbf{Cosem}(\mathbb{C})^{op}\times \mathbf{Sem}(\mathbb{C})\to \mathbf{Sem}$. Given  a semigroup $(S,S\otimes S\xrightarrow{\mu}S)$ and a cosemigroup  $(c,c\xrightarrow{\delta}c\otimes c)$ in $\mathbb{C}$, the the hom-set $C(c,S)$ becomes a semigroup, called the {\em convolution semigoup} $\mathsf{Conv}((c,\delta),(S,\mu))$ with multiplication $(g,f)\mapsto g\bullet_{{\mu,\delta}}f= \mu\circ (g\otimes f)\circ \delta$. Since $(I,\lambda_I^{-1}=\rho_{I}^{-1})$ is always a cosemigroup in any monoidal category, one has a functor $\mathsf{Conv}_I\colon \mathbf{Sem}(\mathbb{C})\to\mathbf{Sem}$ given by $\mathsf{Conv}_I((S,\mu)\xrightarrow{f}(S',\mu')):=(C(I,S),\bullet_\mu)\xrightarrow{C(id_I,f)}(C(I,S'),\bullet_{\mu'})$ with $\bullet_{\mu}:=\bullet_{\mu,\lambda_I^{-1}}$.

\section{Actions of a semigroup object}\label{sec:actions}

Let $\mathbb{C}:=(C,\otimes,I,\alpha,\lambda,\rho)$ be a monoidal category. Let $\mathsf{S}:=(S,\mu)$ be a semigroup in $\mathbb{C}$. Let $c\in \mathit{Ob} C$, and $\gamma\colon S\otimes c\to c$ (resp. $\rho\colon c\otimes S\to c$) be a $C$-morphism. $(S,\gamma)$ (resp. $(S,\rho)$) is called a {\em left $\mathsf{S}$-act} (resp. {\em right $\mathsf{S}$-act}) when the  diagram below on the left (resp. right) commutes.
\begin{equation}
\xymatrix@R=1em{
\eq[d]S\otimes(S\otimes c)\ar[r]^-{id_S\otimes \gamma} & S\otimes c\ar[d]^{\gamma}&&\eq[d](c\otimes S)\otimes S\ar[r]^-{\rho\otimes id_S} & c\otimes S\ar[d]^{\rho}\\
(S\otimes S)\otimes c\ar[d]_{\mu\otimes id_c}&c&&c\otimes(S\otimes S)\ar[d]_{id_c\otimes\mu}&c\\
S\otimes c\ar[ru]_{\gamma}&&&c\otimes S\ar[ru]_{\rho}&
}
\end{equation}
$\gamma$ (resp. $\rho$) then is referred to as the {\em left} (resp. {\em right})  {\em $\mathsf{S}$-action}. 

\begin{example}
By associativity, $(S,\mu)$  is both a left and a right $\mathsf{S}$-act. 
\end{example}

Given left (resp. right) $\mathsf{S}$-acts $(c,\gamma),(c',\gamma')$ (resp. $(c,\rho),(c',\rho')$) a $C$-morphism $c\xrightarrow{f}c'$ is said to be a {\em left} (resp. {\em right}) {\em $\mathsf{S}$-morphism} when the  diagram below on the left (resp. right) commutes.
\begin{equation}
\xymatrix@R=1em{
c\ar[r]^{f} & c'&&c\ar[r]^{f} & c'\\
\ar[u]^{\gamma}S\otimes c\ar[r]_{id_S\otimes f}&S\otimes c'\ar[u]_{\gamma'}&&\ar[u]^{\rho}c\otimes  S\ar[r]_{ f\otimes id_S}&c'\otimes S\ar[u]_{\rho'}
}
\end{equation}
This provides the categories ${}_{\mathsf{S}}\mathbf{Act}(\mathbb{C})$ and $\mathbf{Act}_{\mathsf{S}}(\mathbb{C})$ of left and right $\mathsf{S}$-acts respectively, together with obvious faithful forgetful functors ${}_{\mathsf{S}}|-|\colon {}_{\mathsf{S}}\mathbf{Act}(\mathbb{C})\to C$ and $|-|_{\mathsf{S}}\colon \mathbf{Act}_{\mathsf{S}}(\mathbb{C})\to C$. 

Let $\mathsf{S}=(S,\mu)$ and $\mathsf{S}'=(S',\mu')$ be semigroups in $\mathbb{C}$. Given an object $c$ together with a left $\mathsf{S}$-action $\gamma$ and a right $\mathsf{S}'$-action $\rho$, $(c,\gamma,\rho)$ is said to be  a {\em $\mathsf{S}\mhyphen \mathsf{S}'$-biact} when the following diagram commutes.
\begin{equation}
\xymatrix@R=1em{
\eq[d](S\otimes c)\otimes S' \ar[r]^-{\gamma\otimes id_{S'}}& c\otimes S'\ar[d]^{\rho}\\
S\otimes(c\otimes S') \ar[d]_{id_S\otimes\rho}&c\\
S\otimes c\ar[ru]_{\gamma}
}
\end{equation}
When $\mathsf{S}=\mathsf{S}$, a $\mathsf{S}\mhyphen\mathsf{S}$-biact is simply called a {\em two-sided $\mathsf{S}$-biact}. 

\begin{example}
$(S,\mu,\mu)$ is a $\mathsf{S}$-biact.
\end{example}

Given $\mathsf{S}\mhyphen \mathsf{S}'$-biacts  $(c,\gamma,\rho),(c',\gamma',\rho')$, a $C$-morphism $c\xrightarrow{f}c'$ which is both a left $\mathsf{S}$-morphism $(c,\gamma)\xrightarrow{f}(c',\gamma')$ and a right $\mathsf{S}'$-morphism $(c,\rho)\xrightarrow{f}(c',\rho')$ is referred to as a {\em $\mathsf{S}\mhyphen\mathsf{S}'$-morphism}, and a {\em two-sided $\mathsf{S}$-morphism} when $\mathsf{S}=\mathsf{S}'$. This provides the category ${}_{\mathsf{S}}\mathbf{Act}_{\mathsf{S}'}(\mathbb{C})$ together with a faithful forgetful functor ${}_{\mathsf{S}}|-|_{\mathsf{S}'}\colon {}_{\mathsf{S}}\mathbf{Act}_{\mathsf{S}'}(\mathbb{C})\to C$. 

One easily obtains the following result.
\begin{lemma}\label{lem:left_act_is_right_act_for_transpose}
Let $\mathsf{S}=(S,\mu)$ be a semigroup in $\mathbb{C}$. Then, $\mathbf{Act}_{\mathsf{S}}(\mathbb{C})={}_{\mathsf{S}}\mathbf{Act}(\mathbb{C}^t)$ and ${}_{\mathsf{S}}\mathbf{Act}(\mathbb{C})=\mathbf{Act}_{\mathsf{S}}(\mathbb{C}^t)$. 
\end{lemma}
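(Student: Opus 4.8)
The plan is to show that the two categories are equal \emph{on the nose} --- same objects, same morphisms, same composition --- rather than merely equivalent, so the proof is really a matter of unwinding definitions and matching the coherence data. I would prove the first equality $\mathbf{Act}_{\mathsf{S}}(\mathbb{C})={}_{\mathsf{S}}\mathbf{Act}(\mathbb{C}^t)$ carefully, and then deduce the second by applying the first to $\mathbb{C}^t$ in place of $\mathbb{C}$, using the involutivity $(\mathbb{C}^t)^t=\mathbb{C}$. The latter is checked directly from the definition of the transpose: $\otimes^{tt}=\otimes$ since $(a\otimes^t b)=b\otimes a$, and similarly $\alpha^{tt}=\alpha$, $\lambda^{tt}=\lambda$, $\rho^{tt}=\rho$ (e.g. $\alpha^t_{c,b,a}=\alpha^{-1}_{a,b,c}$ gives $(\alpha^t)^t_{a,b,c}=(\alpha^t)^{-1}_{c,b,a}=\alpha_{a,b,c}$). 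Throughout, Lemma~\ref{lem:sem_in_C_is_the_same_sem_in_transpose} is what allows us to speak of $\mathbf{Act}_{\mathsf{S}}(\mathbb{C}^t)$ and ${}_{\mathsf{S}}\mathbf{Act}(\mathbb{C}^t)$ at all, since it tells us that $\mathsf{S}=(S,\mu)$ is a semigroup in $\mathbb{C}^t$ with the very same structure morphism $\mu\colon S\otimes^t S=S\otimes S\to S$.

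First I would compare objects. A right $\mathsf{S}$-act in $\mathbb{C}$ is a pair $(c,\rho)$ with $\rho\colon c\otimes S\to c$, while a left $\mathsf{S}$-act in $\mathbb{C}^t$ is a pair $(c,\gamma)$ with $\gamma\colon S\otimes^t c\to c$; since $S\otimes^t c=c\otimes S$ by definition of $\otimes^t$, these are the same datum, and I set $\gamma=\rho$. Next I would rewrite the defining diagram of a left $\mathsf{S}$-act in $\mathbb{C}^t$ using the three identities that follow from $f\otimes^t f'=f'\otimes f$, namely $id_S\otimes^t\gamma=\gamma\otimes id_S$ and $\mu\otimes^t id_c=id_c\otimes\mu$, together with $S\otimes^t(S\otimes^t c)=(c\otimes S)\otimes S$ and $(S\otimes^t S)\otimes^t c=c\otimes(S\otimes S)$. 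The associativity constraint joining these two objects in $\mathbb{C}^t$ is $\alpha^t_{S,S,c}=\alpha^{-1}_{c,S,S}$, which is exactly the inverse of the constraint $\alpha_{c,S,S}$ appearing in the right $\mathsf{S}$-act axiom of $\mathbb{C}$. Substituting all of this, the left-act axiom in $\mathbb{C}^t$ becomes, letter for letter, the right-act axiom in $\mathbb{C}$, so $(c,\gamma)$ is a left $\mathsf{S}$-act in $\mathbb{C}^t$ iff $(c,\rho)$ is a right $\mathsf{S}$-act in $\mathbb{C}$.

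For morphisms the same substitution works: a left $\mathsf{S}$-morphism $f\colon(c,\gamma)\to(c',\gamma')$ in $\mathbb{C}^t$ satisfies $\gamma'\circ(id_S\otimes^t f)=f\circ\gamma$, and $id_S\otimes^t f=f\otimes id_S$ turns this into $\gamma'\circ(f\otimes id_S)=f\circ\gamma$, which is precisely the right $\mathsf{S}$-morphism condition in $\mathbb{C}$. Since the underlying category of $\mathbb{C}^t$ is $C$ itself and composition in both $\mathbf{Act}_{\mathsf{S}}(\mathbb{C})$ and ${}_{\mathsf{S}}\mathbf{Act}(\mathbb{C}^t)$ is inherited from $C$ (both forgetful functors to $C$ being faithful), the two categories have identical objects, identical hom-sets, and identical composition, hence are equal. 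Applying this equality to $\mathbb{C}^t$ and invoking $(\mathbb{C}^t)^t=\mathbb{C}$ yields $\mathbf{Act}_{\mathsf{S}}(\mathbb{C}^t)={}_{\mathsf{S}}\mathbf{Act}(\mathbb{C})$, the second assertion. The only genuinely delicate point, and the step I would write out most explicitly, is the bookkeeping of the transposed associator: one must confirm that the \emph{direction} of $\alpha^t_{S,S,c}$ is the inverse of $\alpha_{c,S,S}$, so that reversing which leg of the triangle carries the associator leaves the commuting condition unchanged and the two axioms coincide exactly.
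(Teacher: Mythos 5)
The paper states this lemma without proof (``One easily obtains the following result''), and your argument is exactly the direct verification it has in mind: unwinding $\otimes^t$, $\alpha^t$ on objects, morphisms, and the act axioms to see that the two categories coincide on the nose, then obtaining the second equality from the first via $(\mathbb{C}^t)^t=\mathbb{C}$. Your proposal is correct, including the careful handling of the direction of the transposed associator.
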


\begin{example}
\begin{enumerate}
\item With $\mathsf{S}$ an ordinary semigroup, ${}_{\mathsf{S}}\mathbf{Act}(\mathbb{Set})$ is the usual category of left $\mathsf{S}$-acts. 

\item Let $R$ be a commutative ring with a unit. Let ${}_R\mathbb{Mod}$ be the monoidal category of left (unital) $R$-modules, with the usual tensor product $\otimes_R$ over $R$. Associative $R$-algebras and semigroup objects in ${}_R\mathbb{Mod}$ are essentially the same and so are left modules over an algebra and left acts over a semigroup object. 

\item Let $\mathbb{Ban}$ be the monoidal category with underlying category $\mathbf{Ban}$, the category of say complex Banach spaces with bounded linear maps, and with the projective tensor product $\hat{\otimes}$ (see~\cite{Ryan}). A semigroup $\mathsf{B}=(B,\mu)$ in $\mathbb{Ban}$ is essentially\footnote{Up to the change of the original norm by an equivalent sub-multiplicative norm~\cite{Poinsot}.} a non-unital Banach algebra $(B,*)$ with $x*y:=\mu(x\otimes y)$, $x,y\in B$. Left $\mathsf{B}$-acts are left Banach $(B,*)$-modules. 


\end{enumerate}
\end{example}


Let $(\mathbb{C},\sigma)$ be a symmetric monoidal category. Let $S,c,d$ be $C$-objects. Let $f\colon S\otimes c\to d$ (resp. $g\colon c\otimes S\to d$) be a $C$-morphism. One defines $f':= c\otimes S\xrightarrow{\sigma_{c,S}}S\otimes c\xrightarrow{f}d$ (resp. ${}^\backprime g\colon S\otimes c\xrightarrow{\sigma_{S,c}}c\otimes S\xrightarrow{g}c$). Of course, ${}^\backprime (f')=f$ and $({}^\backprime g)'=g$. 
\begin{proposition}\label{prop:com_mon}
Let $\mathsf{S}=(S,\mu)$ be a commutative semigroup in $\mathbb{C}$. There are functors $A_l\colon {}_{\mathsf{S}}\mathbf{Act}(\mathbb{C})\to \mathbf{Act}_{\mathsf{S}}(\mathbb{C})$, with $|-|_{\mathsf{S}}\circ A_l={}_{\mathsf{S}}|-|$ and $A_r\colon \mathbf{Act}_{\mathsf{S}}(\mathbb{C})\to {}_{\mathsf{S}}\mathbf{Act}(\mathbb{C})$ with ${}_{\mathsf{S}}|-|\circ A_r=|-|_{\mathsf{S}}$, which are inverse one from the other. There are full embedding functors $\mathbf{Act}_{\mathsf{S}}(\mathbb{C})\xrightarrow{B_l}{}_{\mathsf{S}}\mathbf{Act}_{\mathsf{S}}(\mathbb{C})\xleftarrow{B_r}{}_{\mathsf{S}}\mathbf{Act}(\mathbb{C})$ such that ${}_{\mathsf{S}}|-|_{\mathsf{S}}\circ B_l={}_{\mathsf{S}}|-|$ and ${}_{\mathsf{S}}|-|_{\mathsf{S}}\circ B_r=|-|_{\mathsf{S}}$, and the following diagram commutes.
\begin{equation}
\xymatrix@R=1em{
{}_{\mathsf{S}}\mathbf{Act}(\mathbb{C})\ar[rr]^{A_l}\ar[rd]_{B_l} && \mathbf{Act}_{\mathsf{S}}(\mathbb{C})\ar[ld]^{B_r}\\
&{}_{\mathsf{S}}\mathbf{Act}_{\mathsf{S}}(\mathbb{C})&
}
\end{equation}
\end{proposition}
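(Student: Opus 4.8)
The plan is to transport actions across the symmetry $\sigma$, exactly as the pre-statement notation $f\mapsto f'$ and $g\mapsto {}^\backprime g$ is designed for. On objects I would set $A_l(c,\gamma):=(c,\gamma')$ with $\gamma'=\gamma\circ\sigma_{c,S}$ and $A_r(c,\rho):=(c,{}^\backprime\rho)$ with ${}^\backprime\rho=\rho\circ\sigma_{S,c}$, and let both functors act as the identity on the underlying $C$-morphism of an act morphism. The stated compatibilities $|-|_{\mathsf{S}}\circ A_l={}_{\mathsf{S}}|-|$ and ${}_{\mathsf{S}}|-|\circ A_r=|-|_{\mathsf{S}}$ are then immediate, since neither the underlying object nor the underlying arrow is altered. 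The identities ${}^\backprime(f')=f$ and $({}^\backprime g)'=g$ recorded just before the statement give $A_r\circ A_l=id$ and $A_l\circ A_r=id$ on objects, and on morphisms both composites are the identity on the underlying arrow, so $A_l$ and $A_r$ are mutually inverse isomorphisms of categories.

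The substantive point is well-definedness of $A_l$: that $\gamma'$ satisfies the right-act axiom $\gamma'\circ(\gamma'\otimes id_S)=\gamma'\circ(id_c\otimes\mu)\circ\alpha_{c,S,S}$. I would verify this by substituting $\gamma'=\gamma\circ\sigma_{c,S}$ into both sides and rewriting with three ingredients: naturality of $\sigma$ (to slide the symmetries past $\gamma\otimes id_S$ and $id_c\otimes\mu$), the hexagon coherence relating $\sigma$ and $\alpha$, and the commutativity hypothesis $\mu=\mu\circ\sigma_{S,S}$. After these rewrites both sides become $\gamma\circ(\mu\otimes id_c)$ precomposed with coherence isomorphisms, the two differing only by the twist $\sigma_{S,S}$ on the pair of $S$-factors: the left-act law for $(c,\gamma)$ collapses the iterated action into $\mu\otimes id_c$, and $\mu=\mu\circ\sigma_{S,S}$ then identifies the two orders. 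This is precisely where commutativity of $\mathsf{S}$ is indispensable, and this coherence bookkeeping — selecting the correct associators and the right instance of the hexagon — is the step I expect to be the main obstacle. That $A_l$ carries a left $\mathsf{S}$-morphism to a right $\mathsf{S}$-morphism is the easy half: if $f\circ\gamma=\theta\circ(id_S\otimes f)$ for $f\colon(c,\gamma)\to(d,\theta)$, then naturality of $\sigma$ gives $\sigma_{d,S}\circ(f\otimes id_S)=(id_S\otimes f)\circ\sigma_{c,S}$, whence $f\circ(\gamma\circ\sigma_{c,S})=(\theta\circ\sigma_{d,S})\circ(f\otimes id_S)$.

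For the biacts I would define the embedding out of right acts by $(c,\rho)\mapsto(c,{}^\backprime\rho,\rho)$ and the embedding out of left acts by $(c,\gamma)\mapsto(c,\gamma,\gamma')$, again the identity on morphisms, which yields the compatibilities with ${}_{\mathsf{S}}|-|_{\mathsf{S}}$ for free. Well-definedness requires checking the single biact compatibility square; since both the left and the right action are built from one and the same $\gamma$ (resp. $\rho$) through $\sigma$, this square unwinds by the same ingredients as above (naturality of $\sigma$, the hexagon, the left-act law, and $\mu=\mu\circ\sigma_{S,S}$) to an identity already established, so no new idea is needed. Fullness and faithfulness are then a formality: a biact morphism between two images is by definition simultaneously a left and a right $\mathsf{S}$-morphism, but for the transported structures one of these conditions follows from the other by naturality of $\sigma$, so the biact morphisms coincide exactly with the morphisms of the source act category; injectivity on objects is clear because the underlying object and the retained action are preserved. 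Finally the triangle of the statement commutes because the right-act embedding applied after $A_l$ sends $(c,\gamma)$ to $(c,{}^\backprime(\gamma'),\gamma')=(c,\gamma,\gamma')$, using ${}^\backprime(\gamma')=\gamma$, which is exactly the image of $(c,\gamma)$ under the left-act embedding; on morphisms all three functors are the identity on underlying arrows, so commutativity is immediate.
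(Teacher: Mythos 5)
Your proposal is correct and follows essentially the same route as the paper: transport the action along the symmetry $\sigma$, verify the transported act and biact axioms using naturality of $\sigma$, the hexagon coherence, the original act law and $\mu=\mu\circ\sigma_{S,S}$, and take all functors to be the identity on underlying arrows. The paper carries out the coherence bookkeeping by explicit pasted diagrams and leaves the inverse, fullness and symmetry claims implicit, whereas you spell those out; there is no substantive difference.
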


\begin{proof}
 Let $(c,\gamma)$ be a left $\mathsf{S}$-act. $(c,\gamma')$ is a right $\mathsf{S}$-act since each cell of the following diagram commute, so commutes the surrounding diagram.
\begin{equation}
\xymatrix@R=1em@C=1em{
\eq[dd](c\otimes S)\otimes S\ar[rr]^{\sigma_{c,S}\otimes id_S} &&\eq[ld]\ar[rd]_{\sigma_{S\otimes c,S}}(S\otimes c)\otimes S\ar[rr]^{\gamma\otimes id_S}&&c\otimes S\ar[d]^{\sigma_{c,S}}\\
&S\otimes(c\otimes S)\ar[d]_{id_S\otimes\sigma_{c,S}} &&\eq[d]S\otimes(S\otimes c)\ar[r]_{id_S\otimes\gamma}&S\otimes c\ar[dd]^{\gamma}\\
\ar@/_3pc/[rrdd]_{id_c\otimes\mu}\ar[rd]_{\sigma_{c,S\otimes S}}c\otimes(S\otimes S)&\eq[d]S\otimes (S\otimes c) \eq[rrd]&&(S\otimes S)\otimes c\ar@/^2.5pc/[dd]^{\mu\otimes id_c}&&\\
&(S\otimes S)\otimes c\ar@{=}[rr]&&\ar[d]_{\mu\otimes id_c}(S\otimes S)\otimes c\ar[u]^{\sigma_{S,S}\otimes id_c}&c\\
&&c\otimes S\ar[r]_{\sigma_{c,S}}&S\otimes c\ar[ru]_{\gamma}&
}
\end{equation}
$(c,\gamma,\gamma')$ is a two-sided $\mathsf{S}$-act. Indeed all internal cells of the following diagram commute, so commutes the surrounding diagram. 
\begin{equation}
\xymatrix@R=1em{
\ar[ddd]_{id_S\otimes\sigma_{c,S}}S\otimes (c\otimes S)\eq[r]&(S\otimes c)\otimes S\ar[d]_{\sigma_{S\otimes c,S}}\ar[r]^{\gamma\otimes id_S} & c\otimes S\ar[rdd]^{\sigma_{c,S}} & \\
&S\otimes(S\otimes c)\eq[d] \ar[rrd]^{id_S\otimes\gamma}&&\\
&(S\otimes S)\otimes c\ar[r]_{\mu\otimes id_c}&S\otimes c\ar[rdd]_{\gamma}& S\otimes c\ar[dd]^{\gamma}\\
\ar[d]_{id_S\otimes\gamma}S\otimes(S\otimes c)\eq[r]&(S\otimes S)\otimes c\ar[u]^{\sigma_{S,S}\otimes id_c}\ar[ru]_{\mu\otimes id_c}&&\\
S\otimes c\ar[rrr]_{\gamma} &&&c
}
\end{equation}
Let $f\in {}_{\mathsf{S}}Act(\mathbb{C})((c_1,\gamma_1),(c_2,\gamma_2))$. Then, $f\in Act_{\mathsf{S}}(\mathbb{C})((c_1,\gamma_1'),(c_2,\gamma_2'))$, and thus $f\in {}_{\mathsf{S}}Act_{\mathsf{S}}(\mathbb{C})((c_1,\gamma_1,\gamma_1'),(c_2,\gamma_2,\gamma_2'))$. 
Indeed, the two cells of the following diagram commute so commutes the surrounding diagram.
\begin{equation}
\xymatrix@R=1em{
 c_1\ar[r]^{f}& c_2\\
\ar[u]^{\gamma_1}S\otimes c_1 \ar[r]_{id_S\otimes f}& S\otimes c_2\ar[u]_{\gamma_2}\\
\ar[u]^{\sigma_{c_1,S}}\ar[r]_{f\otimes id_S}c_1\otimes S & c_2\otimes S\ar[u]_{\sigma_{c_2,S}}
}
\end{equation}
Consequently one may define $A_l((c_1,\gamma_1)\xrightarrow{f}(c_2,\gamma_2)):=(c_1,\gamma_1')\xrightarrow{f}(c_2,\gamma_2')$ and $B_l((c_1,\gamma_1)\xrightarrow{f}(c_2,\gamma_2)):=(c_1,\gamma_1,\gamma_1')\xrightarrow{f}(c_2,\gamma_2,\gamma_2')$. 

That $A_r((c_1,\rho_1)\xrightarrow{f}(c_2,\rho_2)):=(c_1,{}^{\backprime}\rho_1)\xrightarrow{f}(c_2,{}^{\backprime}\rho_2)$ provides a functor from $Act_{\mathsf{S}}(\mathbb{C})$ to ${}_{\mathsf{S}}Act(\mathbb{C})$ and $B_r((c_1,\rho_1)\xrightarrow{f}(c_2,\rho_2)):=(c_1,{}^{\backprime}\rho_1,\rho_1)\xrightarrow{f}(c_2,{}^{\backprime}\rho_2,\rho_2)$ provides a functor from $Act_{\mathsf{S}}(\mathbb{C})$ to ${}_{\mathsf{S}}Act(\mathbb{C})_{\mathsf{S}}$ is obtained by symmetry. 
\end{proof}
\section{The multiplier monoid of a semigroup object}\label{sec:mult_mon}

In this section is introduced the multiplier monoid of a semigroup object together with the inner multipliers, induced by generalized elements. By dualization the comultiplier monoid of a cosemigroup object is freely provided. 

Throughout this section $\mathbb{C}=(C,\otimes,I)$ stands for a monoidal category. 
\subsection{The construction}

\begin{proposition}\label{prop:tens_prod_of_right_and_left_act}
Let $\mathsf{S}=(S,\mu)$ and $\mathsf{S}'=(S',\mu')$ be  semigroups in $\mathbb{C}$. 
 $\otimes\colon C\times C\to C$ lifts to   a functor ${}_{\mathsf{S}}\mathbf{Act}(\mathbb{C})\times \mathbf{Act}_{\mathsf{S}'}(\mathbb{C})\to {}_{\mathsf{S}}\mathbf{Act}_{\mathsf{S}'}(\mathbb{C})$, still denoted $\otimes$, that is, such that  the following diagram commutes. 
\begin{equation}
\xymatrix@R=1em{
\ar[d]_{{}_{\mathsf{S}}|-|\times |-|_{\mathsf{S}'}}{}_{\mathsf{S}}\mathbf{Act}(\mathbb{C})\times \mathbf{Act}_{\mathsf{S}'}(\mathbb{C})\ar[r]&{}_{\mathsf{S}}\mathbf{Act}_{\mathsf{S}'}(\mathbb{C})\ar[d]^{{}_{\mathsf{S}}|-|_{\mathsf{S}'}}\\
C\times C \ar[r]_{\otimes}& C
}
\end{equation}
\end{proposition}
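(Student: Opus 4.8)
The plan is to equip the object $c\otimes d$ with an $\mathsf{S}\mhyphen\mathsf{S}'$-biact structure whenever $(c,\gamma)$ is a left $\mathsf{S}$-act and $(d,\rho)$ is a right $\mathsf{S}'$-act, by letting $\mathsf{S}$ act on the left factor and $\mathsf{S}'$ on the right factor. Concretely, I would define the left action
$$\Gamma:=\bigl(S\otimes(c\otimes d)\xrightarrow{\alpha_{S,c,d}^{-1}}(S\otimes c)\otimes d\xrightarrow{\gamma\otimes id_d}c\otimes d\bigr)$$
and the right action
$$P:=\bigl((c\otimes d)\otimes S'\xrightarrow{\alpha_{c,d,S'}}c\otimes(d\otimes S')\xrightarrow{id_c\otimes\rho}c\otimes d\bigr).$$
On morphisms the assignment is forced: a left $\mathsf{S}$-morphism $f\colon(c_1,\gamma_1)\to(c_2,\gamma_2)$ and a right $\mathsf{S}'$-morphism $g\colon(d_1,\rho_1)\to(d_2,\rho_2)$ are sent to $f\otimes g\colon c_1\otimes d_1\to c_2\otimes d_2$, and I will verify this lands in ${}_{\mathsf{S}}\mathbf{Act}_{\mathsf{S}'}(\mathbb{C})$.

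The substance of the proof is checking the three defining diagrams. First, that $(c\otimes d,\Gamma)$ is a left $\mathsf{S}$-act: since $\Gamma$ touches only the $c$-factor, the left-act pentagon reduces to the left-act axiom for $\gamma$, the associativity constraint on $S,S,c$ being imported along the $d$-factor; the bookkeeping of the various instances of $\alpha$ is handled by naturality of $\alpha$ together with the pentagon identity. Symmetrically, $(c\otimes d,P)$ is a right $\mathsf{S}'$-act from the right-act axiom for $\rho$. The one genuinely new condition is the biact compatibility, namely that
$$P\circ(\Gamma\otimes id_{S'})=\Gamma\circ(id_S\otimes P)\circ\alpha_{S,c\otimes d,S'}.$$
Here the point is that $\gamma\otimes id_d$ and $id_c\otimes\rho$ act on disjoint tensor factors, so their composites commute by the interchange law (bifunctoriality of $\otimes$), and the remaining discrepancy is a rearrangement of five-fold associators that collapses via the pentagon axiom and naturality of $\alpha$. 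I expect this biact square to be the main obstacle, precisely because it is the only diagram mixing both actions and hence the only one where the coherence isomorphisms must be threaded carefully rather than inherited directly from $\gamma$ or $\rho$.

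For the morphism part, that $f\otimes g$ is a left $\mathsf{S}$-morphism and a right $\mathsf{S}'$-morphism follows from naturality of $\alpha$ (to commute the associators past $f\otimes g$) and functoriality of $\otimes$ applied to the morphism squares defining $f$ and $g$. Functoriality of the lifted $\otimes$, i.e.\ preservation of identities and of composition, is then immediate from functoriality of $\otimes\colon C\times C\to C$, since the structure maps $\Gamma,P$ are built from $\alpha$, $\gamma$, $\rho$ and the images under the forgetful functors. Finally, the commuting square with the forgetful functors holds on the nose: the underlying object of $(c\otimes d,\Gamma,P)$ is $c\otimes d={}_{\mathsf{S}}|(c,\gamma)|\otimes|(d,\rho)|_{\mathsf{S}'}$, so ${}_{\mathsf{S}}|-|_{\mathsf{S}'}\circ\otimes=\otimes\circ({}_{\mathsf{S}}|-|\times|-|_{\mathsf{S}'})$. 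A clean way to organize all the associator chases, if one prefers, is to invoke Mac Lane coherence to reduce to the strict case, where $\Gamma$ and $P$ become $\gamma\otimes id_d$ and $id_c\otimes\rho$ and all three diagrams reduce to the axioms for $\gamma$, $\rho$ and the interchange law.
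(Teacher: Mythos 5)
Your proposal is correct and follows essentially the same route as the paper: the paper defines exactly the structure maps $\gamma_{\rho'}=(\gamma\otimes id_{c'})\circ\alpha^{-1}_{S,c,c'}$ and ${}_{\gamma}\rho'=(id_c\otimes\rho')\circ\alpha_{c,c',S'}$, verifies the left-act, right-act and biact diagrams by the same naturality-plus-pentagon chases you describe, sends $(f,f')$ to $f\otimes f'$, and derives functoriality from that of $\otimes$. The only difference is presentational: the paper writes out the explicit commutative diagrams rather than appealing to Mac Lane coherence to reduce to the strict case.
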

\begin{proof}
Let $(c,\gamma)$ and $(c',\rho')$ be respectively a left  $\mathsf{S}$-act and a right $\mathsf{S}'$-act. Let $\gamma_{\rho'}:=S\otimes (c\otimes c')\simeq (S\otimes c)\otimes c'\xrightarrow{\gamma\otimes id_{c'}}c\otimes c'$, and let ${}_{\gamma}\rho':=(c\otimes c')\otimes S'\simeq c\otimes(c'\otimes S')\xrightarrow{id_c\otimes \rho'}c\otimes c'$. Then, one claims that $(c\otimes c',\gamma_{\rho'},{}_{\gamma}\rho')$ is a  two-sided $\mathsf{S}\mhyphen \mathsf{S}'$-biact. First, $(c\otimes c',\gamma_{\rho'})$ is a left $\mathsf{S}$-act because all the cells of the following diagram commute. 
\begin{equation}
\xymatrix{
\ar[d]_{\alpha^{-1}_{S,S,c\otimes c'}}S\otimes(S\otimes(c\otimes c'))\ar[r]^{id_S\otimes\alpha^{-1}_{S,c,c'}} &\ar[d]_{\alpha^{-1}_{S,S\otimes c,c'}}S\otimes((S\otimes c)\otimes c')\ar[r]^{id_S\otimes(\gamma\otimes id_{c'}}&S\otimes(c\otimes c')\ar[d]^{\alpha^{-1}_{S,c,c'}}\\
\ar[d]_{\mu\otimes(id_c\otimes id_{c'})}(S\otimes S)\otimes(c\otimes c')\ar[rd]_{\alpha^{-1}_{S\otimes S,c,c'}}&(S\otimes (S\otimes c))\otimes c'\ar[d]_{\alpha^{-1}_{S,S,c}\otimes id_{c'}}\ar[r]^{(id_S\otimes \gamma)\otimes id_{c'}}&(S\otimes c)\otimes c'\ar[d]^{\gamma\otimes id_{c'}}\\
\ar[rd]_{\alpha^{-1}_{S,c,c'}}S\otimes(c\otimes c')&((S\otimes S)\otimes c)\otimes c'\ar[d]_{(\mu\otimes id_c)\otimes id_{c'}}&c\otimes c'\\
&(S\otimes c)\otimes c'\ar[ru]_{\gamma\otimes id_{c'}}&
}
\end{equation}
Secondly, that $(c\otimes c',{}_{\gamma}\rho')$ is a right $\mathsf{S}'$-act is obtained similarly. Finally, $(c\otimes c',\gamma_{\rho'},{}_{\gamma}\rho')$ is a $\mathsf{S}\mhyphen \mathsf{S}'$-biact because all internal cells of the following diagrams commute, so does the surrounding diagram. 
\begin{equation}
\xymatrix@R=1.5em{
(S\otimes (c\otimes c'))\otimes S'\ar[d]_{\alpha_{S,c\otimes c',S'}}\ar[r]^{\alpha^{-1}_{S,c,c'}\otimes id_{S'}} & ((S\otimes c)\otimes c')\otimes S'\ar[r]^{(\gamma\otimes id_{c'})\otimes id_{S'}}\ar[d]_{\alpha_{S\otimes c,c',S'}}&(c\otimes c')\otimes S'\ar[d]^{\alpha_{c,c',S'}}\\
S\otimes ((c\otimes c')\otimes S')\ar[d]_{id_S\otimes\alpha_{c,c',S'}}&(S\otimes c)\otimes (c'\otimes S')\ar[r]^{\gamma\otimes (id_{c'}\otimes id_{S'})}\ar[dd]_{(id_S\otimes id_c)\otimes\rho'}&c\otimes (c'\otimes S')\ar[d]^{id_{c}\otimes\rho'}\\
S\otimes(c\otimes (c'\otimes S'))\ar[d]_{id_S\otimes(id_c\otimes\rho')}\ar[ru]^{\alpha^{-1}_{S,c,c'\otimes S'}}&&c\otimes c'\\
S\otimes(c\otimes c')\ar[r]_{\alpha^{-1}_{S,c,c'}}&(S\otimes c)\otimes c'\ar[ru]_{\gamma\otimes id_{c'}}&
}
\end{equation}

Let $f\in {}_{\mathsf{S}}\mathbf{Act}((c,\gamma_c),(d,\gamma_d))$ and let $f'\in \mathbf{Act}_{\mathsf{S}'}((c',\rho'_{c'}),(d',\rho'_{d'}))$. Then, $f\otimes f'\in {}_{\mathsf{S}}\mathbf{Act}_{\mathsf{S}'}((c\otimes c',(\gamma_c)_{\rho'_{c'}},{}_{\gamma_c}(\rho'_{c'})),(d\otimes d',(\gamma_d)_{\rho'_{d'}},{}_{\gamma_d}(\rho'_{d'})))$ because the internal cells of both following diagrams commute.
\begin{equation}
\xymatrix@R=1em{
c\otimes c' \ar[r]^{f\otimes f'}&d\otimes d'&&c\otimes c'\ar[r]^{f\otimes f'}&d\otimes d'\\
\ar[u]^{\gamma_c\otimes id_{c'}}(S\otimes c)\otimes c'\ar[r]_{(id_S\otimes f)\otimes f'}&\ar[u]_{\gamma_d\otimes id_{d'}}(S\otimes d)\otimes d'&&\ar[u]^{id_c\otimes \rho'_{c'}}c\otimes(c'\otimes S')\ar[r]_{f\otimes (f'\otimes id_{S'})}&d\otimes (d'\otimes S')\ar[u]_{id_d\otimes\rho'_{d'}}\\
\ar[u]^{\alpha^{-1}_{S,c,c'}}S\otimes (c\otimes c')\ar[r]_{id_S\otimes (f\otimes f')}&\ar[u]_{\alpha_{S,d,d'}}S\otimes(d\otimes d')&&\ar[u]^{\alpha_{c,c',S'}}\ar[r]_{(f\otimes f')\otimes id_{S'}}(c\otimes c')\otimes S'&(d\otimes d')\otimes S'\ar[u]_{\alpha_{d,d',S'}}
}
\end{equation}
Functoriality of the construction follows from that of $\otimes$.
\end{proof}

As a consequence of the associativity condition is the 
\begin{lemma}\label{lem:mu_is_bi_act_morphism}
Let $\mathsf{S}=(S,\mu)$ be a semigroup in $\mathbb{C}$. Then, $\mu\in {}_{\mathsf{S}}\mathbf{Act}_{\mathsf{S}}(\mathbb{C})((S,\mu)\otimes(S,\mu),(S,\mu,\mu))$.
\end{lemma}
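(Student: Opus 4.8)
The plan is to unwind the definition of a two-sided $\mathsf{S}$-morphism and to observe that each of the two commutativities it demands is exactly the associativity axiom of $\mu$, one stated directly and one after composing with the associator. First I would pin down the two biact structures involved. Applying Proposition~\ref{prop:tens_prod_of_right_and_left_act} to the left $\mathsf{S}$-act $(S,\mu)$ and the right $\mathsf{S}$-act $(S,\mu)$, the object $(S,\mu)\otimes(S,\mu)$ is the two-sided $\mathsf{S}$-biact $(S\otimes S,\mu_{\mu},{}_{\mu}\mu)$ with left action $\mu_{\mu}=(\mu\otimes id_S)\circ\alpha^{-1}_{S,S,S}$ and right action ${}_{\mu}\mu=(id_S\otimes\mu)\circ\alpha_{S,S,S}$, while $(S,\mu,\mu)$ is a two-sided $\mathsf{S}$-biact by the example following the definition of a biact. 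It then remains to check that $\mu\colon S\otimes S\to S$ is simultaneously a left and a right $\mathsf{S}$-morphism between these two biacts.

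For the left $\mathsf{S}$-morphism condition I would write the required square, which asks that $\mu\circ\mu_{\mu}=\mu\circ(id_S\otimes\mu)$ as morphisms $S\otimes(S\otimes S)\to S$. Substituting the formula for $\mu_{\mu}$ turns this into $\mu\circ(\mu\otimes id_S)\circ\alpha^{-1}_{S,S,S}=\mu\circ(id_S\otimes\mu)$, which is the associativity diagram of $\mathsf{S}$ with $\alpha_{S,S,S}$ transported to the other side. Symmetrically, the right $\mathsf{S}$-morphism condition reads $\mu\circ{}_{\mu}\mu=\mu\circ(\mu\otimes id_S)$ as morphisms $(S\otimes S)\otimes S\to S$, and substituting ${}_{\mu}\mu$ yields $\mu\circ(id_S\otimes\mu)\circ\alpha_{S,S,S}=\mu\circ(\mu\otimes id_S)$, which is precisely the associativity diagram in its original form. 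Having verified both squares, $\mu$ lies in ${}_{\mathsf{S}}\mathbf{Act}_{\mathsf{S}}(\mathbb{C})((S,\mu)\otimes(S,\mu),(S,\mu,\mu))$ as claimed.

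There is essentially no obstacle here: both morphism conditions coincide with the defining associativity axiom of the semigroup object, one stated verbatim and the other after pre-composing with the invertible associator. The only point requiring care is the bookkeeping, namely reading off the induced actions $\mu_{\mu}$ and ${}_{\mu}\mu$ correctly from Proposition~\ref{prop:tens_prod_of_right_and_left_act} (in particular keeping track of $\alpha_{S,S,S}$ versus $\alpha^{-1}_{S,S,S}$) and matching them against the two squares of the biact-morphism definition. Once these identifications are made explicit, the conclusion is immediate from associativity, so the proof can be kept to a couple of lines.
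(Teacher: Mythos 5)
Your proof is correct and matches the paper's (omitted) argument: the paper states the lemma without proof, remarking only that it is "a consequence of the associativity condition," and your verification — that the left and right morphism squares reduce to the associativity axiom, once with and once without the associator transported across — is exactly the computation being alluded to. The bookkeeping of $\mu_\mu$ and ${}_\mu\mu$ against Proposition~\ref{prop:tens_prod_of_right_and_left_act} is also read off correctly.
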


Let $\mathsf{S}=(S,\mu)$ be a semigroup in $\mathbb{C}$.  
Let us introduce the set-theoretic maps $ \mathbf{Act}_{\mathsf{S}}(\mathbb{C})((S,\mu),(S,\mu))\xrightarrow{\mathsf{g}_{\mathbb{C},\mathsf{S}}} {}_{\mathsf{S}}\mathbf{Act}_{\mathsf{S}}(\mathbb{C})((S,\mu)\otimes(S,\mu),(S,\mu,\mu))$ using Prop.~\ref{prop:tens_prod_of_right_and_left_act} and Lemma~\ref{lem:mu_is_bi_act_morphism} by $\mathsf{g}_{\mathbb{C},\mathsf{S}}(L):=\mu\circ (id_S\otimes L)$ and $ {}_{\mathsf{S}}\mathbf{Act}((S,\mu),(S,\mu))\xrightarrow{\mathsf{d}_{\mathbb{C},\mathsf{S}}} {}_{\mathsf{S}}\mathbf{Act}_{\mathsf{S}}((S,\mu)\otimes(S,\mu),(S,\mu,\mu))$ by $\mathsf{d}_{\mathbb{C},\mathsf{S}}(R):=\mu\circ (R\otimes id_S)$. Let $$\mathit{Mult}_{\mathbb{C}}(\mathsf{S}):=\left(\mathbf{Act}_{\mathsf{S}}(\mathbb{C})((S,\mu),(S,\mu))\right){}_{\mathsf{g}}\times_{\mathsf{d}}\left({}_{\mathsf{S}}\mathbf{Act}(\mathbb{C})((S,\mu),(S,\mu))\right)$$ (fibre product computed in $\mathbf{Set}$). The members of $\mathit{Mult}_{\mathbb{C}}(\mathsf{S})$ are referred to as the {\em multipliers} of $\mathsf{S}$.

\begin{proposition}\label{prop:monoid_of_multipliers_in_mon_cat}
Let $\mathsf{S}=(S,\mu)$ be a semigroup in $\mathbb{C}$.   $(\mathit{Mult}_{\mathbb{C}}(\mathsf{S}),\star,(id_S,id_S))$ is a monoid  denoted by $\mathsf{Mult}_{\mathbb{C}}(\mathsf{S})$, under $(L',R')\star(L,R):=(L'\circ L,R\circ R')$. In fact, $\mathit{Mult}_{\mathbb{C}}(\mathsf{S})$ is a submonoid of the product monoid $\mathbf{Act}_{\mathsf{S}}(\mathbb{C})((S,\mu),(S,\mu))\times {}_{\mathsf{S}}\mathbf{Act}((S,\mu),(S,\mu))^{op}$. 
\end{proposition}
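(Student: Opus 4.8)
The plan is to prove the stronger ``submonoid'' assertion, from which the monoid structure follows at once. First I record that for \emph{any} object $x$ of \emph{any} category the set of endomorphisms of $x$ is a monoid under composition with unit $id_x$; applying this in $\mathbf{Act}_{\mathsf{S}}(\mathbb{C})$ and in ${}_{\mathsf{S}}\mathbf{Act}(\mathbb{C})$ to the object $(S,\mu)$ makes both factors $\mathbf{Act}_{\mathsf{S}}(\mathbb{C})((S,\mu),(S,\mu))$ and ${}_{\mathsf{S}}\mathbf{Act}(\mathbb{C})((S,\mu),(S,\mu))$ into monoids under $\circ$ with unit $id_S$. Forming the product of the first with the \emph{opposite} of the second yields a monoid whose multiplication sends $((L',R'),(L,R))$ to $(L'\circ L,\,R\circ R')$ and whose unit is $(id_S,id_S)$; this is exactly $\star$ and its proposed unit. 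It therefore suffices to check that $\mathit{Mult}_{\mathbb{C}}(\mathsf{S})$ is closed under this multiplication and contains $(id_S,id_S)$.

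Since the forgetful functor ${}_{\mathsf{S}}|-|_{\mathsf{S}}$ is faithful, the defining condition $\mathsf{g}_{\mathbb{C},\mathsf{S}}(L)=\mathsf{d}_{\mathbb{C},\mathsf{S}}(R)$ of the fibre product is equivalent to the equality of the underlying $C$-morphisms, namely the abstract linking relation
\[
\mu\circ(id_S\otimes L)=\mu\circ(R\otimes id_S).
\]
Thus $\mathit{Mult}_{\mathbb{C}}(\mathsf{S})$ is precisely the subset of the product monoid cut out by this relation, $\mathsf{g}_{\mathbb{C},\mathsf{S}}$ and $\mathsf{d}_{\mathbb{C},\mathsf{S}}$ being well defined by Proposition~\ref{prop:tens_prod_of_right_and_left_act} and Lemma~\ref{lem:mu_is_bi_act_morphism}. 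The unit $(id_S,id_S)$ satisfies the relation trivially, as $id_S\otimes id_S=id_{S\otimes S}$.

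For closure, let $(L,R),(L',R')\in\mathit{Mult}_{\mathbb{C}}(\mathsf{S})$. The composites $L'\circ L$ and $R\circ R'$ automatically remain a right and a left $\mathsf{S}$-morphism respectively, so only the linking relation for the pair $(L'\circ L,R\circ R')$ must be verified. I would compute
\[
\mu\circ(id_S\otimes(L'\circ L))=\mu\circ(id_S\otimes L')\circ(id_S\otimes L)=\mu\circ(R'\otimes id_S)\circ(id_S\otimes L),
\]
the second equality being the linking relation for $(L',R')$. By bifunctoriality of $\otimes$ the two middle factors operate on disjoint tensor slots, so the interchange law gives $(R'\otimes id_S)\circ(id_S\otimes L)=(id_S\otimes L)\circ(R'\otimes id_S)=R'\otimes L$; substituting and then applying the linking relation for $(L,R)$ yields
\[
\mu\circ(id_S\otimes L)\circ(R'\otimes id_S)=\mu\circ(R\otimes id_S)\circ(R'\otimes id_S)=\mu\circ((R\circ R')\otimes id_S),
\]
which is exactly the linking relation for $(L'\circ L,R\circ R')$. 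Hence this pair lies in $\mathit{Mult}_{\mathbb{C}}(\mathsf{S})$, establishing closure and completing the argument.

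The only point requiring any care is the interchange step, i.e.\ sliding $id_S\otimes L$ past $R'\otimes id_S$; once one observes that these morphisms act on the two tensor factors independently, the middle-four interchange of the bifunctor $\otimes$ applies directly and the remainder is purely formal.
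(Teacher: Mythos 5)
Your proof is correct, and it supplies exactly the routine verification that the paper omits (the proposition is stated there without proof): identify $\mathit{Mult}_{\mathbb{C}}(\mathsf{S})$ as the subset of the product monoid $\mathbf{Act}_{\mathsf{S}}(\mathbb{C})((S,\mu),(S,\mu))\times {}_{\mathsf{S}}\mathbf{Act}(\mathbb{C})((S,\mu),(S,\mu))^{op}$ cut out by the linking relation $\mu\circ(id_S\otimes L)=\mu\circ(R\otimes id_S)$, and check closure via the middle-four interchange. The one step that needs care --- sliding $id_S\otimes L$ past $R'\otimes id_S$ using bifunctoriality of $\otimes$ --- is handled correctly, so nothing is missing.
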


In what follows $\mathsf{Mult}_{\mathbb{C}}(\mathsf{S})$ is referred to as the {\em multiplier monoid} of the semigroup object $\mathsf{S}$ in $\mathbb{C}$. The following result is clear. 

\begin{lemma}\label{lem:formal_symmetry}
Let $\mathbb{C}$ be a monoidal category and let $\mathsf{S}=(S,\mu)$ be a semigroup object in $\mathbb{C}$. Then, $\mathit{Mult}_{\mathbb{C}^t}(\mathsf{S})=\{\, (R,L)\colon (L,R)\in \mathit{Mult}_{\mathbb{C}}(\mathsf{S})\,\}$ and $(L,R)\mapsto (R,L)$ provides an isomorphism of monoids from $\mathsf{Mult}_{\mathbb{C}}(\mathsf{S})$ to $\mathsf{Mult}_{\mathbb{C}^t}(\mathsf{S})^{op}$. 
\end{lemma}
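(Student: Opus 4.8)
The plan is to unwind the definition of $\mathit{Mult}_{\mathbb{C}^t}(\mathsf{S})$ directly and match it term by term against $\mathit{Mult}_{\mathbb{C}}(\mathsf{S})$, using only elementary facts about the transpose: that $f\otimes^t g=g\otimes f$ on morphisms (so in particular $id_S\otimes^t h=h\otimes id_S$ and $h\otimes^t id_S=id_S\otimes h$), that $S\otimes^t S=S\otimes S$ as objects, and that $(\mathbb{C}^t)^t=\mathbb{C}$. First I would record, by applying Lemma~\ref{lem:left_act_is_right_act_for_transpose} with $\mathbb{C}$ replaced by $\mathbb{C}^t$, the two identifications of hom-sets $\mathbf{Act}_{\mathsf{S}}(\mathbb{C}^t)((S,\mu),(S,\mu))={}_{\mathsf{S}}\mathbf{Act}(\mathbb{C})((S,\mu),(S,\mu))$ and ${}_{\mathsf{S}}\mathbf{Act}(\mathbb{C}^t)((S,\mu),(S,\mu))=\mathbf{Act}_{\mathsf{S}}(\mathbb{C})((S,\mu),(S,\mu))$. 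Thus the two factors of the fibre product defining $\mathit{Mult}_{\mathbb{C}^t}(\mathsf{S})$ are, as sets, exactly the left $\mathsf{S}$-act endomorphisms and the right $\mathsf{S}$-act endomorphisms of $\mathsf{S}$ in $\mathbb{C}$, but in the order opposite to that defining $\mathit{Mult}_{\mathbb{C}}(\mathsf{S})$.

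Next I would compute the two structure maps in $\mathbb{C}^t$. For a right $\mathsf{S}$-act endomorphism $P$ in $\mathbb{C}^t$ (i.e.\ a left one in $\mathbb{C}$) one has $\mathsf{g}_{\mathbb{C}^t,\mathsf{S}}(P)=\mu\circ(id_S\otimes^t P)=\mu\circ(P\otimes id_S)$, which is the value $\mathsf{d}_{\mathbb{C},\mathsf{S}}(P)$; for a left $\mathsf{S}$-act endomorphism $Q$ in $\mathbb{C}^t$ (i.e.\ a right one in $\mathbb{C}$) one has $\mathsf{d}_{\mathbb{C}^t,\mathsf{S}}(Q)=\mu\circ(Q\otimes^t id_S)=\mu\circ(id_S\otimes Q)=\mathsf{g}_{\mathbb{C},\mathsf{S}}(Q)$, the equalities of $C$-morphisms being all that the fibre product in $\mathbf{Set}$ requires. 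Hence a pair $(P,Q)$ lies in $\mathit{Mult}_{\mathbb{C}^t}(\mathsf{S})$ iff $\mu\circ(P\otimes id_S)=\mu\circ(id_S\otimes Q)$, which upon setting $P=R$, $Q=L$ is precisely the linking equation $\mathsf{g}_{\mathbb{C},\mathsf{S}}(L)=\mathsf{d}_{\mathbb{C},\mathsf{S}}(R)$ characterising $(L,R)\in\mathit{Mult}_{\mathbb{C}}(\mathsf{S})$. This establishes the set-theoretic equality $\mathit{Mult}_{\mathbb{C}^t}(\mathsf{S})=\{\,(R,L):(L,R)\in\mathit{Mult}_{\mathbb{C}}(\mathsf{S})\,\}$, and shows that the swap $\Phi\colon(L,R)\mapsto(R,L)$ is a bijection (its inverse being the swap for $(\mathbb{C}^t)^t=\mathbb{C}$) carrying $(id_S,id_S)$ to $(id_S,id_S)$.

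Finally I would check that $\Phi$ reverses the product. Writing $\star$ for the law of $\mathsf{Mult}_{\mathbb{C}}(\mathsf{S})$ and $\star'$ for that of $\mathsf{Mult}_{\mathbb{C}^t}(\mathsf{S})$, both given by the same formula of Proposition~\ref{prop:monoid_of_multipliers_in_mon_cat} applied in the respective ambient categories, I would expand $\Phi\big((L',R')\star(L,R)\big)=\Phi(L'\circ L,R\circ R')=(R\circ R',\,L'\circ L)$ and, separately, $\Phi(L,R)\star'\Phi(L',R')=(R,L)\star'(R',L')=(R\circ R',\,L'\circ L)$; the two agree. Therefore $\Phi\big((L',R')\star(L,R)\big)=\Phi(L,R)\star'\Phi(L',R')$, which is exactly the statement that $\Phi$ is a monoid homomorphism into $\mathsf{Mult}_{\mathbb{C}^t}(\mathsf{S})^{op}$; being a bijection, it is an isomorphism. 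The only delicate point is bookkeeping: keeping straight, at each occurrence, which of the two factors a symbol inhabits before and after transposing, and remembering that equality in the fibre product is equality of underlying $C$-morphisms, so that the associativity and unit coherence constraints of $\mathbb{C}^t$ never have to be touched beyond what Lemma~\ref{lem:left_act_is_right_act_for_transpose} has already absorbed.
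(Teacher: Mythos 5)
Your proof is correct and is exactly the definition-unwinding the paper has in mind: the paper states this lemma without proof ("The following result is clear"), and your argument — identifying the two hom-sets via Lemma~\ref{lem:left_act_is_right_act_for_transpose}, observing that $\mathsf{g}_{\mathbb{C}^t,\mathsf{S}}$ and $\mathsf{d}_{\mathbb{C}^t,\mathsf{S}}$ become $\mathsf{d}_{\mathbb{C},\mathsf{S}}$ and $\mathsf{g}_{\mathbb{C},\mathsf{S}}$ under $f\otimes^t g=g\otimes f$, and then checking that the swap reverses $\star$ — supplies precisely the omitted details. No gaps.
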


\subsection{Inner multipliers induced by generalized elements}\label{sec:inner_mult_and_gen_elt}

Since they might be no elements for an object $S$ in an arbitrary monoidal category $\mathbb{C}$, one cannot  directly define the inner multipliers as in the set-theoretic situation. To get rid of this obstruction one uses instead the {\em generalized elements}, that is, the members of the hom-set $C(I,S)$. 
\begin{lemma}
Let $\mathsf{S}=(S,\mu)$ be a semigroup in $\mathbb{C}$. Let $f\in C(I,S)$. Let $L_f:=S\xrightarrow{\lambda_S^{-1}}I\otimes S\xrightarrow{f\otimes id_S}S\otimes S\xrightarrow{\mu}S$. Then, $L_f\in \mathbf{Act}_{\mathsf{S}}(\mathbb{C})((S,\mu),(S,\mu))$. 
\end{lemma}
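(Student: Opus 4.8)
The plan is to unfold what it means for $L_f$ to lie in $\mathbf{Act}_{\mathsf{S}}(\mathbb{C})((S,\mu),(S,\mu))$. Since the right $\mathsf{S}$-act structure on $S$ is $\mu$ itself, the right $\mathsf{S}$-morphism square from Section~\ref{sec:actions} says precisely that
\[
L_f\circ\mu = \mu\circ(L_f\otimes id_S)\colon S\otimes S\to S.
\]
So the whole proof reduces to verifying this single identity after substituting the definition $L_f = \mu\circ(f\otimes id_S)\circ\lambda_S^{-1}$. Morally this is just the internalization of the set-theoretic fact that the left translation $\mathfrak{L}_f$ is compatible with right translations, i.e. $f*(y*z)=(f*y)*z$; the only real work is tracking the coherence isomorphisms.

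First I would simplify the left-hand side. Writing $L_f\circ\mu = \mu\circ(f\otimes id_S)\circ\lambda_S^{-1}\circ\mu$, I push $\lambda_S^{-1}$ through $\mu$ by naturality of $\lambda$, turning $\lambda_S^{-1}\circ\mu$ into $(id_I\otimes\mu)\circ\lambda_{S\otimes S}^{-1}$, and then use bifunctoriality of $\otimes$ (the interchange law) to collapse $(f\otimes id_S)\circ(id_I\otimes\mu)$ into $f\otimes\mu$. This yields $L_f\circ\mu = \mu\circ(f\otimes\mu)\circ\lambda_{S\otimes S}^{-1}$.

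Next I would simplify the right-hand side. Expanding $L_f\otimes id_S$ by functoriality of $-\otimes id_S$ and applying the semigroup associativity axiom $\mu\circ(\mu\otimes id_S)=\mu\circ(id_S\otimes\mu)\circ\alpha_{S,S,S}$, I use naturality of $\alpha$ to rewrite $\alpha_{S,S,S}\circ((f\otimes id_S)\otimes id_S)$ as $(f\otimes id_{S\otimes S})\circ\alpha_{I,S,S}$, and once more the interchange law to collapse $(id_S\otimes\mu)\circ(f\otimes id_{S\otimes S})$ into $f\otimes\mu$. This yields $\mu\circ(L_f\otimes id_S) = \mu\circ(f\otimes\mu)\circ\alpha_{I,S,S}\circ(\lambda_S^{-1}\otimes id_S)$.

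Comparing the two expressions, equality then follows from the single coherence identity $\alpha_{I,S,S}\circ(\lambda_S^{-1}\otimes id_S)=\lambda_{S\otimes S}^{-1}$, equivalently $\lambda_{S\otimes S}\circ\alpha_{I,S,S}=\lambda_S\otimes id_S$, which is a standard consequence of Mac Lane's pentagon and triangle coherence axioms. I expect this last coherence identity, together with getting the orientation of the naturality squares for $\lambda$ and $\alpha$ exactly right, to be the only genuine obstacle; everything else is the interchange law and the defining axioms of a semigroup object. Alternatively, one could package the three simplification steps as a single commutative diagram whose outer boundary is the desired square and whose inner cells are the two naturality squares, the associativity axiom, and the unit coherence triangle.
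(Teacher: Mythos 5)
Your proof is correct and is essentially the paper's own argument: the paper verifies the identity $L_f\circ\mu=\mu\circ(L_f\otimes id_S)$ by a single commutative diagram whose cells are precisely the naturality of $\lambda$, the naturality of $\alpha$, the interchange law, associativity of $\mu$, and the coherence triangle $\alpha_{I,S,S}\circ(\lambda_S^{-1}\otimes id_S)=\lambda_{S\otimes S}^{-1}$. Your equational chain is just the unrolled version of that diagram, with the same ingredients in the same places.
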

\begin{proof}
All the cells commute in the diagram below, so commutes the surrounding diagram. (The left down triangle commutes by coherence.)
\begin{equation}
\xymatrix@R=1em{
S\ar[r]^{\lambda^{-1}_S} & I\otimes S\ar[r]^{f\otimes id_S} & S\otimes S\ar[r]^{\mu} & S\\
&I\otimes (S\otimes S) \ar[ld]_{\lambda_{S\otimes S}^{-1}}\ar[u]^{id_I\otimes \mu}\ar[r]^{f\otimes (id_S\otimes id_S)}&\ar[u]_{id_S\otimes \mu} S\otimes (S\otimes S) & \\
S\otimes S \ar[uu]^{\mu}\ar[r]_{\lambda_{S}^{-1}\otimes id_S}&(I\otimes S)\otimes S \ar[r]_{(f\otimes id_S)\otimes id_S}\eq[u]& (S\otimes S)\otimes S\eq[u]\ar[r]_{\mu} & S\otimes S\ar[uu]_{\mu}
}
\end{equation}
\end{proof}

 By symmetry,
\begin{lemma}
Let $\mathsf{S}=(S,\mu)$ be a semigroup in $\mathbb{C}$. Let $f\in C(I,S)$. Let $R_f:=S\xrightarrow{\rho_S^{-1}}S\otimes I\xrightarrow{id_S\otimes f}S\otimes S\xrightarrow{\mu}S$. Then, $R_f\in {}_{\mathsf{S}}\mathbf{Act}(\mathbb{C})((S,\mu),(S,\mu))$. 
\end{lemma}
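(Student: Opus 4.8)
The plan is to deduce this statement from the preceding lemma by passing to the transpose monoidal category $\mathbb{C}^t$, which is precisely the ``symmetry'' alluded to in the statement. The crux is that the composite defining $R_f$ in $\mathbb{C}$ coincides, on the nose, with the composite defining $L_f$ once the latter is reinterpreted in $\mathbb{C}^t$; so the nontrivial diagram chase of the preceding lemma is inherited for free rather than repeated.

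First I would invoke Lemma~\ref{lem:sem_in_C_is_the_same_sem_in_transpose} to note that $\mathsf{S}=(S,\mu)$ is also a semigroup in $\mathbb{C}^t$, so the preceding lemma applies verbatim with $\mathbb{C}$ replaced by $\mathbb{C}^t$. It then yields a morphism $L_f^{(t)}\in \mathbf{Act}_{\mathsf{S}}(\mathbb{C}^t)((S,\mu),(S,\mu))$, namely $L_f^{(t)}=\mu\circ (f\otimes^t id_S)\circ (\lambda_S^t)^{-1}$.

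Next I would unwind the transpose data as recorded in Section~\ref{sec:prerequisites}. Since $I\otimes^t S=S\otimes I$, since $(\lambda_S^t)^{-1}=\rho_S^{-1}$, and since $f\otimes^t id_S=id_S\otimes f$ (the definition of $\otimes^t$ reverses the order of the factors), while $S\otimes^t S=S\otimes S$ carries the same multiplication $\mu$, the composite $L_f^{(t)}$ is literally $\mu\circ (id_S\otimes f)\circ \rho_S^{-1}=R_f$. Hence $R_f=L_f^{(t)}$. Finally, Lemma~\ref{lem:left_act_is_right_act_for_transpose} gives $\mathbf{Act}_{\mathsf{S}}(\mathbb{C}^t)={}_{\mathsf{S}}\mathbf{Act}(\mathbb{C})$, so $R_f=L_f^{(t)}\in {}_{\mathsf{S}}\mathbf{Act}(\mathbb{C})((S,\mu),(S,\mu))$, as required.

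There is no genuine obstacle here: the only point to treat with care is the bookkeeping that translates the transpose structure $(\otimes^t,\lambda^t,\rho^t)$ back into $\mathbb{C}$ and confirms the on-the-nose identification $R_f=L_f^{(t)}$. (Alternatively one could simply mirror the preceding diagram chase directly, replacing $\lambda$ by $\rho$ and swapping tensor factors throughout; the transpose argument merely makes the informal ``by symmetry'' rigorous without rewriting that diagram.)
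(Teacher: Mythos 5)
Your proof is correct and matches the paper's intent: the paper offers no written proof beyond the phrase ``by symmetry,'' and the transpose machinery you invoke (Lemma~\ref{lem:sem_in_C_is_the_same_sem_in_transpose} and Lemma~\ref{lem:left_act_is_right_act_for_transpose}) is exactly what the paper sets up to make such symmetry arguments rigorous. The bookkeeping identifying $R_f$ with $L_f$ computed in $\mathbb{C}^t$ (via $\lambda^t_S=\rho_S$ and $f\otimes^t id_S=id_S\otimes f$) is accurate.
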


\begin{lemma}\label{lem:inner_multiplier}
Let $\mathsf{S}=(S,\mu)$ be a semigroup in $\mathbb{C}$. Let $f\in C(I,S)$. Then, $(L_f,R_f)\in \mathit{Mult}_{\mathbb{C}}(\mathsf{S})$.
\end{lemma}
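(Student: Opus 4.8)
The plan is to observe that, beyond the two preceding lemmas, the statement reduces to a single linking identity. The lemma providing $L_f\in\mathbf{Act}_{\mathsf{S}}(\mathbb{C})((S,\mu),(S,\mu))$ and the one providing $R_f\in{}_{\mathsf{S}}\mathbf{Act}(\mathbb{C})((S,\mu),(S,\mu))$ already place the pair $(L_f,R_f)$ in the product of the two hom-sets whose fibre product defines $\mathit{Mult}_{\mathbb{C}}(\mathsf{S})$. Hence membership in that fibre product amounts precisely to the equality $\mathsf{g}_{\mathbb{C},\mathsf{S}}(L_f)=\mathsf{d}_{\mathbb{C},\mathsf{S}}(R_f)$ in ${}_{\mathsf{S}}\mathbf{Act}_{\mathsf{S}}(\mathbb{C})((S,\mu)\otimes(S,\mu),(S,\mu,\mu))$, that is, to the identity of $C$-morphisms $\mu\circ(id_S\otimes L_f)=\mu\circ(R_f\otimes id_S)\colon S\otimes S\to S$. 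This is the categorical incarnation of the elementary relation $x*(f*y)=(x*f)*y$, so I expect associativity of $\mu$ together with the unit coherence constraints to be exactly what is needed.

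First I would unfold both sides by bifunctoriality of $\otimes$, using $L_f=\mu\circ(f\otimes id_S)\circ\lambda_S^{-1}$ and $R_f=\mu\circ(id_S\otimes f)\circ\rho_S^{-1}$, so that the left-hand side becomes $\mu\circ(id_S\otimes\mu)\circ(id_S\otimes(f\otimes id_S))\circ(id_S\otimes\lambda_S^{-1})$ while the right-hand side becomes $\mu\circ(\mu\otimes id_S)\circ((id_S\otimes f)\otimes id_S)\circ(\rho_S^{-1}\otimes id_S)$. Applying the associativity square $\mu\circ(\mu\otimes id_S)=\mu\circ(id_S\otimes\mu)\circ\alpha_{S,S,S}$ gives the right-hand side the common prefix $\mu\circ(id_S\otimes\mu)$, so it suffices to prove the equality of the two composites $S\otimes S\to S\otimes(S\otimes S)$ sitting behind that prefix, namely $\alpha_{S,S,S}\circ((id_S\otimes f)\otimes id_S)\circ(\rho_S^{-1}\otimes id_S)=(id_S\otimes(f\otimes id_S))\circ(id_S\otimes\lambda_S^{-1})$.

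Next I would invoke naturality of $\alpha$ at the triple $(id_S,f,id_S)$ to rewrite $\alpha_{S,S,S}\circ((id_S\otimes f)\otimes id_S)=(id_S\otimes(f\otimes id_S))\circ\alpha_{S,I,S}$; cancelling the common left factor $id_S\otimes(f\otimes id_S)$ then leaves the purely coherence-theoretic identity $\alpha_{S,I,S}\circ(\rho_S^{-1}\otimes id_S)=id_S\otimes\lambda_S^{-1}$, which is exactly the triangle axiom $(id_S\otimes\lambda_S)\circ\alpha_{S,I,S}=\rho_S\otimes id_S$ read in inverse form, completing the argument. The only delicate point, and thus the main obstacle, is this coherence bookkeeping: recognizing that the left unit constraint $\lambda_S^{-1}$ introduced by $L_f$ and the right unit constraint $\rho_S^{-1}$ introduced by $R_f$ are reconciled precisely by the triangle identity, with associativity of $\mu$ and naturality of $\alpha$ absorbing everything else. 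The cleanest presentation would package the whole computation as a single commuting diagram whose cells are labelled by bifunctoriality of $\otimes$, the associativity square for $\mu$, naturality of $\alpha$, and the triangle axiom.
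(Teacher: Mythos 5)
Your proposal is correct and follows essentially the same route as the paper: reduce to the linking identity $\mu\circ(id_S\otimes L_f)=\mu\circ(R_f\otimes id_S)$, unfold the definitions, and conclude by associativity of $\mu$, naturality of $\alpha$ at $(id_S,f,id_S)$, and the unit coherence (triangle) axiom. The paper presents this as a single chain of equalities starting from $\mu\circ(R_f\otimes id_S)$ and labels the last step simply ``by coherence,'' whereas you correctly pinpoint it as the triangle identity $\alpha_{S,I,S}\circ(\rho_S^{-1}\otimes id_S)=id_S\otimes\lambda_S^{-1}$; this is only a difference of presentation.
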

\begin{proof}
\begin{equation}
\begin{array}{lll}
\mu\circ (R_f\otimes id_S)&=&\mu\circ (\mu\otimes id_S)\circ ((id_S\otimes f)\otimes id_S)\circ (\rho_S^{-1}\otimes id_S)\\
&=&\mu\circ (id_S\otimes \mu)\circ \alpha_{S,S,S}\circ ((id_S\otimes f)\otimes id_S)\circ (\rho_S^{-1}\otimes id_S)\\
&&\mbox{(by associativity of $\mu$)}\\
&=&\mu\circ (id_S\otimes\mu)\circ (id_S\otimes (f\otimes id_S))\circ \alpha_{S,I,S}\circ  (\rho_S^{-1}\otimes id_S)\\
&&\mbox{(by naturality of $\alpha$)}\\
&=&\mu\circ (id_S\otimes\mu)\circ (id_S\otimes (f\otimes id_S))\circ(id_S\otimes \lambda_S^{-1})\\
&&\mbox{(by coherence)}\\
&=&\mu\circ(id_S\otimes L_f).
\end{array}
\end{equation}
\end{proof}

According to Lemma~\ref{lem:inner_multiplier}, for each semigroup $\mathsf{S}$ in $\mathbb{C}$, one has a map $C(I,S)\xrightarrow{{\mathcal{M}_{\mathbb{C},\mathsf{S}}}}\mathit{Mult}_{\mathbb{C}}(\mathsf{S})$ given by $f\mapsto (L_f,R_f)$. The image of this map is the set of all {\em inner multipliers} of $\mathsf{S}$ in $\mathbb{C}$ and is denoted $InnMult_{\mathbb{C}}(\mathsf{S}):=\mathcal{M}_{\mathsf{S}}(C(I,S))$.  Moreover $L_f$ and $R_f$ are respectively called the {\em inner left translation} and the {\em inner right translation} induced by $f$. $\mathcal{M}_{\mathbb{C},\mathsf{S}}$ plays the role of the canonical homomorphism $\mathfrak{M}_{\mathsf{S}}$ for an  ordinary semigroup $\mathsf{S}$.


\begin{proposition}\label{prop:hom_M}
Let $\mathsf{S}=(S,\mu)$ be a semigroup in $\mathbb{C}$.  Then, $\mathcal{M}_{\mathsf{S}}\colon \mathsf{Conv}_I(\mathsf{S})\to |\mathsf{Mult}_{\mathbb{C}}(\mathsf{S})|=(\mathit{Mult}_{\mathbb{C}}(\mathsf{S}),\star)$ is a homomorphism of semigroups and in particular, $InnMult_{\mathbb{C}}(\mathsf{S})$ is a sub-semigroup of $(\mathit{Mult}_{\mathbb{C}}(\mathsf{S}),\star)$. 
\end{proposition}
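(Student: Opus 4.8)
The plan is to verify directly that $\mathcal{M}_\mathsf{S}$ respects the two multiplications. Since a multiplier is a pair $(L,R)$ and the product $\star$ operates componentwise (with the right component in the opposite order), the homomorphism identity $\mathcal{M}_\mathsf{S}(g\bullet_\mu f)=\mathcal{M}_\mathsf{S}(g)\star\mathcal{M}_\mathsf{S}(f)$ splits into the two identities
\[
L_{g\bullet_\mu f}=L_g\circ L_f \qquad\text{and}\qquad R_{g\bullet_\mu f}=R_f\circ R_g,
\]
for all $g,f\in C(I,S)$. So it suffices to establish these separately.

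First I would treat the left identity. Unfolding $g\bullet_\mu f=\mu\circ(g\otimes f)\circ\lambda_I^{-1}$ and using functoriality of $\otimes$, the left-hand side $L_{g\bullet_\mu f}=\mu\circ((g\bullet_\mu f)\otimes id_S)\circ\lambda_S^{-1}$ becomes $\mu\circ(\mu\otimes id_S)\circ((g\otimes f)\otimes id_S)\circ(\lambda_I^{-1}\otimes id_S)\circ\lambda_S^{-1}$. Applying associativity of $\mu$ to replace $\mu\circ(\mu\otimes id_S)$ by $\mu\circ(id_S\otimes\mu)\circ\alpha_{S,S,S}$, then naturality of $\alpha$ to slide the associator past $(g\otimes f)\otimes id_S$, and the interchange law for $\otimes$, brings this to the form $\mu\circ\bigl(g\otimes(\mu\circ(f\otimes id_S))\bigr)\circ\bigl(\alpha_{I,I,S}\circ(\lambda_I^{-1}\otimes id_S)\circ\lambda_S^{-1}\bigr)$. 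On the other side, naturality of $\lambda$ (to move $\lambda_S^{-1}$ through $L_f$) together with the interchange law reduce $L_g\circ L_f$ to $\mu\circ\bigl(g\otimes(\mu\circ(f\otimes id_S))\bigr)\circ(id_I\otimes\lambda_S^{-1})\circ\lambda_S^{-1}$. The two expressions share the leading factor $\mu\circ\bigl(g\otimes(\mu\circ(f\otimes id_S))\bigr)$, so the identity reduces to the purely structural equation $\alpha_{I,I,S}\circ(\lambda_I^{-1}\otimes id_S)\circ\lambda_S^{-1}=(id_I\otimes\lambda_S^{-1})\circ\lambda_S^{-1}$ among the coherence constraints, with no further reference to $\mu$.

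This last equation is where the real care lies, and it is the main obstacle: it must be disposed of using only coherence. I would invoke the standard consequences of the pentagon/triangle axioms, namely $\lambda_{I\otimes S}=id_I\otimes\lambda_S$ (a direct consequence of naturality of $\lambda$) and $\lambda_{I\otimes S}\circ\alpha_{I,I,S}=\lambda_I\otimes id_S$; substituting the second to eliminate $\alpha_{I,I,S}$ and cancelling $\lambda_I\circ\lambda_I^{-1}=id_I$ turns the left-hand side into $\lambda_{I\otimes S}^{-1}\circ\lambda_S^{-1}$, which equals the right-hand side by the first consequence. As in the proof of Lemma~\ref{lem:inner_multiplier}, I would present the whole computation as a single commuting diagram whose cells are instances of associativity, naturality of $\alpha$ and $\lambda$, functoriality of $\otimes$, and coherence.

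For the right identity $R_{g\bullet_\mu f}=R_f\circ R_g$ I would not repeat the chase but invoke symmetry: passing to the transpose $\mathbb{C}^t$, Lemma~\ref{lem:sem_in_C_is_the_same_sem_in_transpose} keeps $\mathsf{S}$ a semigroup, the inner left translation of $f$ computed in $\mathbb{C}^t$ is exactly $R_f$, and the convolution of $\mathbb{C}^t$ is the opposite of $\bullet_\mu$; so the left identity applied in $\mathbb{C}^t$ yields the right identity in $\mathbb{C}$ (this is precisely the content of Lemma~\ref{lem:formal_symmetry}). Finally, the ``in particular'' clause is immediate: $InnMult_{\mathbb{C}}(\mathsf{S})=\mathcal{M}_\mathsf{S}(C(I,S))$ is the image of the semigroup homomorphism $\mathcal{M}_\mathsf{S}$, and the image of a homomorphism of semigroups is a sub-semigroup of the codomain $(\mathit{Mult}_{\mathbb{C}}(\mathsf{S}),\star)$.
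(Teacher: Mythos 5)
Your proof is correct and follows essentially the same route as the paper: a direct computation of $L_{g\bullet_\mu f}=L_g\circ L_f$ via associativity of $\mu$, naturality of $\alpha$ and $\lambda$, and a coherence identity, with the right-hand identity obtained by symmetry and the sub-semigroup claim read off as the image of a homomorphism. The only difference is one of presentation: you isolate the entire coherence content in the single structural equation $\alpha_{I,I,S}\circ(\lambda_I^{-1}\otimes id_S)\circ\lambda_S^{-1}=(id_I\otimes\lambda_S^{-1})\circ\lambda_S^{-1}$ and justify it from the standard Kelly lemmas, and you make the paper's ``by symmetry'' precise via the transpose $\mathbb{C}^t$ (Lemma~\ref{lem:sem_in_C_is_the_same_sem_in_transpose}) --- both welcome clarifications rather than deviations.
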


\begin{proof}
Let $f,g\in C(I,S)$. One has 
\begin{equation}
\begin{array}{l}
L_{g\bullet_\mu f}=\mu\circ (\mu\otimes id_S)\circ ((g\otimes f)\otimes id_S)\circ (\lambda_I^{-1}\otimes id_S)\circ \lambda_S^{-1}\\
=\mu \circ (id_S\otimes \mu)\circ \alpha_{S,S,S}\circ ((g\otimes f)\otimes id_S)\circ (\lambda_I^{-1}\otimes id_S)\circ \lambda_S^{-1}\\
\mbox{(by associativity)}\\
=\mu\circ (id_S\otimes \mu)\circ\alpha_{S,S,S}\circ ((g\otimes id_S)\otimes id_S)\circ ((id_I\otimes f)\otimes id_S)\circ (\lambda_I^{-1}\otimes id_S)\circ \lambda_S^{-1}\\
=\mu\circ (id_S\otimes\mu)\circ (g\otimes (id_S\otimes id_S))\circ \alpha_{I,S,S}\circ  ((id_I\otimes f)\otimes id_S)\circ (\lambda_I^{-1}\otimes id_S)\circ \lambda_S^{-1}\\
\mbox{(by naturality of $\alpha$)}\\
=\mu\circ (g\otimes id_S)\circ (id_I\otimes \mu)\circ \alpha_{I,S,S}\circ  ((id_I\otimes f)\otimes id_S)\circ (\lambda_I^{-1}\otimes id_S)\circ \lambda_S^{-1}\\
=\mu\circ (g\otimes id_S)\circ (id_I\otimes \mu)\circ (id_\otimes (f\otimes id_S))\circ \alpha_{I,I,S}\circ  (\lambda_I^{-1}\otimes id_S)\circ \lambda_S^{-1}\\
\mbox{(by naturality of $\alpha$)}\\
=\mu\circ (g\otimes id_S)\circ (id_I\otimes \mu)\circ (id_\otimes (f\otimes id_S))\circ(id_I\otimes \lambda^{-1}_S)\circ\lambda_{S}^{-1}\\
\mbox{(by coherence)}\\
=\mu\circ (g\otimes id_S)\circ (id_I\otimes \mu)\circ \lambda_{S\otimes S}^{-1}\circ (f\otimes id_S)\circ \lambda_{S}^{-1}\\
=\mu\circ (g\otimes id_S)\circ \lambda_{S}^{-1}\circ \mu\circ (f\otimes id_S)\circ \lambda_{S}^{-1}\\
\mbox{(by naturality of $\lambda$ for the two above equations)}\\
=L_g\circ L_f.
\end{array}
\end{equation}
By symmetry one also has $R_{g\bullet_\mu f}=R_f\circ R_g$. 
\end{proof}

\begin{remark}
The proof of Prop.~\ref{prop:hom_M} also shows that the maps $ C(I,S)\xrightarrow{\mathcal{L}_{\mathbb{C},\mathsf{S}}} \mathbf{Act}_{\mathsf{S}}(\mathbb{C})((S,\mu),(S,\mu))$, $f\mapsto L_f$, and $C(I,S)\xrightarrow{\mathcal{R}_{\mathbb{C},\mathsf{S}}} {}_{\mathsf{S}}\mathbf{Act}(\mathbb{C})((S,\mu),(S,\mu))$, $f\mapsto R_f$, are   homomorphisms $\mathsf{Conv}_I(\mathsf{S})\to (\mathbf{Act}_{\mathsf{S}}(\mathbb{C})((S,\mu),(S,\mu)),\circ)$ and $\mathsf{Conv}_I(\mathsf{S})^{op}\to ({}_{\mathsf{S}}\mathbf{Act}(\mathbb{C})((S,\mu),(S,\mu)),\circ)$ respectively. 
\end{remark}

\begin{example}\label{ex:generalized_elements}
 Let $\mathsf{S}:=(S,*)$  be a semigroup. Then, $\mathsf{Mult}_{\mathbb{Set}}(\mathsf{S})$ is the ordinary translational hull $\mathsf{TrHull}(\mathsf{S})$ of $\mathsf{S}$. Moreover, $\mathsf{S}\simeq \mathsf{Conv}_1(\mathsf{S})$ under $x\mapsto \left (\mathit{gen}_{\mathsf{S}}(x)\colon 1\to S\right)$ with $\mathit{gen}_{\mathsf{S}}(x)(0):=x$. (When $S=\emptyset$, then $\mathit{gen}_{\mathsf{S}}$ is the empty map.) Moreover $\mathfrak{M}_{\mathsf{S}}=\mathcal{M}_{\mathbb{Set},\mathsf{S}}\circ \mathit{gen}_{\mathsf{S}}$.
\end{example}

%
%

\begin{example}
Let $\mathsf{S}$ be a semigroup object in $\mathbb{Ban}$, considered as a Banach algebra. Then, $\mathsf{Mult}_{\mathbb{Ban}}(\mathsf{S})$ is the underlying semigroup of the multiplier Banach algebra of $\mathsf{S}$ (see e.g.~\cite[Theorem~1.2.4, p.~29]{Palmer}).
\end{example}

Let $\mathsf{S}=(S,\mu)$ be a semigroup object in $\mathbb{C}$. Let $f,g\in C(I,S)$. Then, 
\begin{equation}\label{eq:computations_for_concretization_homomorphism}
\begin{array}{lll}
L_f\circ g&=&\mu\circ (f\otimes id_S)\circ \lambda_S^{-1}\circ g\\
&=&\mu\circ (f\otimes id_S)\circ (id_I\otimes g)\circ \lambda_I^{-1}\\
&=&f\bullet_\mu g
\end{array}
\end{equation}
and almost identically, \begin{equation}\label{eq:computations_for_concretization_homomorphism_2}
R_f\circ g=g\bullet_\mu f.
\end{equation} From these computations one deduces the following
\begin{lemma}\label{lem:injectivity_of_can_map_for_left_and_right_non_degenerate_sem}
Let $\mathsf{S}=(S,\mu)$ be a semigroup object in $\mathbb{C}$. Assume that  $\mathsf{Conv}_I(\mathsf{S})$ is non-degenerate (see  Definition~\ref{def:non-degeneracy_of_a_semigroup}). Then, $\mathcal{M}_{\mathbb{C},\mathsf{S}}\colon \mathsf{Conv}_{I}(\mathsf{S})\to (\mathit{Mult}_{\mathbb{C}}(\mathsf{S}),\star)$  is one-to-one. 
\end{lemma}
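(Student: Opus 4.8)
The plan is to unwind the asserted injectivity into a statement purely about the convolution semigroup $\mathsf{Conv}_I(\mathsf{S})=(C(I,S),\bullet_\mu)$, and then let non-degeneracy do the final step. Concretely, suppose $f,g\in C(I,S)$ satisfy $\mathcal{M}_{\mathbb{C},\mathsf{S}}(f)=\mathcal{M}_{\mathbb{C},\mathsf{S}}(g)$. Since $\mathcal{M}_{\mathbb{C},\mathsf{S}}(h)=(L_h,R_h)$, this equality in $\mathit{Mult}_{\mathbb{C}}(\mathsf{S})$ is exactly $L_f=L_g$ and $R_f=R_g$ as $C$-endomorphisms of $S$. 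My goal is to deduce $f=g$.

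The key move is to probe these two equalities against an arbitrary generalized element. Let $h\in C(I,S)$. Composing $L_f=L_g$ on the right with $h$ and invoking the identity \eqref{eq:computations_for_concretization_homomorphism}, namely $L_f\circ h=f\bullet_\mu h$, I obtain $f\bullet_\mu h=g\bullet_\mu h$. Symmetrically, composing $R_f=R_g$ on the right with $h$ and using \eqref{eq:computations_for_concretization_homomorphism_2}, i.e.\ $R_f\circ h=h\bullet_\mu f$, yields $h\bullet_\mu f=h\bullet_\mu g$. As $h$ ranges over all of $C(I,S)$, this says precisely that $f$ and $g$ act identically from both the left and the right in $\mathsf{Conv}_I(\mathsf{S})$.

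It then remains to read off $f=g$. I expect Definition~\ref{def:non-degeneracy_of_a_semigroup} to characterize non-degeneracy as weak reductivity: two elements of an ordinary semigroup that cannot be separated by multiplication on either side must be equal. Applied to the pair $(f,g)$ together with the two families of equalities just derived, this forces $f=g$, and hence $\mathcal{M}_{\mathbb{C},\mathsf{S}}$ is one-to-one.

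The argument is essentially mechanical once \eqref{eq:computations_for_concretization_homomorphism} and \eqref{eq:computations_for_concretization_homomorphism_2} are available; the only genuine content is the translation from the two map-equalities $L_f=L_g$, $R_f=R_g$ into the two pointwise conditions in the convolution semigroup. The single point I would verify with care is the exact form of the non-degeneracy hypothesis: if it demanded only left (resp.\ only right) reductivity, then $L_f=L_g$ (resp.\ $R_f=R_g$) would already suffice on its own, whereas the combined weakly-reductive formulation consumes both conditions at once. I would therefore confirm the precise wording of Definition~\ref{def:non-degeneracy_of_a_semigroup} so that the two derived families of equalities match exactly what the definition requires.
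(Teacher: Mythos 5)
Your proof is correct and is exactly the argument the paper intends (the paper leaves it implicit, stating the lemma as an immediate consequence of Eqs.~(\ref{eq:computations_for_concretization_homomorphism}) and (\ref{eq:computations_for_concretization_homomorphism_2})). On the one point you flagged: Definition~\ref{def:non-degeneracy_of_a_semigroup} takes \emph{non-degenerate} to mean left \emph{and} right non-degenerate separately (strictly stronger than weak reductivity), so either one of your two derived families of equalities would already force $f=g$ on its own.
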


\begin{proposition}\label{prop:for_com_sem_left_equal_right}
Let us assume that $(\mathbb{C},\sigma)$ is a symmetric monoidal category. Let $\mathsf{S}$ be a commutative semigroup in $\mathsf{S}$. Then, one has $\{\, (L,L)\colon L\in {}_{\mathsf{S}}\mathit{Act}(\mathbb{C})((S,\mu),(S,\mu))\,\}=\{\, (R,R)\colon R\in \mathit{Act}_{\mathsf{S}}(\mathbb{C})((S,\mu),(S,\mu))\,\}\subseteq \mathit{Mult}_{\mathbb{C}}(\mathsf{S})$. When furthermore $\mu$ is an epimorphism, then $\mathit{Mult}_{\mathbb{C}}(\mathsf{S})=\{\, (L,L)\colon L\in {}_{\mathsf{S}}\mathbf{Act}(\mathbb{C})((S,\mu),(S,\mu))\,\}$. In this  case, the multiplier monoid $\mathsf{Mult}_{\mathbb{C}}(\mathsf{S})$  and the endomorphism monoid $({}_{\mathsf{S}}\mathbf{Act}(\mathbb{C})((S,\mu),(S,\mu)),\circ,id_S)$ are isomorphic under $(L,L)\mapsto L$. 
\end{proposition}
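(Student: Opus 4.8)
The plan is to break the statement into three claims and establish them in sequence, exploiting the symmetry of the monoidal structure through the operations $L \mapsto L'$ and $R \mapsto {}^\backprime R$ introduced just before Proposition~\ref{prop:com_mon}.

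First I would prove the set equality of the two "diagonal" sets. Given a left $\mathsf{S}$-morphism $L \in {}_{\mathsf{S}}\mathbf{Act}(\mathbb{C})((S,\mu),(S,\mu))$, commutativity $\mu = \mu \circ \sigma_{S,S}$ lets me check that $L' = L \circ \sigma_{S,S}$ — or more precisely the associated right action morphism obtained via $A_l$ — actually coincides with $L$ as a $C$-morphism $S \to S$, and that $L$ is then simultaneously a right $\mathsf{S}$-morphism. The key computation is that for a commutative $\mathsf{S}$, being a left $\mathsf{S}$-morphism and being a right $\mathsf{S}$-morphism are equivalent conditions on the same underlying arrow $L\colon S \to S$, because the two equivariance squares differ only by insertion of $\sigma_{S,S}$, which $\mu$ absorbs. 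Thus $\{(L,L)\} = \{(R,R)\}$ as sets of pairs, and I must also verify each such pair $(L,L)$ lands in $\mathit{Mult}_{\mathbb{C}}(\mathsf{S})$, i.e.\ satisfies the linking condition $\mathsf{g}_{\mathbb{C},\mathsf{S}}(L) = \mathsf{d}_{\mathbb{C},\mathsf{S}}(L)$, namely $\mu \circ (\mathrm{id}_S \otimes L) = \mu \circ (L \otimes \mathrm{id}_S)$; this follows by inserting $\sigma$ and using commutativity together with the fact that $L$ is an $\mathsf{S}$-morphism on both sides.

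Second I would prove that, when $\mu$ is epi, \emph{every} multiplier is of this diagonal form. Take $(L,R) \in \mathit{Mult}_{\mathbb{C}}(\mathsf{S})$, so $\mu \circ (\mathrm{id}_S \otimes L) = \mu \circ (R \otimes \mathrm{id}_S)$. The goal is to force $L = R$. Here I would convert $R$ into a left-action morphism using commutativity (so $R$ is also a left $\mathsf{S}$-morphism), and then compare $\mu \circ (\mathrm{id}_S \otimes L)$ with $\mu \circ (\mathrm{id}_S \otimes R)$: using the linking equation, commutativity of $\mu$, and naturality of $\sigma$, I expect to derive $\mu \circ (\mathrm{id}_S \otimes L) = \mu \circ (\mathrm{id}_S \otimes R)$, whence $L = R$ after cancelling the epimorphism $\mu$. \textbf{This cancellation step is the main obstacle}: the equation $\mu \circ (\mathrm{id}_S \otimes L) = \mu \circ (\mathrm{id}_S \otimes R)$ only yields $L = R$ after precomposing to expose a bare $\mu$ on the right that can be cancelled, so I must arrange the diagram so that $\mu$ appears as the \emph{final} (leftmost) arrow in both composites and that $\mathrm{id}_S \otimes L$, $\mathrm{id}_S \otimes R$ feed identical inputs; the delicate point is that $\mu$ being epi does not immediately cancel inside a tensor, so I will instead precompose $L$ and $R$ with $\mu$ itself (exploiting that $L,R$ are act morphisms, so $L \circ \mu = \mu \circ (\mathrm{id}_S \otimes L)$ and symmetrically for $R$) to reduce to $L \circ \mu = R \circ \mu$ and cancel the epi $\mu$ directly.

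Finally, for the monoid isomorphism, I would observe that under the identification $(L,L) \mapsto L$ the multiplication $\star$ on $\mathsf{Mult}_{\mathbb{C}}(\mathsf{S})$, which sends $(L',L')\star(L,L) = (L'\circ L, L \circ L')$, must be shown to agree with composition in $({}_{\mathsf{S}}\mathbf{Act}(\mathbb{C})((S,\mu),(S,\mu)),\circ)$; this is immediate from commutativity, since for diagonal multipliers the two components compose in the same order once we record that the right-hand factor lives in the opposite monoid (Proposition~\ref{prop:monoid_of_multipliers_in_mon_cat}), and commutativity collapses the $op$. The map is evidently a bijection by the first two parts and sends $(\mathrm{id}_S,\mathrm{id}_S)$ to $\mathrm{id}_S$, so it is a monoid isomorphism.
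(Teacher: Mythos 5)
Your proposal is correct and follows essentially the same route as the paper: the first part via the observation that for commutative $\mu$ the left and right equivariance conditions on an endomorphism of $(S,\mu)$ coincide (the paper phrases this as $A_l$ from Proposition~\ref{prop:com_mon} restricting to the identity on these hom-sets) together with inserting $\sigma_{S,S}$ and absorbing it into $\mu$ to verify the linking condition; and the second part by exactly the cancellation you describe, namely using the act-morphism identities to rewrite both sides of the linking relation as $R\circ\mu$ and $L\circ\mu$ and then cancelling the epimorphism $\mu$. Your remark on the monoid isomorphism is slightly terser than it could be, but the needed fact — that $(L'\circ L, L\circ L')$ must again be diagonal, forcing the two components to agree — does follow from the surjectivity established in the second part, so nothing is missing.
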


\begin{proof}
The first equality is a consequence of Prop.~\ref{prop:com_mon} since one may notice that the isomorphism $A_l\colon {}_{\mathsf{S}}\mathbf{Act}(\mathbb{C})\to \mathbf{Act}_{\mathsf{S}}(\mathbb{C})$ restricts to the identity from ${}_{\mathsf{S}}\mathbf{Act}(\mathbb{C})((S,\mu),(S,\mu))$ to $\mathbf{Act}_{\mathsf{S}}(\mathbb{C})((S,\mu),(S,\mu))$ (because $A_l(S,\mu)=(S,\mu\circ \sigma_{S,S})=(S,\mu)$).  Let $L\in  {}_{\mathsf{S}}\mathbf{Act}(\mathbb{C})((S,\mu),(S,\mu))$. Then, $\mu\circ (id_S\otimes L)=\mu\circ \sigma_{S,S}\circ(L\otimes id_S)=\mu\circ (L\otimes id_S)$. Since $L\in \mathbf{Act}_{\mathsf{S}}((S,\mu),(S,\mu))$ it follows that $(L,L)\in \mathit{Mult}_{\mathbb{C}}(\mathsf{S})$. Let us assume that $\mu$ is an epimorphism. Let $(L,R)\in \mathit{Mult}_{\mathbb{C}}(\mathsf{S})$. Then, $R\circ \mu=\mu\circ (id_S\otimes R)=\mu\circ\sigma_{S,S}\circ (R\otimes id_S)=\mu\circ (R\otimes id_S)=\mu\circ (id_S\otimes L)=\mu\circ\sigma_{S,S}\circ (L\otimes id_S)=\mu\circ (L\otimes id_S)=L\circ\mu$, and since $\mu$ is an epimorphism, $R=L$. 
\end{proof}

\subsection{The case of monoids}

A {\em monoid object}, or simply a {\em monoid} in a monoidal category $\mathbb{C}$ is a triple $\mathsf{M}=(M,\mu,\eta)$ where $(M,\mu)$ is a semigroup in $\mathbb{C}$ and $\eta\colon I\to M$ is a $C$-morphism such that the diagrams
\begin{equation}
\xymatrix@R=1em{
I\otimes M \ar[r]^{\eta\otimes id_M}\ar[rd]_{\lambda_M} &M\otimes M\ar[d]^{\mu} & \ar[l]_{id_M\otimes\eta}\ar[ld]^{\rho_M}M\otimes I\\
&M&
}
\end{equation}
commute. When $\mathsf{M}$ is a monoid, the convolution semigroup $\mathsf{Conv}_{I}(M,\mu)$ becomes an ordinary monoid $\mathsf{Conv}_I(\mathsf{M}):=(C(I,M),\bullet_\mu,\eta)$. 

\begin{remark}\label{lem:generalized_elements_for_ordinary_monoids}
An  ordinary monoid $\mathsf{M}=(M,*,1_M)$ is a monoid object in $\mathbb{Set}$ with $\eta:=\mathit{gen}_{|\mathsf{M}|}(1_M)$  (see Example~\ref{ex:generalized_elements}).  Besides $\mathit{gen}_{|\mathsf{M}|}\colon |\mathsf{M}|\simeq \mathsf{Conv}_1(M,*)$ lifts to an isomorphism $\mathit{gen}_{\mathsf{M}}\colon \mathsf{M}\simeq \mathsf{Conv}_1(\mathsf{M})$ of monoids. \end{remark}

Similarly to the set-theoretic setting (Lemma~\ref{lem:can_hom_is_iso_for_mon}) one has the
\begin{proposition}\label{prop:mult_alg_of_a_monoid}
Let $\mathsf{M}=(M,\mu,\eta)$ be a monoid in $\mathbb{C}$. Then, $\mathcal{M}_{\mathbb{C},(M,\mu)}$ is the underlying homomorphism of semigroups of an isomorphism of monoids  $\mathcal{M}_{\mathbb{C},\mathsf{M}}\colon \mathsf{Conv}_I(\mathsf{M}) \simeq \mathsf{Mult}_{\mathbb{C}}(|\mathsf{M}|)$. In particular a monoid object has only inner multipliers.
\end{proposition}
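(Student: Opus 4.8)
The plan is to show that $\mathcal{M}_{\mathbb{C},\mathsf{M}}$ is a \emph{bijective} homomorphism of monoids; since a bijective monoid homomorphism is automatically an isomorphism, this suffices, and surjectivity will simultaneously establish the ``in particular'' clause that every multiplier is inner. By Proposition~\ref{prop:hom_M} the map $\mathcal{M}_{\mathbb{C},(M,\mu)}\colon \mathsf{Conv}_I(M,\mu)\to(\mathit{Mult}_{\mathbb{C}}(|\mathsf{M}|),\star)$ is already a homomorphism of semigroups, so the first thing to check is that it carries the unit $\eta$ of $\mathsf{Conv}_I(\mathsf{M})$ to the unit $(id_M,id_M)$ of $\mathsf{Mult}_{\mathbb{C}}(|\mathsf{M}|)$. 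This is immediate from the two unit triangles defining $\mathsf{M}$: from $\mu\circ(\eta\otimes id_M)=\lambda_M$ one gets $L_\eta=\mu\circ(\eta\otimes id_M)\circ\lambda_M^{-1}=id_M$, and dually $R_\eta=\mu\circ(id_M\otimes\eta)\circ\rho_M^{-1}=id_M$. Hence $\mathcal{M}_{\mathbb{C},(M,\mu)}$ lifts to a homomorphism of monoids $\mathcal{M}_{\mathbb{C},\mathsf{M}}$.

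For injectivity I would use the computation~(\ref{eq:computations_for_concretization_homomorphism}): evaluating at $g=\eta$ gives $L_f\circ\eta=f\bullet_\mu\eta=f$, so the generalized element $f$ can be recovered from $L_f$ alone, whence $\mathcal{M}_{\mathbb{C},\mathsf{M}}(f)=\mathcal{M}_{\mathbb{C},\mathsf{M}}(f')$ forces $f=f'$. Alternatively, since $\mathsf{Conv}_I(\mathsf{M})$ carries a unit it is non-degenerate, so Lemma~\ref{lem:injectivity_of_can_map_for_left_and_right_non_degenerate_sem} applies directly.

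The heart of the argument, and the step I expect to be the main obstacle, is surjectivity. Given $(L,R)\in\mathit{Mult}_{\mathbb{C}}(|\mathsf{M}|)$ I would set $f:=L\circ\eta$ and $g:=R\circ\eta$ in $C(I,M)$ and prove three things. First, $L=L_f$: expanding $L_f=\mu\circ((L\circ\eta)\otimes id_M)\circ\lambda_M^{-1}=L\circ\mu\circ(\eta\otimes id_M)\circ\lambda_M^{-1}$, where the middle equality uses that $L$ is an endomorphism of right acts ($\mu\circ(L\otimes id_M)=L\circ\mu$), and then the left unit triangle collapses the tail to $id_M$. Second, $R=R_g$ by the mirror computation, using that $R$ is an endomorphism of left acts together with the right unit triangle. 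Third, $f=g$: here is where the linking (fibre-product) condition $\mu\circ(id_M\otimes L)=\mu\circ(R\otimes id_M)$ enters; precomposing it with $\eta\otimes\eta$ yields $\mu\circ(\eta\otimes f)=\mu\circ(g\otimes\eta)$, and a further precomposition with $\lambda_I^{-1}$ rewrites both sides as the convolution products $\eta\bullet_\mu f=f$ and $g\bullet_\mu\eta=g$, forcing $f=g$. Consequently $(L,R)=(L_f,R_f)=\mathcal{M}_{\mathbb{C},\mathsf{M}}(f)$, so the map is onto and every multiplier is inner.

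Putting these together, $\mathcal{M}_{\mathbb{C},\mathsf{M}}$ is a bijective homomorphism of monoids, hence the desired isomorphism $\mathsf{Conv}_I(\mathsf{M})\simeq\mathsf{Mult}_{\mathbb{C}}(|\mathsf{M}|)$. The only subtlety to watch is keeping the two sidedness conventions straight --- that $L$ is a right-act endomorphism while $R$ is a left-act endomorphism --- so that the correct unit triangle ($\lambda$ versus $\rho$) is invoked in each of the $L=L_f$ and $R=R_g$ computations.
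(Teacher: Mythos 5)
Your proposal is correct and follows essentially the same route as the paper: unit preservation via the unit triangles, injectivity by evaluating $L_f$ at $\eta$, and surjectivity by setting $f:=L\circ\eta$, $g:=R\circ\eta$, showing $L=L_f$ and $R=R_g$ from the (right-/left-)act equivariance of $L$ and $R$, and deducing $f=g$ from the linking relation precomposed with $\eta\otimes\eta$. Your phrasing of that last step through the convolution unit ($\eta\bullet_\mu f=f$, $g\bullet_\mu\eta=g$) is a slightly slicker packaging of the paper's explicit $\lambda_I=\rho_I$ computation, but it is the same argument.
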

\begin{proof}
Let $L\in \mathbf{Act}_{(M,\mu)}(\mathbb{C})((M,\mu),(M,\mu))$. Then, $L=L_{L\circ \eta}$. Indeed, 
\begin{equation}
\begin{array}{l}
M\xrightarrow{\lambda_M^{-1}}I\otimes M\xrightarrow{\eta\otimes id_M}M\otimes M\xrightarrow{L\otimes id_M}M\otimes M\xrightarrow{\mu}M\\
=M\xrightarrow{\lambda_M^{-1}}I\otimes M\xrightarrow{\eta\otimes id_M}M\otimes M\xrightarrow{\mu}M\xrightarrow{L}M\\
\mbox{(since $L$ is a right $(M,\mu)$-morphism)}\\
=M\xrightarrow{id_M}M\xrightarrow{L}M.\\
\mbox{(since $\eta$ is the unit of $\mathsf{M}$)}
\end{array}
\end{equation}
By symmetry for $R\in {}_{(M,\mu)}\mathbf{Act}(\mathbb{C})((M,\mu),(M,\mu))$, $R=R_{R\circ \eta}$. 
Now assume that $(L,R)\in \mathit{Mult}_{\mathbb{C}}(M,\mu)$.  Let us check that $R\circ \eta=L\circ \eta$. 
For each $f\in C(M,M)$, one has 
\begin{equation}
\begin{array}{lll}
\mu\circ ((f\otimes\eta)\otimes\eta)&=&\mu\circ (id_M\otimes \eta)\circ ((f\circ\eta)\otimes id_I)\\
&=&\rho_M\circ ((f\circ\eta)\otimes id_I)\\
&=&(f\circ \eta)\circ \rho_I\\
&&\mbox{(by naturality of $\rho$)}
\end{array}
\end{equation}
and 
\begin{equation}
\begin{array}{lll}
\mu\circ (\eta\otimes(f\circ \eta))&=&\mu\circ ( \eta\otimes id_M)\circ (id_I\otimes (f\circ\eta))\\
&=&\lambda_M\circ (id_I\otimes (f\circ\eta))\\
&=&(f\circ \eta)\circ \lambda_I.\\
&&\mbox{(by naturality of $\lambda$)}
\end{array}
\end{equation}
Furthermore, 
\begin{equation}
\begin{array}{lll}
\mu\circ ((R\circ\eta)\otimes\eta)&=&\mu\circ (R\otimes id_M)\circ (\eta\otimes\eta)\\
&=&\mu\circ (id_M\otimes L)\circ(\eta\otimes \eta)\\
&=&\mu\circ (\eta\otimes id_M)\circ (id_I\otimes (L\circ\eta))\\
&=&\lambda_M\circ (id_I\otimes (L\circ \eta))\\
&=&L\circ\eta\circ \lambda_I
\end{array}
\end{equation}
Therefore by the above $(R\circ \eta)\circ \rho_I=L\circ\eta\circ \lambda_I$ so that $R\circ\eta=L\circ\eta\circ \lambda_I\circ \rho_I^{-1}=L\circ \eta$. So with$f:=L\circ\eta=R\circ \eta$, one thus has $\mathcal{M}_{\mathbb{C},(M,\mu)}(f):=(L,R)$ which shows that $\mathcal{M}_{\mathbb{C},(M,\mu)}\colon C(I,M)\to \mathit{Mult}_{\mathbb{C}}(M,\mu)$ is onto. 

Let $f,g\in C(I,M)$ so that $L_f=L_g$. Then, because $\eta$ is the unit of $\mathsf{Conv}_I(\mathsf{M})$,  $f=\mu\circ(f\otimes id_M)\circ \lambda_M^{-1}\circ\eta=L_f\circ \eta=L_g\circ \eta=\mu\circ(g\otimes id_M)\circ \lambda_M^{-1}\circ\eta=g$. Assume that $R_f=R_g$. Then, $f=\mu\circ (id_M\otimes f)\circ \rho_M^{-1}\circ\eta=R_f\circ \eta=R_g\circ\eta=\mu\circ (id_M\otimes g)\circ \rho_M^{-1}\circ \eta=g$. 

As a consequence $\mathcal{M}_{\mathbb{C},(M,\mu)}$ is also one-to-one, and hence provides an isomorphism of semigroups $\mathsf{Conv}_I(M,\mu)\to (\mathit{Mult}(M,\mu),\star)$. As, $\mathcal{M}_{\mathbb{C},(M,\mu)}(\eta)=(L_\eta,R_{\eta})=(id_M,id_M)$  since $L_\eta=\mu\circ (\eta\otimes id_M)\circ \lambda_M^{-1}=\lambda_M\circ\lambda_M^{-1}=id_M$ and $R_\eta=\mu\circ (id_M\otimes \eta)\circ\rho_M^{-1}=\rho_M\circ \rho_M^{-1}=id_M$, $\mathcal{M}_{\mathbb{C},(M,\mu)}$ is an isomorphism of monoids from $\mathsf{Conv}_I(\mathsf{M})$ to $\mathsf{Mult}_{\mathbb{C}}(M,\mu)$. 
\end{proof}

\subsection{Comultipliers by dualization}\label{sec:comult}

By a simple dualization process, that is, essentially by replacing a category by its opposite, one now summarizes without proofs some important results about coactions of cosemigroups, deduced from that of actions of semigroups and one introduces the notion of comultipliers. 

Let $\mathsf{S}:=(S,\delta)$ be a cosemigroup in $\mathbb{C}$, that is, a semigroup in $\mathbb{C}^{op}$. A {\em left} (resp. {\em right}) {\em $\mathsf{S}$-coact} $(c,\beta)$ (resp. $(c,\tau)$) is an object of ${}_{\mathsf{S}}Act(\mathbb{C}^{op})$ (resp. $Act_{\mathsf{S}}(\mathbb{C}^{op})$). In what follows $\beta$ (resp. $\tau$) is referred to as a {\em left} (resp. {\em right}) {\em $\mathsf{S}$-coaction}. 

\begin{example}
By coassociativity, $(C,\delta)$ is both a left and a right $\mathsf{S}$-coact. 
\end{example}

Now on defines the category ${}_{\mathsf{S}}\mathbf{Coact}(\mathbb{C}):={}_{\mathsf{S}}\mathbf{Act}(\mathbb{C}^{op})^{op}$ (respectively,  $\mathbf{Coact}_{\mathsf{S}}(\mathbb{C}):=\mathbf{Act}_{\mathsf{S}}(\mathbb{C}^{op})^{op}$) of left (resp. right) $\mathsf{S}$-coacts.

Let $\mathsf{S}=(S,\delta)$ and $\mathsf{S}'=(S',\delta')$ be cosemigroups in $\mathbb{C}$. A {\em $\mathsf{S}\mhyphen\mathsf{S}'$-bi-coact} is a $\mathsf{S}\mhyphen\mathsf{S}'$-biact in $\mathbb{C}^{op}$, and one defines ${}_{\mathsf{S}}\mathbf{Coact}_{\mathsf{S}'}(\mathbb{C}):={}_{\mathsf{S}}\mathbf{Act}_{\mathsf{S}'}(\mathbb{C}^{op})^{op}$. When $\mathsf{S}=\mathsf{S}'$, a $\mathsf{S}\mhyphen\mathsf{S}'$-bi-coact is simply called a {\em (two-sided) $\mathsf{S}$-bi-coact}. 

\begin{example}
$(S,\delta,\delta)$ is a $\mathsf{S}$-bi-coact.
\end{example}

\begin{lemma}
Let $\mathsf{S}=(S,\delta)$ be a cosemigroup in $\mathbb{C}$. Then, $\mathbf{Coact}_{\mathsf{S}}(\mathbb{C})={}_{\mathsf{S}}\mathbf{Coact}(\mathbb{C}^{t})$.
\end{lemma}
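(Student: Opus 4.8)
The statement to prove is Lemma asserting that for a cosemigroup $\mathsf{S}=(S,\delta)$ in $\mathbb{C}$, one has $\mathbf{Coact}_{\mathsf{S}}(\mathbb{C})={}_{\mathsf{S}}\mathbf{Coact}(\mathbb{C}^{t})$. The plan is to reduce everything to the already-established Lemma~\ref{lem:left_act_is_right_act_for_transpose}, which handles the analogous statement for (ordinary) acts rather than coacts, by systematically unwinding the definitions of the coact categories in terms of opposite categories.

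First I would expand both sides using the defining equations from Section~\ref{sec:comult}. By definition $\mathbf{Coact}_{\mathsf{S}}(\mathbb{C}):=\mathbf{Act}_{\mathsf{S}}(\mathbb{C}^{op})^{op}$ and ${}_{\mathsf{S}}\mathbf{Coact}(\mathbb{C}^{t}):={}_{\mathsf{S}}\mathbf{Act}((\mathbb{C}^{t})^{op})^{op}$. Taking the outer opposite of both sides, it therefore suffices to prove the equality $\mathbf{Act}_{\mathsf{S}}(\mathbb{C}^{op})={}_{\mathsf{S}}\mathbf{Act}((\mathbb{C}^{t})^{op})$ of categories of ordinary acts. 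Here $\mathsf{S}=(S,\delta)$ is read as a semigroup object in $\mathbb{C}^{op}$, which is exactly the sense in which it figures in Lemma~\ref{lem:left_act_is_right_act_for_transpose}.

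Second I would apply Lemma~\ref{lem:left_act_is_right_act_for_transpose} to the monoidal category $\mathbb{D}:=\mathbb{C}^{op}$ and the semigroup object $\mathsf{S}$ in $\mathbb{D}$. That lemma gives $\mathbf{Act}_{\mathsf{S}}(\mathbb{D})={}_{\mathsf{S}}\mathbf{Act}(\mathbb{D}^{t})$, i.e. $\mathbf{Act}_{\mathsf{S}}(\mathbb{C}^{op})={}_{\mathsf{S}}\mathbf{Act}((\mathbb{C}^{op})^{t})$. Comparing with the target of the previous paragraph, the whole lemma then comes down to the purely formal identity of monoidal categories $(\mathbb{C}^{op})^{t}=(\mathbb{C}^{t})^{op}$, i.e. that transpose and opposite commute as operations on monoidal categories. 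I expect this compatibility to be the one genuinely calculational point: I would verify it by checking the two constructions agree on the underlying category (both give $C^{op}$), on the tensor (both send $(f,g)$ to the $\otimes$ of the arrows in swapped order, with arrows reversed), on the unit ($I$ in each case), and on the three coherence constraints $\alpha,\lambda,\rho$, where the definitions of $(-)^{t}$ and $(-)^{op}$ from Section~\ref{sec:prerequisites} each prescribe an inversion and a swap that combine to the same result independently of the order in which they are performed.

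The main obstacle, such as it is, lies entirely in bookkeeping the coherence isomorphisms through the two operations: under transpose, $\alpha^{t}$ is built from $\alpha^{-1}$ together with the argument swap $a,b,c\mapsto c,b,a$, while under opposite, $\alpha$ is replaced by $\alpha^{-1}$; one must confirm that composing these two prescriptions in either order yields the same natural isomorphism, and likewise that the interchange of $\lambda$ and $\rho$ effected by transpose is compatible with the inversion effected by opposite. Once $(\mathbb{C}^{op})^{t}=(\mathbb{C}^{t})^{op}$ is in hand, substituting back gives ${}_{\mathsf{S}}\mathbf{Act}((\mathbb{C}^{op})^{t})={}_{\mathsf{S}}\mathbf{Act}((\mathbb{C}^{t})^{op})$, hence $\mathbf{Act}_{\mathsf{S}}(\mathbb{C}^{op})={}_{\mathsf{S}}\mathbf{Act}((\mathbb{C}^{t})^{op})$, and taking opposites of both categories recovers the desired $\mathbf{Coact}_{\mathsf{S}}(\mathbb{C})={}_{\mathsf{S}}\mathbf{Coact}(\mathbb{C}^{t})$. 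This is the dual, via $\mathbb{C}\mapsto\mathbb{C}^{op}$, of Lemma~\ref{lem:left_act_is_right_act_for_transpose}, which is exactly the kind of statement the author flags as obtained ``by a simple dualization process''.
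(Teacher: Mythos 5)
Your argument is correct and is precisely the ``simple dualization process'' the paper invokes: the paper states this lemma without proof, and your reduction to Lemma~\ref{lem:left_act_is_right_act_for_transpose} applied to $\mathbb{C}^{op}$, together with the routine verification that $(\mathbb{C}^{op})^{t}=(\mathbb{C}^{t})^{op}$, is exactly the intended justification. Nothing is missing.
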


%

\begin{proposition}\label{prop:tens_prod_of_right_and_left_coact}
Let $\mathsf{S}=(S,\delta)$ and $\mathsf{S}'=(S',\delta')$ be  cosemigroups in a monoidal category $\mathbb{C}$. 
 $\otimes\colon C\times C\to C$ lifts to   a functor ${}_{\mathsf{S}}\mathbf{Coact}(\mathbb{C})\times \mathbf{Coact}_{\mathsf{S}'}(\mathbb{C})\to {}_{\mathsf{S}}\mathbf{Coact}_{\mathsf{S}'}(\mathbb{C})$, that is, such that  the following diagram commutes. 
\begin{equation}
\xymatrix{
\ar[d]_{{}_{\mathsf{S}}|-|^{op}\times|-|_{\mathsf{S}'}^{op}}{}_{\mathsf{S}}\mathbf{Coact}(\mathbb{C})\times \mathbf{Coact}_{\mathsf{S}'}(\mathbb{C})\ar[r]&{}_{\mathsf{S}}\mathbf{Coact}_{\mathsf{S}'}(\mathbb{C})\ar[d]^{{}_{\mathsf{S}}|-|_{\mathsf{S}'}^{op}}\\
C\times C \ar[r]_{\otimes}& C
}
\end{equation}

\end{proposition}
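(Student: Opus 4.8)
The plan is to deduce everything by dualization, applying Proposition~\ref{prop:tens_prod_of_right_and_left_act} inside the opposite monoidal category $\mathbb{C}^{op}$ and then passing to opposite categories and functors. Recall that a cosemigroup $\mathsf{S}=(S,\delta)$ (resp. $\mathsf{S}'=(S',\delta')$) in $\mathbb{C}$ is by definition a semigroup in $\mathbb{C}^{op}$, and that the tensor of $\mathbb{C}^{op}=(C^{op},\otimes^{op},I,\alpha^{-1},\lambda^{-1},\rho^{-1})$ is $\otimes^{op}\colon C^{op}\times C^{op}=(C\times C)^{op}\to C^{op}$, the opposite of $\otimes$.

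First I would invoke Proposition~\ref{prop:tens_prod_of_right_and_left_act} with $\mathbb{C}$ replaced by $\mathbb{C}^{op}$ and with $\mathsf{S},\mathsf{S}'$ regarded as semigroups of $\mathbb{C}^{op}$. This produces a functor $G\colon {}_{\mathsf{S}}\mathbf{Act}(\mathbb{C}^{op})\times\mathbf{Act}_{\mathsf{S}'}(\mathbb{C}^{op})\to {}_{\mathsf{S}}\mathbf{Act}_{\mathsf{S}'}(\mathbb{C}^{op})$ lifting the tensor $\otimes^{op}$ of $\mathbb{C}^{op}$, that is, with ${}_{\mathsf{S}}|-|_{\mathsf{S}'}\circ G=\otimes^{op}\circ({}_{\mathsf{S}}|-|\times|-|_{\mathsf{S}'})$, where the three forgetful functors are those into $C^{op}$ attached to $\mathbb{C}^{op}$. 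I then pass to the opposite functor $G^{op}$. By the elementary identity $(A\times B)^{op}=A^{op}\times B^{op}$ together with the defining equalities ${}_{\mathsf{S}}\mathbf{Coact}(\mathbb{C})={}_{\mathsf{S}}\mathbf{Act}(\mathbb{C}^{op})^{op}$, $\mathbf{Coact}_{\mathsf{S}'}(\mathbb{C})=\mathbf{Act}_{\mathsf{S}'}(\mathbb{C}^{op})^{op}$ and ${}_{\mathsf{S}}\mathbf{Coact}_{\mathsf{S}'}(\mathbb{C})={}_{\mathsf{S}}\mathbf{Act}_{\mathsf{S}'}(\mathbb{C}^{op})^{op}$, the functor $G^{op}$ has precisely domain ${}_{\mathsf{S}}\mathbf{Coact}(\mathbb{C})\times\mathbf{Coact}_{\mathsf{S}'}(\mathbb{C})$ and codomain ${}_{\mathsf{S}}\mathbf{Coact}_{\mathsf{S}'}(\mathbb{C})$, and this $G^{op}$ is the required lift of $\otimes$.

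It then remains to verify the commuting square, which I would obtain by dualizing the displayed equation for $G$: applying $(-)^{op}$ and using $(F\circ H)^{op}=F^{op}\circ H^{op}$ and $(F\times H)^{op}=F^{op}\times H^{op}$ gives $({}_{\mathsf{S}}|-|_{\mathsf{S}'})^{op}\circ G^{op}=(\otimes^{op})^{op}\circ(({}_{\mathsf{S}}|-|)^{op}\times(|-|_{\mathsf{S}'})^{op})$. Here the opposite of each forgetful functor $\mathbf{Act}(\mathbb{C}^{op})\to C^{op}$ is exactly the forgetful functor $\mathbf{Coact}(\mathbb{C})\to(C^{op})^{op}=C$ written ${}_{\mathsf{S}}|-|^{op}$, $|-|^{op}_{\mathsf{S}'}$, ${}_{\mathsf{S}}|-|^{op}_{\mathsf{S}'}$ in the statement, while $(\otimes^{op})^{op}=\otimes$ under the canonical identifications $((C\times C)^{op})^{op}=C\times C$ and $(C^{op})^{op}=C$; this is the diagram to be proved. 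The only genuine work is this bookkeeping of opposites: no new coherence computation is needed, since every associativity or unit constraint invoked is merely the inverse of one already checked in Proposition~\ref{prop:tens_prod_of_right_and_left_act}. I expect this double-opposite tracking, and in particular keeping straight that the tensor of $\mathbb{C}^{op}$ is $\otimes^{op}$ rather than the transpose, so that $(\otimes^{op})^{op}$ returns $\otimes$, to be the only delicate point. If an explicit description is wanted, unwinding $G^{op}$ yields coaction morphisms that are the evident duals of the formulas $\gamma_{\rho'}$ and ${}_{\gamma}\rho'$ of Proposition~\ref{prop:tens_prod_of_right_and_left_act}.
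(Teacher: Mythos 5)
Your proposal is correct and is precisely the argument the paper intends: Section~\ref{sec:comult} explicitly states that all results there are obtained ``by a simple dualization process'' from the corresponding statements about actions, so your careful application of Proposition~\ref{prop:tens_prod_of_right_and_left_act} in $\mathbb{C}^{op}$ followed by passing to opposite categories is exactly the (unwritten) proof. The bookkeeping of opposites, including the identities $(A\times B)^{op}=A^{op}\times B^{op}$ and $(\otimes^{op})^{op}=\otimes$, is handled correctly.
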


\begin{lemma}\label{lem:mu_is_bi_coact_morphism}
$\delta\in {}_{\mathsf{S}}\mathbf{Coact}_{\mathsf{S}}(\mathbb{C})((S,\delta),(S,\delta)\otimes (S,\delta))$ for a cosemigroup $\mathsf{S}=(S,\delta)$ in $\mathbb{C}$.
\end{lemma}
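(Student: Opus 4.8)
The plan is to obtain this result purely by dualization, invoking Lemma~\ref{lem:mu_is_bi_act_morphism} in the monoidal category $\mathbb{C}^{op}$. Recall that, by definition, a cosemigroup $\mathsf{S}=(S,\delta)$ in $\mathbb{C}$ is nothing but a semigroup in $\mathbb{C}^{op}$ whose multiplication is the $\mathbb{C}^{op}$-morphism $\delta\colon S\otimes^{op} S\to S$ (equivalently the $\mathbb{C}$-morphism $\delta\colon S\to S\otimes S$), and that the category of two-sided $\mathsf{S}$-bi-coacts in $\mathbb{C}$ is by definition ${}_{\mathsf{S}}\mathbf{Coact}_{\mathsf{S}}(\mathbb{C})={}_{\mathsf{S}}\mathbf{Act}_{\mathsf{S}}(\mathbb{C}^{op})^{op}$.

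First I would apply Lemma~\ref{lem:mu_is_bi_act_morphism}, with $\mathbb{C}$ replaced by $\mathbb{C}^{op}$ and the semigroup $(S,\mu)$ replaced by the semigroup $(S,\delta)$ of $\mathbb{C}^{op}$. This yields $\delta\in {}_{\mathsf{S}}\mathbf{Act}_{\mathsf{S}}(\mathbb{C}^{op})((S,\delta)\otimes(S,\delta),(S,\delta,\delta))$, the tensor product here being that of bi-acts in $\mathbb{C}^{op}$ furnished by Proposition~\ref{prop:tens_prod_of_right_and_left_act} (read in $\mathbb{C}^{op}$), and the underlying morphism being $\delta$. Passing to opposite categories, that is, applying $(-)^{op}$, reverses the direction of this morphism, so that it becomes an element of ${}_{\mathsf{S}}\mathbf{Coact}_{\mathsf{S}}(\mathbb{C})((S,\delta,\delta),(S,\delta)\otimes(S,\delta))$ with the same underlying $\mathbb{C}$-morphism $\delta\colon S\to S\otimes S$. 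Writing $(S,\delta)$ for the bi-coact $(S,\delta,\delta)$, as in the statement, this is precisely the asserted membership.

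The only point requiring care is that the tensor product of coacts provided by Proposition~\ref{prop:tens_prod_of_right_and_left_coact} in $\mathbb{C}$ coincides, through the opposite functor, with the tensor product of acts of Proposition~\ref{prop:tens_prod_of_right_and_left_act} computed in $\mathbb{C}^{op}$: on underlying objects both reduce to $S\otimes S$, since $S\otimes^{op}S=S\otimes S$, and the left/right roles are preserved because ${}_{\mathsf{S}}\mathbf{Coact}(\mathbb{C})$ and $\mathbf{Coact}_{\mathsf{S}}(\mathbb{C})$ are by definition ${}_{\mathsf{S}}\mathbf{Act}(\mathbb{C}^{op})^{op}$ and $\mathbf{Act}_{\mathsf{S}}(\mathbb{C}^{op})^{op}$ respectively. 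This compatibility is exactly the dualization that defines the coact tensor, so no independent verification beyond matching the definitions is needed.

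The main, and essentially only, obstacle is therefore bookkeeping: one must keep the two distinct uses of ``opposite'' separate — the monoidal opposite $\mathbb{C}^{op}$ entering the very notion of cosemigroup and coaction, and the categorical opposite $(-)^{op}$ used to form the coact categories — and check that the composite duality reverses arrow direction exactly once while leaving the left/right labelling intact. No diagram chase is required here, since the (co)associativity content is entirely carried by Lemma~\ref{lem:mu_is_bi_act_morphism}, simply read in $\mathbb{C}^{op}$.
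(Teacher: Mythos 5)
Your proof is correct and is exactly the argument the paper intends: the entire subsection on comultipliers is introduced as being obtained ``by a simple dualization process,'' and this lemma is stated without proof precisely because it is Lemma~\ref{lem:mu_is_bi_act_morphism} read in $\mathbb{C}^{op}$ and transported through the defining identification ${}_{\mathsf{S}}\mathbf{Coact}_{\mathsf{S}}(\mathbb{C})={}_{\mathsf{S}}\mathbf{Act}_{\mathsf{S}}(\mathbb{C}^{op})^{op}$. Your bookkeeping of the two layers of ``opposite'' and of the direction reversal is accurate, so there is nothing to add.
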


Let $\mathsf{S}=(S,\delta)$ be a cosemigroup in $\mathbb{C}$.  Define $\mathit{Comult}_{\mathbb{C}}(\mathsf{S}):=\mathit{Mult}_{\mathbb{C}^{op}}(\mathsf{S})$. In other words, $$\mathit{Comult}_{\mathbb{C}}(\mathsf{S}):=\mathbf{Coact}_{\mathsf{S}}(\mathbb{C})((S,\delta),(S,\delta)){}_{\mathsf{g}_{\mathbb{C}^{op},\mathsf{S}}}\times_{\mathsf{d}_{\mathbb{C}^{op},\mathsf{S}}}{}_{\mathsf{S}}\mathbf{Coact}(\mathbb{C})((S,\delta),(S,\delta))$$
with $\mathsf{g}_{\mathbb{C}^{op},\mathsf{S}}(L)=\mathsf{g}(L):=(id_S\otimes L)\circ \delta$ and $\mathsf{d}_{\mathbb{C}^{op},\mathsf{S}}(R)= (R\otimes id_S)\circ \delta$. The members of $\mathit{Comult}_{\mathbb{C}}(\mathsf{S})$ are referred to as the {\em comultipliers} of $\mathsf{S}$. 

\begin{proposition}\label{prop:monoid_of_comultipliers_in_mon_cat}
Let $\mathsf{S}=(S,\delta)$ be a cosemigroup in $\mathbb{C}$.  Then, $\mathsf{Comult}_{\mathbb{C}}(\mathsf{S}):=(\mathit{Comult}_{\mathbb{C}}(\mathsf{S}),\star,(id_S,id_S))$ is a monoid under $(L',R')\star(L,R):=(L\circ L',R'\circ R)$. $\mathit{Comult}_{\mathbb{C}}(\mathsf{S})$ is in fact a submonoid of  $\mathbf{Coact}_{\mathsf{S}}(\mathbb{C})((S,\delta),(S,\delta))^{op}\times {}_{\mathsf{S}}\mathbf{Coact}(\mathbb{C})((S,\delta),(S,\delta))$ (product monoid). 
\end{proposition}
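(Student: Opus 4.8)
The plan is to obtain Proposition~\ref{prop:monoid_of_comultipliers_in_mon_cat} purely by dualization, exploiting the defining equation $\mathit{Comult}_{\mathbb{C}}(\mathsf{S}):=\mathit{Mult}_{\mathbb{C}^{op}}(\mathsf{S})$ rather than redoing any diagram chase. First I would invoke Proposition~\ref{prop:monoid_of_multipliers_in_mon_cat} applied to the monoidal category $\mathbb{C}^{op}$ and to the semigroup object $\mathsf{S}=(S,\delta)$ regarded as a semigroup in $\mathbb{C}^{op}$ (which is exactly what a cosemigroup in $\mathbb{C}$ is, by definition). That proposition already tells us that $\mathsf{Mult}_{\mathbb{C}^{op}}(\mathsf{S})=(\mathit{Mult}_{\mathbb{C}^{op}}(\mathsf{S}),\star,(id_S,id_S))$ is a monoid and a submonoid of $\mathbf{Act}_{\mathsf{S}}(\mathbb{C}^{op})((S,\delta),(S,\delta))\times {}_{\mathsf{S}}\mathbf{Act}(\mathbb{C}^{op})((S,\delta),(S,\delta))^{op}$, with multiplication $(L',R')\star(L,R):=(L'\circ_{\mathbb{C}^{op}} L, R\circ_{\mathbb{C}^{op}} R')$, where $\circ_{\mathbb{C}^{op}}$ is composition in $C^{op}$.

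The single real point then is to translate this statement from $\mathbb{C}^{op}$ back into $\mathbb{C}$, i.e.\ to track how passing to the opposite category reverses composition. The key observation is that $C^{op}(x,y)=C(y,x)$ as sets and that $g\circ_{\mathbb{C}^{op}}f=f\circ_{\mathbb{C}} g$. Consequently the hom-set $\mathbf{Act}_{\mathsf{S}}(\mathbb{C}^{op})((S,\delta),(S,\delta))$ is, as a set, the same as $\mathbf{Coact}_{\mathsf{S}}(\mathbb{C})((S,\delta),(S,\delta))$ by the very definition $\mathbf{Coact}_{\mathsf{S}}(\mathbb{C}):=\mathbf{Act}_{\mathsf{S}}(\mathbb{C}^{op})^{op}$, but the endomorphism monoid structure $(\,\cdot\,,\circ_{\mathbb{C}^{op}})$ on the former becomes the opposite monoid $(\,\cdot\,,\circ_{\mathbb{C}})^{op}$ on the latter, and symmetrically for the left-coact factor. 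I would therefore rewrite the multiplication $(L',R')\star(L,R):=(L'\circ_{\mathbb{C}^{op}}L,R\circ_{\mathbb{C}^{op}}R')=(L\circ_{\mathbb{C}}L',R'\circ_{\mathbb{C}}R)$, which is precisely the formula asserted in the statement. The ambient product monoid $\mathbf{Act}_{\mathsf{S}}(\mathbb{C}^{op})(\dots)\times {}_{\mathsf{S}}\mathbf{Act}(\mathbb{C}^{op})(\dots)^{op}$ likewise becomes $\mathbf{Coact}_{\mathsf{S}}(\mathbb{C})(\dots)^{op}\times {}_{\mathsf{S}}\mathbf{Coact}(\mathbb{C})(\dots)$ under this identification, matching the claimed submonoid.

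The one step requiring a little care, and the place I expect the only genuine friction, is bookkeeping the two simultaneous ``op'' reversals: once because we work in $\mathbb{C}^{op}$ (which flips $\circ$), and once because the coact categories are themselves defined as opposites of the $\mathbb{C}^{op}$-act categories. These two layers conspire so that the \emph{order} of the factors and the placement of the $(-)^{op}$ superscript land exactly as in the target statement; I would verify this by checking, on each of the two coordinates separately, that the composite identification sends the multiplier multiplication on $\mathbb{C}^{op}$ to the stated comultiplier multiplication, and that $(id_S,id_S)$ remains the identity (which is immediate since $id_S$ is self-dual). No diagram-level verification is needed: everything is a formal consequence of Proposition~\ref{prop:monoid_of_multipliers_in_mon_cat} together with the definitional equalities $\mathbf{Coact}_{\mathsf{S}}(\mathbb{C})=\mathbf{Act}_{\mathsf{S}}(\mathbb{C}^{op})^{op}$ and ${}_{\mathsf{S}}\mathbf{Coact}(\mathbb{C})={}_{\mathsf{S}}\mathbf{Act}(\mathbb{C}^{op})^{op}$, and the dualized descriptions of $\mathsf{g}_{\mathbb{C}^{op},\mathsf{S}}$ and $\mathsf{d}_{\mathbb{C}^{op},\mathsf{S}}$ already recorded before the statement.
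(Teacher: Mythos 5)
Your proposal is correct and is exactly the route the paper intends: Section~\ref{sec:comult} explicitly presents the comultiplier results ``by a simple dualization process \dots without proofs,'' deduced from the semigroup case via the definition $\mathit{Comult}_{\mathbb{C}}(\mathsf{S}):=\mathit{Mult}_{\mathbb{C}^{op}}(\mathsf{S})$ and Proposition~\ref{prop:monoid_of_multipliers_in_mon_cat} applied to $\mathbb{C}^{op}$. Your bookkeeping of the two composition reversals (one from $\mathbb{C}^{op}$, one from the $(-)^{op}$ in the definition of the coact categories) is accurate and lands on the stated formula and ambient product monoid.
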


In what follows $\mathsf{Comult}_{\mathbb{C}}(\mathsf{S})$ is referred to as the {\em comultiplier monoid} of $\mathsf{S}$ in $\mathbb{C}$.

\begin{lemma}
Let $\mathsf{C}=(S,\delta)$ be a cosemigroup in $\mathbb{C}$. Let $f\in C(S,I)$. 
\begin{enumerate}
\item $L_f:=S\xrightarrow{\delta}S\otimes S\xrightarrow{f\otimes id_S}I\otimes S\xrightarrow{\lambda_S}S$. Then, $L_f\in \mathbf{Coact}_{\mathsf{S}}(\mathbb{C})((S,\delta),(S,\delta))$. 
\item  $R_f:=S\xrightarrow{\delta}S\otimes S\xrightarrow{id_S\otimes f}S\otimes I\xrightarrow{\rho_S}S$. Then, $R_f\in {}_{\mathsf{S}}\mathbf{Coact}(\mathbb{C})((S,\delta),(S,\delta))$. 
\item Moreover $(L_f,R_f)\in \mathit{Comult}_{\mathbb{C}}(\mathsf{S})$.
\end{enumerate}
\end{lemma}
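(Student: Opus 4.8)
The plan is to prove all three assertions at once by pure dualization, exploiting the definitional identity $\mathit{Comult}_{\mathbb{C}}(\mathsf{S}) = \mathit{Mult}_{\mathbb{C}^{op}}(\mathsf{S})$ together with ${}_{\mathsf{S}}\mathbf{Coact}(\mathbb{C}) = {}_{\mathsf{S}}\mathbf{Act}(\mathbb{C}^{op})^{op}$ and $\mathbf{Coact}_{\mathsf{S}}(\mathbb{C}) = \mathbf{Act}_{\mathsf{S}}(\mathbb{C}^{op})^{op}$. Regarding the cosemigroup $\mathsf{S} = (S,\delta)$ as the semigroup $(S,\delta)$ in the monoidal category $\mathbb{C}^{op}$, a morphism $f \in C(S, I)$ is precisely a generalized element $f \in C^{op}(I,S)$ of that semigroup, so the three earlier results on inner translations — the lemma producing $L_f \in \mathbf{Act}_{\mathsf{S}}$, its symmetric counterpart producing $R_f \in {}_{\mathsf{S}}\mathbf{Act}$, and Lemma~\ref{lem:inner_multiplier} producing $(L_f,R_f) \in \mathit{Mult}$ — all apply verbatim in $\mathbb{C}^{op}$. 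The only genuine content to verify is that the morphisms $L_f$ and $R_f$ displayed in the statement are exactly the images in $C$ of the inner translations computed in $\mathbb{C}^{op}$.

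This bookkeeping is where all the care goes. Recall $\mathbb{C}^{op} = (C^{op}, \otimes^{op}, I, \alpha^{-1}, \lambda^{-1}, \rho^{-1})$, so the left unitor of $\mathbb{C}^{op}$ is $\lambda^{-1}$, its inverse as a morphism of $C^{op}$ corresponds to $\lambda$ in $C$, and both the tensor and the composition reverse direction. First I would unwind the inner left translation of $(S,\delta)$ in $\mathbb{C}^{op}$: by the defining formula it reads $S \to I \otimes^{op} S \to S \otimes^{op} S \to S$ in $C^{op}$, built from $(\lambda^{op}_S)^{-1}$, $f \otimes^{op} id_S$ and $\delta$. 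Reversing each arrow together with the order of composition turns this into $S \xrightarrow{\delta} S \otimes S \xrightarrow{f \otimes id_S} I \otimes S \xrightarrow{\lambda_S} S$ in $C$, which is exactly the $L_f$ of item~(1). The identical unwinding of the inner right translation, with $\rho^{-1}$ in place of $\lambda^{-1}$, yields the $R_f$ of item~(2).

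With this identification in place, item~(1) follows because $L_f$ is a right $\mathsf{S}$-act morphism in $\mathbb{C}^{op}$, hence an element of $\mathbf{Act}_{\mathsf{S}}(\mathbb{C}^{op})((S,\delta),(S,\delta)) = \mathbf{Coact}_{\mathsf{S}}(\mathbb{C})((S,\delta),(S,\delta))$; item~(2) follows symmetrically since $R_f$ is a left $\mathsf{S}$-act morphism in $\mathbb{C}^{op}$; and item~(3) is Lemma~\ref{lem:inner_multiplier} read in $\mathbb{C}^{op}$, giving $(L_f,R_f) \in \mathit{Mult}_{\mathbb{C}^{op}}(\mathsf{S}) = \mathit{Comult}_{\mathbb{C}}(\mathsf{S})$. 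The main obstacle is not conceptual but notational: one must be scrupulous about the direction-reversal of composites and the replacement of $\lambda, \rho, \alpha$ by their inverses, since a single misplaced inverse would break the matching between the displayed $L_f, R_f$ and the $\mathbb{C}^{op}$-translations. If one prefers to sidestep the duality bookkeeping entirely, an alternative is to reprove items~(1)--(3) directly by dualizing the commuting diagrams in the proofs of the three semigroup lemmas — flipping every arrow and interchanging $\mu$ with $\delta$ — but the duality argument is shorter and conceptually transparent.
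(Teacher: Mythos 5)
Your proof is correct and follows exactly the route the paper intends: the lemma appears in the subsection that the paper explicitly introduces as results ``summarized without proofs'' obtained ``by a simple dualization process,'' i.e.\ by reading the three inner-translation lemmas for semigroups in $\mathbb{C}^{op}$ and unwinding $\mathit{Comult}_{\mathbb{C}}(\mathsf{S})=\mathit{Mult}_{\mathbb{C}^{op}}(\mathsf{S})$. Your careful check that the displayed $L_f$ and $R_f$ really are the $C$-images of the $\mathbb{C}^{op}$-translations (using that the unitors of $\mathbb{C}^{op}$ are $\lambda^{-1},\rho^{-1}$, so their inverses give back $\lambda_S,\rho_S$) is precisely the bookkeeping the paper leaves implicit.
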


Let $\mathsf{S}=(S,\delta)$ be a cosemigroup in $\mathbb{C}$. Let $\mathcal{C}_{\mathbb{C},\mathsf{S}}:=\mathcal{M}_{\mathbb{C}^{op},\mathsf{S}}\colon C(S,I)\to \mathit{Comult}_{\mathbb{C}}(\mathsf{S})$ be given by $\mathcal{C}_{\mathbb{C},\mathsf{S}}(f):=(L_f,R_f)$. The image of $\mathcal{C}_{\mathbb{C},\mathsf{S}}$ into $\mathit{Comult}_{\mathbb{C}}(\mathsf{S})$ is the set $\mathit{InnComult}_{\mathbb{C}}(\mathsf{S})$ of all {\em inner comultipliers} of $\mathsf{S}$. 

Recall that $C(S,I)=C^{op}(I,S)$ becomes a semigroup  under convolution which here means $g\bullet_{\delta} f:=S\xrightarrow{\delta}S\otimes S\xrightarrow{g\otimes f}I\otimes I\xrightarrow{\lambda_I=\rho_I}I$.

\begin{proposition}
Let $\mathsf{S}=(S,\delta)$ be a cosemigroup in $\mathbb{C}$.   $\mathcal{C}_{\mathbb{C},\mathsf{S}}\colon (C(S,I),\bullet_{\delta})\to (\mathit{Comult}_{\mathbb{C}}(\mathsf{S}),\star)$ is a homomorphism of semigroups and  $\mathit{InnComult}_{\mathbb{C}}(\mathsf{S})$ is a sub-semigroup of $(\mathit{Comult}_{\mathbb{C}}(\mathsf{S}),\star)$. 
\end{proposition}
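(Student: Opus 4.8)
The plan is to obtain this statement as the $\mathbb{C}^{op}$-instance of Proposition~\ref{prop:hom_M}, rather than to repeat the diagram chase. By definition a cosemigroup $\mathsf{S}=(S,\delta)$ in $\mathbb{C}$ is precisely a semigroup in $\mathbb{C}^{op}$, and $\mathcal{C}_{\mathbb{C},\mathsf{S}}$ was set to be $\mathcal{M}_{\mathbb{C}^{op},\mathsf{S}}$, while $\mathit{Comult}_{\mathbb{C}}(\mathsf{S})$ was defined as $\mathit{Mult}_{\mathbb{C}^{op}}(\mathsf{S})$. Proposition~\ref{prop:hom_M}, read in the monoidal category $\mathbb{C}^{op}$ and applied to the semigroup $\mathsf{S}$ therein, asserts that $\mathcal{M}_{\mathbb{C}^{op},\mathsf{S}}\colon \mathsf{Conv}_I(\mathsf{S})\to(\mathit{Mult}_{\mathbb{C}^{op}}(\mathsf{S}),\star)$ is a homomorphism of semigroups. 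Transporting this back along the definitional identifications yields exactly the claim, so the whole work reduces to checking that the three objects involved (source semigroup, target monoid, and canonical map) translate correctly under $\mathbb{C}\rightsquigarrow\mathbb{C}^{op}$.

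For the source, I would record that $C^{op}(I,S)=C(S,I)$ and that the convolution product of $\mathsf{Conv}_I(\mathsf{S})$ computed in $\mathbb{C}^{op}$, namely $\delta\circ^{op}(g\otimes^{op}f)\circ^{op}(\lambda^{op}_I)^{-1}$, unwinds in $C$ to $\lambda_I\circ(g\otimes f)\circ\delta$ once one remembers that the left unit of $\mathbb{C}^{op}$ is $\lambda^{-1}$ (so its inverse at $I$ is $\lambda_I$) and that $\circ^{op}$ reverses the order of composition. This is precisely the product $\bullet_\delta$ recalled just before the statement. For the target, the multiplier composition $(L',R')\star(L,R)=(L'\circ L,R\circ R')$ of Proposition~\ref{prop:monoid_of_multipliers_in_mon_cat}, taken with $\circ=\circ^{op}$, becomes $(L\circ L',R'\circ R)$, which is exactly the comultiplier law of Proposition~\ref{prop:monoid_of_comultipliers_in_mon_cat}. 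Finally, that the $L_f,R_f$ of the preceding lemma agree with the inner left/right translations produced by $\mathcal{M}_{\mathbb{C}^{op},\mathsf{S}}$ is the same arrow-reversal check. With these identifications in place the homomorphism property is inherited verbatim.

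If one instead prefers a self-contained argument, I would run the dual of the computation in the proof of Proposition~\ref{prop:hom_M}: using coassociativity of $\delta$ in place of associativity of $\mu$, naturality of $\alpha$, and naturality and coherence of $\lambda$, one shows $L_{g\bullet_\delta f}=L_f\circ L_g$ and, symmetrically, $R_{g\bullet_\delta f}=R_g\circ R_f$; pairing these gives $\mathcal{C}_{\mathbb{C},\mathsf{S}}(g\bullet_\delta f)=\mathcal{C}_{\mathbb{C},\mathsf{S}}(g)\star\mathcal{C}_{\mathbb{C},\mathsf{S}}(f)$ with the comultiplier $\star$. The second assertion is then immediate: $\mathit{InnComult}_{\mathbb{C}}(\mathsf{S})$ is by definition the image of the homomorphism $\mathcal{C}_{\mathbb{C},\mathsf{S}}$, and the image of a semigroup homomorphism is a sub-semigroup of its codomain. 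The only genuine difficulty is bookkeeping: keeping track of which unit constraint ($\lambda$ versus $\lambda^{-1}$) and which order of composition one lands on after dualizing, since a slip there would flip $L_f\circ L_g$ into $L_g\circ L_f$ and break the match with the $\star$ of Proposition~\ref{prop:monoid_of_comultipliers_in_mon_cat}.
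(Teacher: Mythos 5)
Your proposal is correct and follows exactly the route the paper intends: Section~\ref{sec:comult} states this proposition without proof, as an instance of Proposition~\ref{prop:hom_M} applied in $\mathbb{C}^{op}$, and your careful unwinding of the unit constraint $\lambda^{-1}$ of $\mathbb{C}^{op}$, the reversal of composition in the $\star$-law, and the identification of $\bullet_\delta$ and of the inner translations is precisely the bookkeeping that dualization requires. The final observation that $\mathit{InnComult}_{\mathbb{C}}(\mathsf{S})$ is the image of a semigroup homomorphism, hence a sub-semigroup, also matches the paper's treatment of the primal case.
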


Let $\mathsf{S}=(S,\delta)$ be a cosemigroup object in $\mathbb{C}$. Let $f,g\in C(S,I)$. Then, 
\begin{equation}
g\circ L_f=f\bullet_\delta g\ \mbox{and}\ g\circ R_f=g\bullet_\delta f.
\end{equation}
As a consequence one has the 
\begin{lemma}\label{lem:injectivity_of_can_map_for_left_and_right_non_degenerate_sem_for_cosem}
Let $\mathsf{S}=(S,\delta)$ be a cosemigroup object in $\mathbb{C}$. Assume that $(C(S,I),\bullet_{\delta})$ is non-degenerate (see Definition~\ref{def:non-degeneracy_of_a_semigroup}). Then, $\mathcal{C}_{\mathbb{C},\mathsf{S}}\colon  (C(S,I),\bullet_{\delta})\to (\mathit{Comult}_{\mathbb{C}}(\mathsf{S}),\star)$  is one-to-one. 
\end{lemma}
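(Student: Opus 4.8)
The plan is to exploit the fact that the entire comultiplier apparatus has been set up precisely by passing to the opposite monoidal category, so that the assertion is nothing but Lemma~\ref{lem:injectivity_of_can_map_for_left_and_right_non_degenerate_sem} read inside $\mathbb{C}^{op}$. Indeed, by construction a cosemigroup $\mathsf{S}=(S,\delta)$ in $\mathbb{C}$ is a semigroup in $\mathbb{C}^{op}$, one has $\mathit{Comult}_{\mathbb{C}}(\mathsf{S})=\mathit{Mult}_{\mathbb{C}^{op}}(\mathsf{S})$, and $\mathcal{C}_{\mathbb{C},\mathsf{S}}=\mathcal{M}_{\mathbb{C}^{op},\mathsf{S}}$. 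So no new work beyond matching up hypotheses is really required.

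First I would verify that the convolution semigroup of $\mathsf{S}$ computed in $\mathbb{C}^{op}$ is exactly $(C(S,I),\bullet_{\delta})$. Since the monoidal unit of $\mathbb{C}^{op}$ is again $I$ and $C^{op}(I,S)=C(S,I)$, unwinding the definition of $\mathsf{Conv}_{I}$ in $\mathbb{C}^{op}$---with composition reversed, the left unit constraint replaced by $\lambda_I^{-1}$, and the semigroup multiplication taken to be $\delta$---recovers precisely the product $g\bullet_{\delta}f=\lambda_I\circ(g\otimes f)\circ\delta$ recalled just above the statement. Consequently the hypothesis that $(C(S,I),\bullet_{\delta})$ is non-degenerate is literally the hypothesis that $\mathsf{Conv}_I(\mathsf{S})$ is non-degenerate when $\mathsf{S}$ is viewed as a semigroup in $\mathbb{C}^{op}$. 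Applying Lemma~\ref{lem:injectivity_of_can_map_for_left_and_right_non_degenerate_sem} to the monoidal category $\mathbb{C}^{op}$ then gives immediately that $\mathcal{M}_{\mathbb{C}^{op},\mathsf{S}}=\mathcal{C}_{\mathbb{C},\mathsf{S}}$ is one-to-one.

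If I preferred to avoid invoking the opposite category explicitly, I would instead argue directly from the two identities $g\circ L_f=f\bullet_{\delta}g$ and $g\circ R_f=g\bullet_{\delta}f$ displayed just before the statement. Suppose $\mathcal{C}_{\mathbb{C},\mathsf{S}}(f)=\mathcal{C}_{\mathbb{C},\mathsf{S}}(g)$, i.e.\ $L_f=L_g$ and $R_f=R_g$. Post-composing the equality $L_f=L_g$ with an arbitrary $h\in C(S,I)$ yields $f\bullet_{\delta}h=g\bullet_{\delta}h$ for every $h$, while post-composing $R_f=R_g$ yields $h\bullet_{\delta}f=h\bullet_{\delta}g$ for every $h$; non-degeneracy of $(C(S,I),\bullet_{\delta})$ then forces $f=g$. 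This is the exact mirror of the proof of Lemma~\ref{lem:injectivity_of_can_map_for_left_and_right_non_degenerate_sem}.

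The only point requiring a little care---and the one I expect to be the sole potential obstacle---is checking that non-degeneracy is a self-dual notion, so that the condition of Definition~\ref{def:non-degeneracy_of_a_semigroup} imposed on $(C(S,I),\bullet_{\delta})$ really is what Lemma~\ref{lem:injectivity_of_can_map_for_left_and_right_non_degenerate_sem} consumes in $\mathbb{C}^{op}$. This is immediate once one notes that the non-degeneracy condition treats left and right multiplications symmetrically and is therefore invariant under replacing a semigroup by its opposite; all the remaining steps are a routine unwinding of the dualizing definitions, which is why this lemma can legitimately be recorded without a separate proof.
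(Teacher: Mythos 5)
Your proposal is correct and matches the paper's (implicit) argument: the paper records this lemma without a separate proof, deriving it from the displayed identities $g\circ L_f=f\bullet_{\delta}g$ and $g\circ R_f=g\bullet_{\delta}f$ together with non-degeneracy, exactly as in your direct second argument (and as the mirror of Lemma~\ref{lem:injectivity_of_can_map_for_left_and_right_non_degenerate_sem}). Your alternative route via $\mathbb{C}^{op}$ is also sound and consistent with how the paper frames the whole comultiplier section as dualization.
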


\section{Concretization by convolution}\label{sec:functoriality}

\subsection{The category of all internal semigroups}

Let $\mathbb{C}=(C,\otimes_C,I_C)$ and $\mathbb{D}=(D,\otimes_D,I_C)$ be monoidal categories, and $\mathbb{F}=(F,\Phi,\phi)\colon \mathbb{C}\to\mathbb{D}$ be a lax monoidal functor. 

\begin{proposition}\label{prop:mon_fun_lift_to_cats_of_actions}
For all semigroups $\mathsf{S}=(S,\mu),\mathsf{S}'=(S',\mu')$ in $\mathbb{C}$, $\mathbb{F}$ induces functors $\mathbf{Act}_{\mathsf{S}}(\mathbb{C})\to \mathbf{Act}_{\widetilde{\mathbb{F}}(\mathsf{S})}(\mathbb{D})$, ${}_{\mathsf{S}}\mathbf{Act}(\mathbb{C})\to {}_{\widetilde{\mathbb{F}}(\mathsf{S})}\mathbf{Act}(\mathbb{D})$ and ${}_{\mathsf{S}}\mathbf{Act}_{\mathsf{S}'}(\mathbb{C})\to {}_{\widetilde{\mathbb{F}}(\mathsf{S})}\mathbf{Act}_{\widetilde{\mathbb{F}}(\mathsf{S}')}(\mathbb{D})$, all denoted by $\tilde{F}$, and the following diagrams commute.
\begin{equation}
\xymatrix@R=1em{
\mathbf{Act}_{\mathsf{S}}(\mathbb{C})\ar[r]^{\tilde{F}} \ar[d]_{|-|_{\mathsf{S}}}& \mathbf{Act}_{\widetilde{\mathbb{F}}(\mathsf{S})}(\mathbb{D}) \ar[d]^{|-|_{\widetilde{\mathbb{F}}(\mathsf{S})}}&\ar[d]_{{}_{\mathsf{S}}|-|}{}_{\mathsf{S}}\mathbf{Act}(\mathbb{C})\ar[r]^{\tilde{F}} &\ar[d]^{{}_{\widetilde{\mathbb{F}}(\mathsf{S})}|-|} {}_{\widetilde{\mathbb{F}}(\mathsf{S})}\mathbf{Act}(\mathbb{D})\\
C\ar[r]_F&D&C\ar[r]_F&D\\
\ar[u]^{{}_{\mathsf{S}}|-|_{\mathsf{S}'}}{}_{\mathsf{S}}\mathbf{Act}_{\mathsf{S}'}(\mathbb{C}) \ar[r]_{\tilde{F}}& {}_{\widetilde{\mathbb{F}}(\mathsf{S})}\mathbf{Act}_{\widetilde{\mathbb{F}}(\mathsf{S}')}(\mathbb{D})\ar[u]_{{}_{\widetilde{\mathbb{F}}(\mathsf{S})}|-|_{\widetilde{\mathbb{F}}(\mathsf{S}')}}&&
}
\end{equation}
\end{proposition}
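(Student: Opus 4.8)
The plan is to build each $\tilde F$ by pushing the structure maps of an act through $F$ and gluing with the lax constraint $\Phi$. On objects I would send a left $\mathsf{S}$-act $(c,\gamma)$, with $\gamma\colon S\otimes_C c\to c$, to $\tilde F(c,\gamma):=(Fc,\bar\gamma)$ where $\bar\gamma:=F\gamma\circ\Phi_{S,c}\colon FS\otimes_D Fc\to Fc$; symmetrically a right act $(c,\rho)$ goes to $(Fc,F\rho\circ\Phi_{c,S})$ and a biact $(c,\gamma,\rho)$ to $(Fc,\bar\gamma,\bar\rho)$. On morphisms I simply set $\tilde F(f):=Ff$. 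With these definitions the three squares of the proposition commute on the nose, since $\tilde F$ visibly sends $(c,\gamma)$ to an act carried by $Fc$ and $f$ to $Ff$, and functoriality of each $\tilde F$ is inherited directly from that of $F$. Everything therefore reduces to two verifications: that $\bar\gamma$ is an action for the semigroup $\widetilde{\mathbb{F}}(\mathsf{S})=(FS,\bar\mu)$ with $\bar\mu=F\mu\circ\Phi_{S,S}$, and that $Ff$ is again an act morphism.

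The heart is the action axiom, which I would establish by a one-directional diagram chase starting from $\bar\gamma\circ(id_{FS}\otimes_D\bar\gamma)$. Expanding the definition and applying naturality of $\Phi$ in its second variable to $\gamma\colon S\otimes_C c\to c$ collects the two copies of $F$ into $F\big(\gamma\circ(id_S\otimes\gamma)\big)$ precomposed with $\Phi_{S,S\otimes c}\circ(id_{FS}\otimes\Phi_{S,c})$. The left-act axiom for $(c,\gamma)$ rewrites $\gamma\circ(id_S\otimes\gamma)$ as $\gamma\circ(\mu\otimes id_c)\circ\alpha^{-1}_{S,S,c}$, producing a factor $F(\alpha^{-1}_{S,S,c})$. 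At this precise point the associativity coherence of the lax monoidal functor $\mathbb{F}$ --- the hexagon relating $\Phi_{S\otimes S,c}\circ(\Phi_{S,S}\otimes id)$ to $\Phi_{S,S\otimes c}\circ(id\otimes\Phi_{S,c})$ through the associators of $\mathbb{C}$ and $\mathbb{D}$ --- converts the block $F(\alpha^{-1}_{S,S,c})\circ\Phi_{S,S\otimes c}\circ(id_{FS}\otimes\Phi_{S,c})$ into $\Phi_{S\otimes S,c}\circ(\Phi_{S,S}\otimes id_{Fc})\circ\alpha^{-1}_{FS,FS,Fc}$. A final application of naturality of $\Phi$ in its first variable to $\mu$ reassembles $F\mu\circ\Phi_{S,S}=\bar\mu$, leaving exactly $\bar\gamma\circ(\bar\mu\otimes_D id_{Fc})\circ\alpha^{-1}_{FS,FS,Fc}$, which is the required left-$\widetilde{\mathbb{F}}(\mathsf{S})$-act identity.

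For morphisms the computation is short: if $f\colon(c,\gamma)\to(c',\gamma')$ is a left $\mathsf{S}$-morphism, then $Ff\circ\bar\gamma=F(f\circ\gamma)\circ\Phi_{S,c}=F\big(\gamma'\circ(id_S\otimes f)\big)\circ\Phi_{S,c}$, and one pass of naturality of $\Phi$ rewrites the right-hand side as $\bar\gamma'\circ(id_{FS}\otimes Ff)$, so $Ff$ is a left $\widetilde{\mathbb{F}}(\mathsf{S})$-morphism. The right-act statement is entirely symmetric; indeed, rather than repeating the chase I would deduce it from the left-act case applied to the transposes $\mathbb{C}^t$ and $\mathbb{D}^t$ via Lemma~\ref{lem:left_act_is_right_act_for_transpose}. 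The biact statement then combines the two actions, the only additional point being the $\mathsf{S}\mhyphen\mathsf{S}'$-compatibility square, which is checked by the same mixture of naturality of $\Phi$ and the coherence of $\mathbb{F}$.

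The main obstacle I anticipate is bookkeeping rather than conceptual. Because $\mathbb{F}$ is merely lax, $\Phi$ need not be invertible, so the chase must be driven in the single direction in which the coherence hexagon is available; the temptation to invert $\Phi$ has to be resisted. Keeping the computation strictly one-directional, from $\bar\gamma\circ(id_{FS}\otimes\bar\gamma)$ toward $\bar\gamma\circ(\bar\mu\otimes id_{Fc})\circ\alpha^{-1}$, and making the coherence square bite at exactly the spot where the factor $F(\alpha^{-1}_{S,S,c})$ appears, is what lets the argument go through with no hypotheses on $\mathbb{F}$ beyond laxness.
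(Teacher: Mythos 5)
Your proposal is correct and follows essentially the same route as the paper: the same definitions $\tilde F(c,\gamma)=(Fc,F\gamma\circ\Phi_{S,c})$, $\tilde F(f)=Ff$, and the same verification pattern combining naturality of $\Phi$, the act axiom, and the lax associativity coherence of $\mathbb{F}$ (your equational chase is precisely the paper's commutative diagram read off cell by cell). The only cosmetic difference is that you propose to formalize the ``by symmetry'' step for right acts via the transpose construction and Lemma~\ref{lem:left_act_is_right_act_for_transpose}, which is a legitimate tidying-up but not a different argument.
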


\begin{proof}
 Let $(c,\gamma)$ be a left $\mathsf{S}$-act. One claims that $\tilde{F}(c,\gamma):=(F(c),F(\gamma)\circ \Phi_{S,c})$ is a left $\widetilde{\mathbb{F}}(\mathsf{S})$-act. This follows from the commutativity of the following diagram. (The top left cell commutes by coherence and the bottom right cell commutes since $\gamma$ is a left action.)
\begin{equation}
\xymatrix{
F(S)\otimes_D (F(S)\otimes_D F(c))\ar[r]^{id_{F(S)}\otimes\Phi_{S,c}}\ar[d]_{\alpha^{-1}_{F(S),F(S),F(c)}} & \ar[d]^{\Phi_{S,S\otimes_C c}}\ar[rd]^{id_{F(S)}\otimes_D F(\gamma)}F(S)\otimes_D F(S\otimes_C c)&\\
\ar[d]_{\Phi_{S,S}\otimes_D id_{F(c)}}(F(S)\otimes_D F(S))\otimes_D F(c) &F(S\otimes_C (S\otimes_C c))\ar[d]_{F(\alpha^{-1}_{S,S,c})}\ar[rd]^{F(id_S\otimes_C\gamma)}&F(S)\otimes_D F(c)\ar[d]^{\Phi_{S,c}}\\
\ar[d]_{F(\mu)\otimes id_{F(c)}}F(S\otimes_C S)\otimes_D F(c) \ar[r]^{\Phi_{S\otimes_C S,c}}&\ar[d]_{F(\mu\otimes_C id_c)}F((S\otimes_C S)\otimes_C c)&F(S\otimes_C c)\ar[d]^{F(\gamma)}\\
\ar[r]_{\Phi_{S,c}}F(S)\otimes_D F(c) &\ar[r]_{F(\gamma)} F(S\otimes_C c) &F(c)
}
\end{equation}
Now let $f\in {}_{\mathsf{S}}\mathbf{Act}(\mathbb{C})((c,\gamma),(c',\gamma'))$. Then, $\tilde{F}(f):=F(f)$ belongs to ${}_{\widetilde{\mathbb{F}}(\mathsf{S})}\mathbf{Act}(\mathbb{D})((F(c),F(\gamma)\circ \Phi_{S,c}),(F(c'),F(\gamma')\circ\Phi_{S,c'}))$ as it is shown by the commutativity of the following diagram.
\begin{equation}
\xymatrix@R=1em{
F(c)\ar[r]^{F(f)} & F(c')\\
F(S\otimes_C c) \ar[u]^{F(\gamma)}\ar[r]^{F(id_S\otimes_c f)}& F(S\otimes_C c')\ar[u]_{F(\gamma')}\\
\ar[u]^{\Phi_{S,c}}F(S)\otimes_D F(c) \ar[r]_{id_{F(S)}\otimes_D F(f)}& F(S)\otimes_D F(c')\ar[u]_{\Phi_{S,c'}}
}
\end{equation}
One thus obtains the desired functor $\tilde{F}\colon {}_{\mathsf{S}}\mathbf{Act}(\mathbb{C})\to {}_{\widetilde{\mathbb{F}}(\mathsf{S})}\mathbf{Act}(\mathbb{D})$.

Let $(c,\rho)$ be a right $\mathsf{S}$-act. By symmetry, $\tilde{F}(c,\rho):=(F(c),F(\rho)\circ \Phi_{c,S})$ is a right $\widetilde{\mathbb{F}}(\mathsf{S})$-act and given $f\in \mathbf{Act}_{\mathsf{S}}(\mathbb{C})((c,\rho),(c',\rho'))$, $\tilde{F}(f):=F(f)\in \mathbf{Act}_{\widetilde{\mathbb{F}}(\mathsf{S})}(\mathbb{D})((F(c),F(\rho)\circ \Phi_{c,S}),(F(c'),F(\rho')\circ\Phi_{c',S}))$. One thus obtains the desired functor $\tilde{F}\colon \mathbf{Act}_{\mathsf{S}}(\mathbb{C})\to\mathbf{ Act}_{\widetilde{\mathbb{F}}(\mathsf{S})}(\mathbb{D})$.

Let $(c,\gamma,\rho)$ be a $\mathsf{S}\mhyphen\mathsf{S}'$-biact. That $(F(c),F(\gamma)\circ\Phi_{S,c},F(\rho)\circ \Phi_{S',c})$ is a $\widetilde{\mathbb{F}}(\mathsf{S})\mhyphen\widetilde{\mathbb{F}}(\mathsf{S}')$-biact follows from the commutativity of the following diagram. (The top right cell commutes by coherence.  The bottom left diagram commutes because $(c,\gamma,\rho)$ is a $\mathsf{S}\mhyphen\mathsf{S}'$-biact.)
\begin{equation}
\xymatrix@R=1em{
\ar[d]_{\alpha_{F(S),F(c),F(S')}}(F(S)\otimes_D F(c))\otimes_D F(S')\ar[r]^{\Phi_{S,c}\otimes_D id_{F(S')}} & \ar[d]_{\Phi_{S\otimes_C c, S'}}F(S\otimes_C c)\otimes_D F(S')\ar[rd]^{F(\gamma)\otimes_D id_{F(S')}}&\\
F(S)\otimes_D (F(c)\otimes_D F(S'))\ar[d]_{id_{F(S)}\otimes_D \Phi_{c,S'}} &\ar[d]_{F(\alpha_{S,c,S'})}F((S\otimes_C c)\otimes_C S')\ar[rd]_{F(\gamma\otimes_C id_{S'})}&\ar[d]^{\Phi_{c,S'}}F(c)\otimes_D F(S')\\
\ar[r]_{\Phi_{S,c\otimes_C S'}}\ar[d]_{id_{F(S)}\otimes_D F(\rho)}F(S)\otimes_D F(c\otimes_C S') &\ar[d]_{F(id_S\otimes\rho)}F(S\otimes_C(c\otimes_C S')) & F(c\otimes_C S')\ar[d]^{F(\rho)}\\
F(S)\otimes_D F(c) \ar[r]_{\Phi_{S,c}}& F(S\otimes_C c) \ar[r]_{F(\gamma)}& F(c)
}
\end{equation}
One  may thus define a functor $\tilde{F}\colon {}_{\mathsf{S}}Act_{\mathsf{S}'}(\mathbb{C})\to {}_{\widetilde{\mathbb{F}}(\mathsf{S})}Act_{\widetilde{\mathbb{F}}(\mathsf{S}')}(\mathbb{D})$ acting as $\tilde{F}((c,\gamma,\rho)\xrightarrow{f}(c'\gamma',\rho'))=(F(c),F(\gamma)\circ \Phi_{S,c},F(\rho)\circ \Phi_{c,S'})\xrightarrow{F(f)}(F(c'),F(\gamma')\circ \Phi_{S,c'},F(\rho')\circ \Phi_{c',S'})$. 
\end{proof}

\begin{lemma}\label{lem:un_petit_lem_teknik}
Let  $\mathsf{S}=(S,\mu),\mathsf{S}'=(S',\mu')$ be semigroups in $\mathbb{C}$. Let $(c,\gamma)$ be a left $\mathsf{S}$-act and let $(c',\rho')$ be a right $\mathsf{S}'$-act. Then, $\Phi_{c,c'}\in {}_{\widetilde{\mathbb{F}}(\mathsf{S})}Act_{\widetilde{\mathbb{F}}(\mathsf{S}')}(\mathbb{D})(\tilde{F}(c,\gamma)\otimes_D \tilde{F}(c',\rho'),\tilde{F}((c,\gamma)\otimes_C (c',\rho')))$. 
\end{lemma}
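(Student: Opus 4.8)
The plan is to verify directly that $\Phi_{c,c'}$ satisfies the two conditions making it a $\widetilde{\mathbb{F}}(\mathsf{S})\mhyphen\widetilde{\mathbb{F}}(\mathsf{S}')$-morphism, that is, that it is simultaneously a left $\widetilde{\mathbb{F}}(\mathsf{S})$-morphism and a right $\widetilde{\mathbb{F}}(\mathsf{S}')$-morphism. First I would spell out the two bi-acts involved. By Proposition~\ref{prop:mon_fun_lift_to_cats_of_actions}, $\tilde{F}(c,\gamma)=(F(c),F(\gamma)\circ\Phi_{S,c})$ is a left $\widetilde{\mathbb{F}}(\mathsf{S})$-act and $\tilde{F}(c',\rho')=(F(c'),F(\rho')\circ\Phi_{c',S'})$ is a right $\widetilde{\mathbb{F}}(\mathsf{S}')$-act, so by Proposition~\ref{prop:tens_prod_of_right_and_left_act} applied in $\mathbb{D}$ the source $\tilde{F}(c,\gamma)\otimes_D\tilde{F}(c',\rho')$ carries the left action $((F(\gamma)\circ\Phi_{S,c})\otimes_D id_{F(c')})\circ\alpha^{-1}_{F(S),F(c),F(c')}$ together with the symmetric right action. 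On the other side, $(c,\gamma)\otimes_C(c',\rho')=(c\otimes_C c',\gamma_{\rho'},{}_{\gamma}\rho')$ with $\gamma_{\rho'}=(\gamma\otimes_C id_{c'})\circ\alpha^{-1}_{S,c,c'}$, and applying $\tilde{F}$ to this bi-act yields the target, whose left action is $F(\gamma_{\rho'})\circ\Phi_{S,c\otimes_C c'}$.

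For the left-morphism condition I would chase the square whose commutativity amounts to the identity $\Phi_{c,c'}\circ((F(\gamma)\circ\Phi_{S,c})\otimes_D id_{F(c')})\circ\alpha^{-1}_{F(S),F(c),F(c')}=F(\gamma_{\rho'})\circ\Phi_{S,c\otimes_C c'}\circ(id_{F(S)}\otimes_D\Phi_{c,c'})$. The chase proceeds in three moves: (i) functoriality of $\otimes_D$, splitting $(F(\gamma)\circ\Phi_{S,c})\otimes_D id_{F(c')}$ into $(F(\gamma)\otimes_D id_{F(c')})\circ(\Phi_{S,c}\otimes_D id_{F(c')})$; (ii) naturality of $\Phi$ at the pair $(\gamma,id_{c'})$, which replaces $\Phi_{c,c'}\circ(F(\gamma)\otimes_D id_{F(c')})$ by $F(\gamma\otimes_C id_{c'})\circ\Phi_{S\otimes_C c,c'}$; and (iii) the associativity coherence axiom of the lax monoidal functor $\mathbb{F}$, which rewrites $\Phi_{S\otimes_C c,c'}\circ(\Phi_{S,c}\otimes_D id_{F(c')})$ as $F(\alpha^{-1}_{S,c,c'})\circ\Phi_{S,c\otimes_C c'}\circ(id_{F(S)}\otimes_D\Phi_{c,c'})\circ\alpha_{F(S),F(c),F(c')}$. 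To close the computation one observes that $\alpha_{F(S),F(c),F(c')}$ cancels the $\alpha^{-1}_{F(S),F(c),F(c')}$ already present, while $F(\gamma\otimes_C id_{c'})\circ F(\alpha^{-1}_{S,c,c'})=F(\gamma_{\rho'})$ by functoriality of $F$; this is exactly the right-hand side.

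The right $\widetilde{\mathbb{F}}(\mathsf{S}')$-morphism condition follows from the mirror-image chase, using naturality of $\Phi$ at $(id_c,\rho')$ together with the same coherence axiom; alternatively, one may invoke the transpose, since passing from $\mathbb{C},\mathbb{D}$ to $\mathbb{C}^t,\mathbb{D}^t$ interchanges left and right acts (Lemma~\ref{lem:left_act_is_right_act_for_transpose}) and converts the right condition into a left one. I expect the only genuine subtlety to be step (iii): one must deploy the lax-monoidal associativity square in precisely the right orientation and keep careful track of which associators live in $\mathbb{C}$ (and are transported by $F$) versus in $\mathbb{D}$, since it is exactly the matching of these two families of constraints that forces the two actions to agree under $\Phi_{c,c'}$.
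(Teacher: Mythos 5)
Your proof is correct and follows essentially the same route as the paper's: the left-morphism square is decomposed into a naturality square for $\Phi$ (at $(\gamma,id_{c'})$) and the lax-monoidal associativity coherence cell, after which the $\mathbb{D}$-associators cancel and functoriality of $F$ assembles $F(\gamma_{\rho'})$; the right condition is handled by symmetry. The only cosmetic difference is that you run the argument as an equational chase while the paper draws a single commutative diagram with the same two cells.
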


\begin{proof}
According to Proposition~\ref{prop:tens_prod_of_right_and_left_act}, $(c,\gamma)\otimes_C(c',\rho')=(c\otimes_C c',(\gamma\otimes id_{c'})\circ \alpha^{-1}_{S,c,c'},(id_c\otimes\rho')\circ\alpha_{c,c',S'})$ whence $\tilde{F}((c,\gamma)\otimes_C(c',\rho'))=(F(c\otimes_C c'), F(\gamma\otimes id_{c'})\circ F(\alpha^{-1}_{S,c,c'})\circ \Phi_{S,c\otimes_C c'},F(id_{c}\otimes\rho')\circ F(\alpha_{c,c',S'})\circ\Phi_{c\otimes_C c',S'})$. 

Now $\tilde{F}(c,\gamma)\otimes_D \tilde{F}(c',\rho')=(F(c),F(\gamma)\circ \Phi_{S,c})\otimes_D (F(c'),F(\rho')\circ \Phi_{c',S'})=
(F(c)\otimes_D F(c'),(F(\gamma)\otimes_D id_{F(c')})\circ (\Phi_{S,c}\otimes id_{F(c')})\circ \alpha^{-1}_{F(S),F(c),F(c')},(id_{F(c)}\otimes_D F(\rho'))\circ (id_{F(c)}\otimes_D \Phi_{c',S'})\circ \alpha_{F(c),F(c'),F(S')})$. 

The following diagram commutes. (The top cell commutes by naturality while the bottom cell commutes by coherence.)
\begin{equation}
\xymatrix@R=1em{
F(c) \otimes_D F(c')\ar[r]^{\Phi_{c,c'}} & F(c\otimes_C c')\\
\ar[u]^{F(\gamma)\otimes_D id_{F(c')}}F(S\otimes_C c)\otimes_D F(c') \ar[r]_{\Phi_{S\otimes_C c,c'}}& F((S\otimes_C c)\otimes_C c')\ar[u]_{F(\gamma\otimes_c id_{c'})}\\
\ar[u]^{\Phi_{S,c}\otimes_D id_{F(c')}}(F(S)\otimes_D F(c))\otimes_D F(c') & F(S\otimes_C (c\otimes_C c'))\ar[u]_{F(\alpha^{-1}_{S,c,c'})}\\
\ar[u]^{\alpha^{-1}_{F(S),F(c),F(c')}}F(S)\otimes_D (F(c)\otimes_D F(c')) \ar[r]_{id_{F(S)}\otimes_D \Phi_{c,c'}}&F(S)\otimes_D F(c\otimes_C c')\ar[u]_{\Phi_{S,c\otimes_C c'}}
}
\end{equation}
Whence $\Phi_{c,c'}$ is a left $\widetilde{\mathbb{F}}(\mathsf{S})$-morphism. Symmetrically, $\Phi_{c,c'}$ is a right $\widetilde{\mathbb{F}}(\mathsf{S}')$-morphism.  Consequently, $\Phi_{c,c'}\in {}_{\widetilde{\mathbb{F}}(\mathsf{S})}\mathbf{Act}_{\widetilde{\mathbb{F}}(\mathsf{S}')}(\mathbb{D})(\tilde{F}(c,\gamma)\otimes_D \tilde{F}(c',\rho'),\tilde{F}((c,\gamma)\otimes_C (c',\rho')))$.
\end{proof}

Let $\mathsf{S}=(S,\mu)$ be a semigroup in $\mathbb{C}$. Let $L\in \mathbf{Act}_{\mathsf{S}}(\mathbb{C})((S,\mu),(S,\mu))$. By Prop.~\ref{prop:mon_fun_lift_to_cats_of_actions}, $\tilde{F}(L)=F(L)\in \mathbf{Act}_{\widetilde{\mathbb{F}}(\mathsf{S})}(\mathbb{D})(\tilde{F}(S,\mu),\tilde{F}(S,\mu)))$. Now 
\begin{equation}
\begin{array}{lll}
\tilde{F}(\mathsf{g}_{\mathbb{C},\mathsf{S}}(L))\circ\Phi_{S,S}&=&F(\mu)\circ F(id_S\otimes_C L)\circ \Phi_{S,S}\\
&=&F(\mu)\circ\Phi_{S,S}\circ (id_{F(S)}\otimes_D F(L))\\
&=&\mathsf{g}_{\mathbb{D},\widetilde{\mathbb{F}}(\mathsf{S})}(\tilde{F}(L)).
\end{array}
\end{equation}
In a similar way one sees that for each $R\in {}_{\mathsf{S}}\mathbf{Act}(\mathbb{C})((S,\mu),(S,\mu))$, $\tilde{F}(\mathsf{d}_{\mathbb{C},\mathsf{S}}(R))=\mathsf{d}_{\mathbb{D},\tilde{\mathbb{F}}(\mathsf{S})}(\tilde{F}(R))$ as members of ${}_{\hat{\mathbb{F}}(\mathsf{S})}\mathbf{Act}_{\hat{\mathbb{F}}(\mathsf{S})}(\tilde{F}(S,\mu)\otimes_D \tilde{F}(S,\mu),\tilde{F}(S,\mu))$.

Now let $(L,R)\in \mathit{Mult}_{\mathbb{C}}(\mathsf{S})$. Then $(\tilde{F}(L),\tilde{F}(R))=(F(L),F(R))\in \mathit{Mult}_{\mathbb{D}}(\tilde{\mathbb{F}}(\mathsf{S}))$ since \begin{equation}
\begin{array}{lll}
\mathsf{g}_{\mathbb{D},\tilde{\mathbb{F}}(\mathsf{S})}(\tilde{F}(L))&=&\tilde{F}(\mathsf{g}_{\mathbb{C},\mathsf{S}}(L))\circ\Phi_{S,S}\\
&=&\tilde{F}(\mathsf{d}_{\mathbb{C},\mathsf{S}}(R))\circ\Phi_{S,S}\\
&=&\mathsf{d}_{\mathbb{D},\tilde{\mathbb{F}}(\mathsf{S})}(\tilde{F}(R)).
\end{array}
\end{equation}

Now one thus easily obtains the following
\begin{proposition}\label{prop:first_func_of_mult}
$(L,R)\mapsto (F(L),F(R))$ is a homomorphism $\mathit{Mult}_{disc}(\mathsf{S};\mathbb{F})$  of monoids from $\mathsf{Mult}_{\mathbb{C}}(\mathsf{S})$ to $\mathsf{Mult}_{\mathbb{D}}(\tilde{\mathbb{F}}(\mathsf{S}))$. If $F$ is faithful, then this homomorphism is one-to-one.
\end{proposition}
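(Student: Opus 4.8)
The plan is to observe that essentially all the substantive content has already been carried out in the displayed computations immediately preceding the statement, which establish that the assignment $(L,R)\mapsto(F(L),F(R))$ is well defined; that is, that $(F(L),F(R))$ indeed lies in $\mathit{Mult}_{\mathbb{D}}(\widetilde{\mathbb{F}}(\mathsf{S}))$ whenever $(L,R)\in\mathit{Mult}_{\mathbb{C}}(\mathsf{S})$. What remains is purely formal and reduces entirely to the functoriality of $F$ and, for the second assertion, its faithfulness. So I would begin by simply recording that well-definedness is already in hand, and then verify the monoid-homomorphism axioms.

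First I would recall from Proposition~\ref{prop:monoid_of_multipliers_in_mon_cat} that the operation is $(L',R')\star(L,R)=(L'\circ L,\,R\circ R')$ with unit $(id_S,id_S)$. To see that the map preserves the unit, note $F(id_S)=id_{F(S)}$, so the image of $(id_S,id_S)$ is $(id_{F(S)},id_{F(S)})$, which is the unit of $\mathsf{Mult}_{\mathbb{D}}(\widetilde{\mathbb{F}}(\mathsf{S}))$. For multiplicativity, I would apply $F$ componentwise to $(L'\circ L,\,R\circ R')$ and use functoriality to get
\[
(F(L'\circ L),\,F(R\circ R'))=(F(L')\circ F(L),\,F(R)\circ F(R')),
\]
which is exactly $(F(L'),F(R'))\star(F(L),F(R))$. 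Hence the map is a homomorphism of monoids, and I would name it $\mathit{Mult}_{disc}(\mathsf{S};\mathbb{F})$ as in the statement.

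For the injectivity claim I would assume $F$ faithful and suppose $(F(L),F(R))=(F(L'),F(R'))$, i.e.\ $F(L)=F(L')$ and $F(R)=F(R')$ as $D$-morphisms. Since the forgetful functors $|-|_{\mathsf{S}}$ and ${}_{\mathsf{S}}|-|$ are faithful, each of $L,L',R,R'$ is determined by its underlying $C$-morphism, and by Proposition~\ref{prop:mon_fun_lift_to_cats_of_actions} the underlying $C$-morphism of $\tilde{F}(L)$ is exactly $F$ applied to that of $L$. Faithfulness of $F$ therefore yields $L=L'$ and $R=R'$, so $(L,R)=(L',R')$.

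I do not expect any genuine obstacle here: the only nontrivial step — that multipliers are sent to multipliers, which rests on the compatibility $\tilde{F}(\mathsf{g}_{\mathbb{C},\mathsf{S}}(L))\circ\Phi_{S,S}=\mathsf{g}_{\mathbb{D},\widetilde{\mathbb{F}}(\mathsf{S})}(\tilde{F}(L))$ and its dual for $\mathsf{d}_{\mathbb{C},\mathsf{S}}$ — has already been verified in the discussion preceding the proposition. The remaining verifications are direct consequences of $F$ being a functor (for the homomorphism property) and of its faithfulness together with the faithfulness of the forgetful functors (for the injectivity), so the proof is essentially a matter of assembling these observations.
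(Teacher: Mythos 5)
Your proposal is correct and matches the paper's treatment: the paper gives no separate proof, stating that the proposition "easily" follows from the displayed computations immediately preceding it (which establish that $(F(L),F(R))$ is again a multiplier), exactly as you observe. Your explicit verification of unit-preservation, multiplicativity via functoriality of $F$, and injectivity via faithfulness fills in precisely the routine steps the paper leaves to the reader.
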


\begin{corollary}
Let $(\mathbb{C},\sigma)$ be a symmetric monoidal category. Then for each semigroup $\mathsf{S}$ in $\mathbb{C}$, $\mathsf{Mult}_{\mathbb{C}}(\mathsf{S})^{op}\simeq \mathsf{Mult}_{\mathbb{C}}(\mathsf{S}^{op})$, $(L,R)\mapsto (R,L)$. In particular if $\mathsf{S}$ is commutative, then $(L,R)\mapsto (R,L)$ provides an involutive anti-automorphism of $\mathsf{Mult}_{\mathbb{C}}(\mathsf{S})$. 
\end{corollary}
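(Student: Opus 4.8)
The plan is to route the claimed isomorphism through the transpose monoidal category $\mathbb{C}^t$, thereby reusing the ``formal symmetry'' already recorded in Lemma~\ref{lem:formal_symmetry} and reducing all genuinely new work to a single identification of monoids.

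First I would dispose of the purely formal bookkeeping. Lemma~\ref{lem:formal_symmetry} asserts that $(L,R)\mapsto(R,L)$ is an isomorphism of monoids $\mathsf{Mult}_{\mathbb{C}}(\mathsf{S})\to\mathsf{Mult}_{\mathbb{C}^t}(\mathsf{S})^{op}$. Since any monoid isomorphism $\phi\colon M\to N$ is, with the same underlying map, also an isomorphism $M^{op}\to N^{op}$, and since $(N^{op})^{op}=N$, the very same assignment $(L,R)\mapsto(R,L)$ is an isomorphism $\mathsf{Mult}_{\mathbb{C}}(\mathsf{S})^{op}\to\mathsf{Mult}_{\mathbb{C}^t}(\mathsf{S})$. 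Hence the corollary follows as soon as I exhibit an equality of monoids
\begin{equation}
\mathsf{Mult}_{\mathbb{C}^t}(\mathsf{S})=\mathsf{Mult}_{\mathbb{C}}(\mathsf{S}^{op}),
\end{equation}
for then the composite $\mathsf{Mult}_{\mathbb{C}}(\mathsf{S})^{op}\to\mathsf{Mult}_{\mathbb{C}^t}(\mathsf{S})=\mathsf{Mult}_{\mathbb{C}}(\mathsf{S}^{op})$ is exactly $(L,R)\mapsto(R,L)$.

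The core of the proof is this monoid equality, and it is here that the symmetry $\sigma$ is used. Writing $\nu:=\mu\circ\sigma_{S,S}$ for the multiplication of $\mathsf{S}^{op}$, I would compare the two defining fibre products factorwise, first using Lemma~\ref{lem:left_act_is_right_act_for_transpose} to rewrite $\mathbf{Act}_{\mathsf{S}}(\mathbb{C}^t)={}_{\mathsf{S}}\mathbf{Act}(\mathbb{C})$ and ${}_{\mathsf{S}}\mathbf{Act}(\mathbb{C}^t)=\mathbf{Act}_{\mathsf{S}}(\mathbb{C})$. The claim then splits into three checks. In (i) one shows that a morphism $R\colon S\to S$ is a right $\mathsf{S}^{op}$-endomorphism of $(S,\nu)$, i.e. $R\circ\nu=\nu\circ(R\otimes id_S)$, if and only if it is a left $\mathsf{S}$-endomorphism of $(S,\mu)$, i.e. $R\circ\mu=\mu\circ(id_S\otimes R)$: one implication follows by composing the latter identity on the right with $\sigma_{S,S}$ and invoking naturality of $\sigma$, and the converse uses in addition the symmetric axiom $\sigma_{S,S}\circ\sigma_{S,S}=id_{S\otimes S}$. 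Check (ii) is the mirror statement for left $\mathsf{S}^{op}$-endomorphisms versus right $\mathsf{S}$-endomorphisms. For (iii), the gluing condition in $\mathsf{Mult}_{\mathbb{C}^t}(\mathsf{S})$ reads $\mu\circ(A\otimes id_S)=\mu\circ(id_S\otimes B)$, while that in $\mathsf{Mult}_{\mathbb{C}}(\mathsf{S}^{op})$ reads $\nu\circ(id_S\otimes A)=\nu\circ(B\otimes id_S)$, which by naturality of $\sigma$ becomes $\mu\circ(A\otimes id_S)\circ\sigma_{S,S}=\mu\circ(id_S\otimes B)\circ\sigma_{S,S}$; since $\sigma_{S,S}$ is an isomorphism these conditions are equivalent. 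Thus the two fibre products have the same underlying set of pairs $(A,B)$, and as the law $\star$ and the unit $(id_S,id_S)$ are given by identical formulas, the two monoids coincide.

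The hard part will be nothing deep but precisely this variance bookkeeping in (i)--(iii): one must insert the naturality squares of $\sigma$ on the correct tensor factor and in the correct order, and it is essential that $\mathbb{C}$ be symmetric rather than merely braided, since the converse inclusions in (i) and (ii) rest on $\sigma_{S,S}\circ\sigma_{S,S}=id_{S\otimes S}$. Once the general statement is in hand, the commutative case is immediate: there $\mathsf{S}=\mathsf{S}^{op}$, so the isomorphism becomes one from $\mathsf{Mult}_{\mathbb{C}}(\mathsf{S})^{op}$ to $\mathsf{Mult}_{\mathbb{C}}(\mathsf{S})$, i.e. an anti-automorphism of $\mathsf{Mult}_{\mathbb{C}}(\mathsf{S})$, and since applying $(L,R)\mapsto(R,L)$ twice returns $(L,R)$ it is an involution.
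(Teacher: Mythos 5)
Your proof is correct, and its skeleton coincides with the paper's: both routes pass through the transpose category $\mathbb{C}^t$ and finish with Lemma~\ref{lem:formal_symmetry}. The only genuine difference is how the middle identification is obtained. The paper applies Proposition~\ref{prop:first_func_of_mult} to the strong monoidal functor $\mathbb{\Sigma}=(id_C,\sigma,id_I)\colon\mathbb{C}\to\mathbb{C}^t$ (for which $\widetilde{\mathbb{\Sigma}}\mathsf{S}=\mathsf{S}^{op}$ and whose underlying functor is $id_C$), obtaining $\mathsf{Mult}_{\mathbb{C}}(\mathsf{S})\simeq\mathsf{Mult}_{\mathbb{C}^t}(\mathsf{S}^{op})$ with inverse supplied by $\mathbb{\Sigma}\colon\mathbb{C}^t\to\mathbb{C}$; replacing $\mathsf{S}$ by $\mathsf{S}^{op}$, this is exactly your equality $\mathsf{Mult}_{\mathbb{C}^t}(\mathsf{S})=\mathsf{Mult}_{\mathbb{C}}(\mathsf{S}^{op})$, which you instead verify by hand through the naturality squares of $\sigma$ and Lemma~\ref{lem:left_act_is_right_act_for_transpose}. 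Your route is more self-contained and makes the use of $\sigma$ explicit, at the cost of redoing computations that the general functoriality statement already encapsulates; the paper's version is shorter but leans on the machinery of Section~\ref{sec:functoriality}. One small correction: in your checks (i)--(iii) both implications already follow from naturality together with invertibility of $\sigma_{S,S}$ (so they would go through for a mere braiding); the symmetry axiom $\sigma_{S,S}\circ\sigma_{S,S}=id_{S\otimes S}$ is what you actually need for $(\mathsf{S}^{op})^{op}=\mathsf{S}$, hence for the involutivity claim in the commutative case.
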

\begin{proof}
It suffices to apply Prop.~\ref{prop:first_func_of_mult} with the monoidal functor $\mathbb{\Sigma}:=(id_C,\sigma,id_I)\colon \mathbb{C}\to\mathbb{C}^t$ to obtain the homomorphism $\mathit{Mult}_{disc}(\mathsf{S};\mathbb{\Sigma})\colon \mathsf{Mult}_{\mathbb{C}}(\mathsf{S})\to \mathsf{Mult}_{\mathbb{C}^t}(\mathsf{S}^{op})$, and then to note that $(\mathsf{S}^{op})^{op}=\mathsf{S}$ and $(\mathbb{C}^{t})^t=\mathbb{C}$ so that $\mathbb{\Sigma}\colon \mathbb{C}^t\to\mathbb{C}$, which then provides a homomorphism of monoids from $\mathsf{Mult}_{\mathbb{C}^t}(\mathsf{S}^{op})$ to $\mathsf{Mult}_{\mathbb{C}}(\mathsf{S})$, inverse from the first one. One concludes using Lemma~\ref{lem:formal_symmetry}. 
\end{proof}

\begin{example}
Assume that $\mathbb{C}'$ is a monoidal subcategory of $\mathbb{C}$, that is, the canonical inclusion functor is strictly monoidal. Then for any semigroup $\mathsf{S}$ in $\mathbb{C}'$, $\mathsf{Mult}_{\mathbb{C}'}(\mathsf{S})\subseteq \mathsf{Mult}_{\mathbb{C}}(\mathsf{S})$. If the inclusion is full, then one has even $\mathsf{Mult}_{\mathbb{C}'}(\mathsf{S})= \mathsf{Mult}_{\mathbb{C}}(\mathsf{S})$.
\end{example}

\begin{lemma}\label{lem:functoriality_mult_with_respect_to_monoidal_functors}
Let $\mathsf{S}$ be a semigroup in $\mathbb{C}$. 
\begin{enumerate}
\item $\mathit{Mult}_{disc}(\mathsf{S};{\mathbb{Id}_{\mathbb{C}}})=id_{\mathsf{Mult}_{\mathbb{C}}(\mathsf{S})}$. 
\item Let $\mathbb{F}\colon \mathbb{A}\to\mathbb{B}$ and $\mathbb{G}\colon \mathbb{B}\to\mathbb{C}$ be lax monoidal functors. Then one has $\mathit{Mult}_{disc}(\mathsf{S};\mathbb{G}\circ \mathbb{F})=\mathit{Mult}_{disc}(\widetilde{\mathbb{F}}(\mathsf{S});{\mathbb{G}})\circ \mathit{Mult}_{disc}(\mathsf{S};{\mathbb{F}})\colon \mathsf{Mult}_{\mathbb{A}}(\mathsf{S})\to \mathsf{Mult}_{\mathbb{C}}(\widetilde{\mathbb{G}\circ\mathbb{F}}(\mathsf{S}))$.
\end{enumerate}
\end{lemma}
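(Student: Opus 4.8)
The plan is to reduce both statements to pure bookkeeping, using the explicit description of $\mathit{Mult}_{disc}$ furnished by Proposition~\ref{prop:first_func_of_mult}: for a lax monoidal functor $\mathbb{F}$ with underlying functor $F$, the homomorphism $\mathit{Mult}_{disc}(\mathsf{S};\mathbb{F})$ sends a multiplier $(L,R)$ to $(F(L),F(R))$. Since the construction is purely componentwise, everything will follow from the functoriality of the assignment $\mathbb{F}\mapsto F$ of underlying functors together with the functoriality of $\mathbb{F}\mapsto\widetilde{\mathbb{F}}$ recorded in Section~\ref{sec:prerequisites} (the functor $\mathit{Sem}\colon\mathbf{MonCat}\to\mathbf{Cat}$).

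For the first assertion I would note that the underlying functor of $\mathbb{Id}_{\mathbb{C}}$ is $id_C$ and that $\widetilde{\mathbb{Id}_{\mathbb{C}}}(\mathsf{S})=\mathsf{S}$, since $\mathbb{Id}_{\mathbb{C}}$ is strict monoidal with identity coherence morphisms. Hence $\mathit{Mult}_{disc}(\mathsf{S};\mathbb{Id}_{\mathbb{C}})$ carries $(L,R)$ to $(id_C(L),id_C(R))=(L,R)$, which is exactly $id_{\mathsf{Mult}_{\mathbb{C}}(\mathsf{S})}$.

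For the second assertion, the underlying functor of $\mathbb{G}\circ\mathbb{F}$ is $G\circ F$ by definition. I would then trace a multiplier $(L,R)\in\mathit{Mult}_{\mathbb{A}}(\mathsf{S})$ through the composite $\mathit{Mult}_{disc}(\widetilde{\mathbb{F}}(\mathsf{S});\mathbb{G})\circ\mathit{Mult}_{disc}(\mathsf{S};\mathbb{F})$: it is sent first to $(F(L),F(R))$ and then to $(G(F(L)),G(F(R)))=((G\circ F)(L),(G\circ F)(R))$, which is precisely its image under $\mathit{Mult}_{disc}(\mathsf{S};\mathbb{G}\circ\mathbb{F})$. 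The two homomorphisms therefore agree on underlying maps.

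The only point requiring care --- and the closest thing to an obstacle --- is checking that this equality of homomorphisms is well-posed, i.e.\ that the two sides share a common codomain. This amounts to $\widetilde{\mathbb{G}\circ\mathbb{F}}(\mathsf{S})=\widetilde{\mathbb{G}}(\widetilde{\mathbb{F}}(\mathsf{S}))$, which is the functoriality of $\mathit{Sem}$ and which, unwound, is the identity $(G\circ F)^{(2)}_{S,S}=G(F^{(2)}_{S,S})\circ G^{(2)}_{FS,FS}$ used to rewrite the transported multiplication on the common object $(G\circ F)(S)$. With this matching in hand, the equality of the two monoid homomorphisms is immediate.
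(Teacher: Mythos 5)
Your proof is correct and follows exactly the route the paper intends: the paper states this lemma without proof, taking it as immediate from the componentwise description $(L,R)\mapsto(F(L),F(R))$ of Proposition~\ref{prop:first_func_of_mult} together with the functoriality of $\mathit{Sem}$, which is precisely what you spell out (including the well-posedness of the codomain via $\widetilde{\mathbb{G}\circ\mathbb{F}}(\mathsf{S})=\widetilde{\mathbb{G}}(\widetilde{\mathbb{F}}(\mathsf{S}))$).
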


What kind of functoriality is stated above?  By Grothendieck's construction (see e.g.~\cite{Borceux2}) the functor $\mathit{Sem}\colon \mathbf{MonCat}\to \mathbf{Cat}$ from Section~\ref{sec:prerequisites}  provides a split opfibration over $\mathbf{MonCat}$, that is, an opfibred category $\mathbf{IntSem}\xrightarrow{\Pi}\mathbf{MonCat}$ with a choice of cartesian morphisms, where $\mathbf{IntSem}$ is the category of {\em all} internal semigroups. In details, $\mathbf{IntSem}$ has objects the pairs $(\mathbb{C},\mathsf{S})$ consisting of a monoidal category $\mathbb{C}$ together with a semigroup $\mathsf{S}$ in $\mathbb{C}$, morphisms $(\mathbb{C},\mathsf{S})\xrightarrow{(\mathbb{F},f)}(\mathbb{D},\mathsf{T})$ the pairs $(\mathbb{F},f)$ consisting of a monoidal functor $\mathbb{F}\colon\mathbb{C}\to\mathbb{D}$ and a morphism $f\in \mathbf{Sem}(\mathbb{D})(\widetilde{\mathbb{F}}(\mathsf{S}),\mathsf{T})$, composition: $(\mathbb{C},\mathsf{S})\xrightarrow{(\mathbb{F},f)}(\mathbb{D},\mathsf{T})\xrightarrow{(\mathbb{G},g)}(\mathbb{E},\mathsf{U})=(\mathbb{C},\mathsf{S})\xrightarrow{(\mathbb{G}\circ\mathbb{F},g\circ \widetilde{\mathbb{G}}(f))}(\mathbb{E},\mathsf{U})$, and identities $id_{(\mathbb{C},\mathsf{S})}:=(\mathbb{C},\mathsf{S})\xrightarrow{(\mathbb{id}_{\mathbb{C}},id_{\mathsf{S}})}(\mathbb{C},\mathsf{S})$. Moreover the projection functor $\Pi\colon \mathbf{IntSem}\to \mathbf{MonCat}$ is a split opfibration given by $\Pi((\mathbb{C},\mathsf{S})\xrightarrow{(\mathbb{F},f)}(\mathbb{D},\mathsf{T})):=\mathbb{C}\xrightarrow{\mathbb{F}}\mathbb{D}$ and with cleavage described as follows: given a monoidal functor $\mathbb{F}\colon \mathbb{C}\to\mathbb{D}$ and a semigroup $\mathsf{S}$ in $\mathbb{C}$, $(\mathbb{C},\mathsf{S})\xrightarrow{(\mathbb{F},id_{\widetilde{\mathbb{F}}(\mathsf{S})})}(\mathbb{D},\widetilde{\mathbb{F}}(\mathsf{S}))$ is an opcartesian morphism over $\mathbb{F}$ with domain $(\mathbb{C},\mathsf{S})$. The fibre at a monoidal category $\mathbb{C}$, that is, the subcategory of $\mathbf{IntSem}$ consisting of morphisms of the form $(\mathbb{C},\mathsf{S})\xrightarrow{(id_{\mathbb{C}},f)}(\mathbb{C},\mathsf{T})$, is isomorphic to $\mathbf{Sem}(\mathbb{C})$. 

$\mathbf{IntSem}$ has a subcategory of particular interest for us: let $\mathbf{IntSem}^{disc}$ be the subcategory of $\mathbf{IntSem}$ consisting only of its opcartesian morphisms. $\mathbf{IntSem}^{disc}$ is in fact (isomorphic to) the category of elements (see~\cite{MacLane}) of the functor $\mathbf{MonCat}\xrightarrow{\mathit{Sem}}\mathbf{Cat}\xrightarrow{\mathit{Ob}}\mathbf{Set}$ and thus corresponds to a discrete opfibration. Lemma~\ref{lem:functoriality_mult_with_respect_to_monoidal_functors} tells us that there is a functor $\mathit{Mult}_{disc}\colon \mathbf{IntSem}^{disc}\to \mathbf{Mon}$, acting as $\mathit{Mult}_{disc}((\mathbb{C},\mathsf{S})\xrightarrow{(\mathbb{F},id_{\widetilde{\mathbb{F}}(\mathsf{S})})}(\mathbb{D},\widetilde{\mathbb{F}}(\mathsf{S})):=\mathsf{Mult}_{\mathbb{C}}(\mathsf{S})\xrightarrow{\mathit{Mult}_{disc}(\mathsf{S};{\mathbb{F}})}\mathsf{Mult}_{\mathbb{D}}(\widetilde{\mathbb{F}}(\mathsf{S}))$.

\subsection{The convolution functor as an initial object}

\subsubsection{The domain fibration}
Let $\mathbb{F}=(F,\Phi,\phi),\mathbb{G}=(G,\Psi,\psi)$ be monoidal functors from $\mathbb{C}=(C,\otimes_C,I)$ to $\mathbb{D}=(D,\otimes_D,J)$. Let $\mathit{MonNat}(\mathbb{F},\mathbb{G})$ be the set of all monoidal natural transformations from $\mathbb{F}$ to $\mathbb{G}$, that is, $\alpha\in\mathit{MonNat}(\mathbb{F},\mathbb{G})$ (also written $\alpha\colon \mathbb{F}\Rightarrow\mathbb{G}\colon \mathbb{C}\to\mathbb{D}$) iff $\alpha\in \mathit{Nat}(F,G)$ and the following diagram commutes.
\begin{equation}\label{diag:mon_nat}
\xymatrix@R=1em{
Fc \otimes_D Fd\ar[r]^{\alpha_c\otimes_D\alpha_d}\ar[d]_{\Phi_{c,d}}&Gc\otimes_D Gd\ar[d]^{\Psi_{c,d}}&&&\ar[ld]_{\phi}\ar[rd]^{\psi}J&\\
F(c\otimes_C d)\ar[r]_{\alpha_{c\otimes_C d}}&G(c\otimes_C d)&&FI\ar[rr]_{\alpha_I}&&GI
}
\end{equation}
Note that  a monoidal natural transformation $\alpha\colon \mathbb{F}\Rightarrow\mathbb{G}\colon \mathbb{C}\to\mathbb{D}$ lifts to a natural transformation $\tilde{\alpha}\colon \widetilde{\mathbb{F}}\Rightarrow\widetilde{\mathbb{G}}\colon \mathbf{Sem}(\mathbb{C})\to\mathbf{Sem}(\mathbb{D})$ with $\tilde{\alpha}_{\mathsf{S}}=\alpha_S$, where $S$ stands for the underlying object of a semigroup object $\mathsf{S}$.

Let $\mathbf{MonCat}\Downarrow \mathbb{Set}$ be the category with 
\begin{enumerate}
\item objects the lax monoidal functors $\mathbb{C}\xrightarrow{\mathbb{F}}\mathbb{Set}$ for varying monoidal categories $\mathbb{C}$,
\item hom-sets $(\mathbf{MonCat}\Downarrow \mathbb{Set})((\mathbb{C}\xrightarrow{\mathbb{F}}\mathbb{Set}),(\mathbb{C}'\xrightarrow{\mathbb{F}'}\mathbb{Set}))$ the pairs $(\mathbb{H},\alpha)$, where $\mathbb{C}\xrightarrow{\mathbb{H}}\mathbb{D}$ is a monoidal functor, and $\alpha\colon \mathbb{F}\Rightarrow\mathbb{F}'\circ\mathbb{H}\colon \mathbb{C}\to\mathbb{Set}$ is a monoidal natural transformation.\end{enumerate}
There is an obvious projection functor $\mathbf{MonCat}\Downarrow\mathbb{Set}\xrightarrow{\mathit{dom}}\mathbf{MonCat}$ which is a split fibration as it arises from the Grothendieck's construction applied to  the functor $(-)^*\colon \mathbf{MonCat}^{op}\to\mathbf{Cat}$ given by $\mathbb{C}^*:=\mathbb{Set}^{\mathbb{C}}$, that is, $\mathbb{C}^*$ is the category whose objects are monoidal functors $\mathbb{C}\to\mathbb{Set}$ and whose morphisms are monoidal natural transformations with vertical composition of the underlying natural transformations, and for a monoidal functor $\mathbb{H}\colon \mathbb{C}\to\mathbb{D}$, $\mathbb{H}^*\colon \mathbb{Set}^{\mathbb{D}}\to\mathbb{Set}^{\mathbb{C}}$ is the functor given by $(\mathbb{F}\colon \mathbb{D}\to\mathbb{Set})\mapsto (\mathbb{F}\circ\mathbb{H}\colon \mathbb{C}\to\mathbb{Set})$ and $(\alpha\colon \mathbb{F}\Rightarrow\mathbb{G}\colon \mathbb{D}\to\mathbb{Set})\mapsto (\alpha_{\mathbb{H}}\colon \mathbb{F}\circ\mathbb{H}\Rightarrow\mathbb{G}\circ\mathbb{H}\colon \mathbb{C}\to\mathbb{Set})$, where the underlying natural transformation of $\alpha_{\mathbb{H}}$ is $\alpha_H$, with $H$  the underlying functor of $\mathbb{H}$.

\subsubsection{The Yoneda lemma for monoidal functors}

Let $\mathbb{C}$ be a monoidal category.

Let us  define the following monoidal functor $\mathbb{G}_I=(G_I,\Gamma,\gamma)\colon \mathbb{C}\to \mathbb{C}^{op}\times\mathbb{C}$ as follows: its underlying functor is $G_I\colon C\to C^{op}\times C$, $G_I(c\xrightarrow{f}d):=(I\xrightarrow{id_I}I,c\xrightarrow{f}d)$, $\Gamma_{c,d}\colon G_I(c)\otimes G_I(d)=(I\otimes I,c\otimes d)\xrightarrow{(\lambda_I^{-1},id_{c\otimes d})}G_I(c\otimes d)=(I,c\otimes d)$, and $\gamma=(I,I)\xrightarrow{(id_I,id_I)}G_I(I)=(I,I)$. The composite monoidal functor $\mathbb{Conv}\circ\mathbb{G}_I$ is denoted $\mathbb{Conv}_I=(C(I,-),\Theta^C,\theta^C)$ and of course, $\widetilde{\mathbb{Conv}_I}=\mathsf{Conv}_I$, since $\Theta^C_{c,d}\colon C(I,c)\times C(I,d)\to C(I,c\otimes d), (f,g)\mapsto (f\otimes g)\circ\lambda_I^{-1}$ and $\theta^C\colon 1\to C(I,I)$, $0\mapsto id_I$. 

The Yoneda Lemma \cite{MacLane} tells us that for each functor $F\colon C\to\mathbf{Set}$ and each $C$-object $c$, $\mathit{Nat}(C(c,-),F)\simeq Fc$ under $\alpha\mapsto \alpha_c(id_c)$, with inverse $x\in Fc\mapsto \mathbf{x}$ where $\mathbf{x}_d(f):=Ff(x)$, $f\in C(c,d)$. 

\begin{lemma}\label{lem:Yoneda_for_mon_nat}
Let $\mathbb{F}=(F,\Phi,\phi)\colon \mathbb{C}\to\mathbb{Set}$ be a monoidal functor. Then,  $\mathit{MonNat}(\mathbb{Conv}_I,\mathbb{F})\simeq \{\, \phi(0)\,\}$. Put another way, $\mathbb{Conv}_I$ is an initial object in the category $\mathbb{Set}^{\mathbb{C}}$.
\end{lemma}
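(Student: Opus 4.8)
The plan is to reduce the statement to the ordinary Yoneda Lemma recalled just above (applied at $c=I$), which identifies $\mathit{Nat}(C(I,-),F)$ with $FI$ via $\alpha\mapsto\alpha_I(id_I)$, and then to single out which of these underlying natural transformations satisfy the two monoidal coherence constraints of~\eqref{diag:mon_nat}. I expect the singleton to be witnessed by $x:=\phi(0)\in FI$, so the argument splits into uniqueness (the unit constraint forces $\alpha_I(id_I)=\phi(0)$) and existence (the Yoneda transformation attached to $\phi(0)$ really is monoidal).

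For uniqueness, I would first observe that a monoidal natural transformation $\alpha\colon\mathbb{Conv}_I\Rightarrow\mathbb{F}$ must make the unit square of~\eqref{diag:mon_nat} commute; since the unit constraint of $\mathbb{Conv}_I$ is $\theta^C$ with $\theta^C(0)=id_I$, this square reads $\alpha_I\circ\theta^C=\phi$ as maps $1\to FI$, hence $\alpha_I(id_I)=\phi(0)$. By Yoneda, $\alpha$ is entirely determined by the element $\alpha_I(id_I)$, so there is at most one monoidal natural transformation, namely $\mathbf{x}$ with $x=\phi(0)$.

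For existence, I would take $x:=\phi(0)$, so that the candidate is $\mathbf{x}_d(f)=Ff(\phi(0))$ for $f\in C(I,d)$. Its unit condition is immediate as $\mathbf{x}_I(\theta^C(0))=F(id_I)(\phi(0))=\phi(0)$. The real content is the tensor square of~\eqref{diag:mon_nat}, namely $\Phi_{c,d}(\mathbf{x}_c(f),\mathbf{x}_d(g))=\mathbf{x}_{c\otimes d}(\Theta^C_{c,d}(f,g))$ for $f\in C(I,c)$ and $g\in C(I,d)$. Applying naturality of $\Phi$ along the morphisms $f$ and $g$ rewrites the left-hand side as $F(f\otimes g)(\Phi_{I,I}(\phi(0),\phi(0)))$, while unwinding $\Theta^C_{c,d}(f,g)=(f\otimes g)\circ\lambda_I^{-1}$ together with functoriality of $F$ rewrites the right-hand side as $F(f\otimes g)(F(\lambda_I^{-1})(\phi(0)))$.

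Consequently everything reduces to the single identity $\Phi_{I,I}(\phi(0),\phi(0))=F(\lambda_I^{-1})(\phi(0))$ in $FI$, which is the step I expect to be the main obstacle. I would extract it from the left-unit coherence axiom of the lax monoidal functor $\mathbb{F}$ taken at the object $I$, namely $F(\lambda_I)\circ\Phi_{I,I}\circ(\phi\times id_{FI})=\lambda^{\mathbb{Set}}_{FI}$, where $\lambda^{\mathbb{Set}}$ denotes the left unit isomorphism of $\mathbb{Set}$. Evaluating both sides at $(0,\phi(0))\in 1\times FI$ and using that $\lambda^{\mathbb{Set}}$ is the projection onto the second factor gives $F(\lambda_I)(\Phi_{I,I}(\phi(0),\phi(0)))=\phi(0)$, and applying the inverse isomorphism $F(\lambda_I^{-1})$ yields the required identity. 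This makes $\mathbf{x}$ monoidal, so $\mathit{MonNat}(\mathbb{Conv}_I,\mathbb{F})=\{\mathbf{x}\}\simeq\{\phi(0)\}$; as this holds for every object $\mathbb{F}$ of $\mathbb{Set}^{\mathbb{C}}$, the functor $\mathbb{Conv}_I$ is initial therein.
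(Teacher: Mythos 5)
Your proof is correct and follows essentially the same route as the paper: uniqueness from the unit constraint together with the Yoneda lemma, and existence by using naturality of $\Phi$ and of $\mathbf{x}$ to reduce the tensor square of~(\ref{diag:mon_nat}) to a single identity at the unit object. The only difference is that the identity $\Phi_{I,I}(\phi(0),\phi(0))=F(\lambda_I^{-1})(\phi(0))$, which the paper covers with ``by direct inspection'' of Diag.~(\ref{diag:mon_cat_for_x}), is in your write-up explicitly derived from the left-unit coherence axiom of $\mathbb{F}$ evaluated at $(0,\phi(0))$ --- a welcome clarification, and exactly the right justification.
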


\begin{proof}
Let $\alpha\colon \mathbb{Conv}_I\Rightarrow \mathbb{F}$ be a monoidal natural transformation. Compatibility with the unit constraints (the rightmost diagram in Diag.~(\ref{diag:mon_nat})) implies that $\alpha_I(id_I)=\phi(0)$. Conversely one has to check that $\mathbf{x}\colon \mathbb{Conv}_I\Rightarrow\mathbb{F}$ for $x:=\phi(0)$. As just explained above, compatibility with the unit constraint is immediate so it remains to check that the leftmost diagram in Diag.~(\ref{diag:mon_nat}) commutes. By direct inspection one first observes that the following diagram commutes. 
\begin{equation}\label{diag:mon_cat_for_x}
\xymatrix@R=1em{
C(I,I)\times C(I,I)\ar[r]^-{\mathbf{x}_I\times\mathbf{x}_I}\ar[d]_{\Theta^C_{I,I}}&FI\times FI\ar[d]^{\Phi_{I,I}}\\
C(I,I\otimes I)\ar[r]_{\mathbf{x}_{I\otimes I}}&F(I\otimes I)
}
\end{equation}
Now let $(f,g)\in C(I,c)\times C(I,d)$. Then, 
\begin{equation}
\begin{array}{lll}
\Phi_{c,d}(\mathbf{x}_c(f),\mathbf{x}_d(g))&=&(\Phi_{c,d}\circ (F(f)\times F(g))\circ (\mathbf{x}_I\times\mathbf{x}_I))(id_I,id_I)\\
&=&(F(f\otimes g)\circ\Phi_{I,I}\circ (\mathbf{x}_I\times \mathbf{x}_I))(id_I,id_I)\\
&&\mbox{(by naturality of $\Phi$)}\\
&=&(F(f\otimes g)\circ \mathbf{x}_{I\otimes I}\circ\Theta^C_{I,I})(id_I,id_I)\\
&&\mbox{(by commutativity of Diag.~(\ref{diag:mon_cat_for_x}))}\\
&=&(\mathbf{x}_{c\otimes d}\circ C(I,f\otimes g)\circ \Theta^C_{I,I})(id_I,id_I)\\
&&\mbox{(by naturality of $\mathbf{x}$)}\\
&=&\mathbf{x}_{c\otimes d}(\Theta^C_{c,d}(f,g)).
\end{array}
\end{equation}
\end{proof}

\subsubsection{The convolution section functor}

Let $\mathit{Sect}(\mathit{dom})$ be the set of all sections of the functor $\mathit{dom}$, that is, $\mathcal{X}\in \mathit{Sect}(\mathit{dom})$ iff $\mathcal{X}\colon \mathbf{MonCat}\to\mathbf{MonCat}\Downarrow\mathbb{Set}$ such that $\mathit{dom}\circ\mathcal{X}=id_{\mathbf{MonCat}}$. So for a section $\mathcal{X}$ one has $\mathcal{X}(\mathbb{C}):=\mathbb{X}(\mathbb{C})=(X(\mathbb{C}),\Psi^{\mathcal{X},\mathbb{C}},\psi^{\mathcal{X},\mathbb{C}})\colon \mathbb{C}\to\mathbb{Set}$ and given a monoidal functor $\mathbb{F}\colon \mathbb{C}\to\mathbb{D}$, $\mathcal{X}(\mathbb{F}):=(\mathbb{F},\mathbb{x}^{\mathcal{X},\mathbb{F}})\colon \mathbb{X}(\mathbb{C})\to\mathbb{X}(\mathbb{D})$ in $\mathbf{MonCat}\Downarrow\mathbb{Set}$. 

$\mathit{Sect}(\mathit{dom})$ is the set of objects of a subcategory $\mathbf{Sect}(\mathit{dom})$ of $(\mathbf{MonCat}\Downarrow\mathbb{Set})^{\mathbf{MonCat}}$, whose hom-set  $\mathbf{Sect}(dom)(\mathcal{X},\mathcal{X}')$ consists of the natural transformations $\alpha\colon \mathcal{X}\Rightarrow \mathcal{X}'\colon \mathbf{MonCat}\to \mathbf{MonCat}\Downarrow\mathbb{Set}$ such that for each monoidal category $\mathbb{C}$, $\mathit{dom}(\alpha_{\mathbb{C}})=id_{\mathbb{C}}$. Whence $\alpha_{\mathbb{C}}=(id_{\mathbb{C}},\alpha^m(\mathbb{C}))\colon \mathbb{X}(\mathbb{C})\to \mathbb{X}'(\mathbb{C})$ where $\alpha^m(\mathbb{C})\colon \mathbb{X}(\mathbb{C})\Rightarrow \mathbb{X}'(\mathbb{C})\colon \mathbb{C}\to\mathbb{Set}$ is a monoidal natural transformation. Note that by naturality, for each monoidal functor $\mathbb{F}\colon\mathbb{C}\to\mathbb{D}$, and for each $C$-object $c$, \begin{equation}\label{diag:relation_between_x_and_alpha}
\mathbb{x}_c^{\mathcal{X}',\mathbb{F}}\circ \alpha^{m}(\mathbb{C})_c=\alpha^m(\mathbb{D})_{Fc}\circ\mathbb{x}^{\mathcal{X},\mathbb{F}}_c.
\end{equation} 

Among the sections of $\mathit{dom}$ there is a canonical one which is now described. Let $\mathbb{F}=(F,\Phi,\phi)\colon \mathbb{C}=(C,\otimes,I)\to\mathbb{D}=(D,\otimes,J)$ be a monoidal functor. 
According to Lemma~\ref{lem:Yoneda_for_mon_nat}, $\mathit{MonNat}(\mathbb{Conv}_I,\mathbb{Conv}_J\circ\mathbb{F})$ has only one element namely $\mathbb{F}^\sharp:=\mathbf{x}$ with $x=D(J,\phi)(\theta^D(0))=\phi$. One then defines  the {\em convolution section} $\mathpzc{Conv}\colon \mathbf{MonCat}\to \mathbf{MonCat}\Downarrow \mathbb{Set}$ by $\mathpzc{Conv}(\mathbb{C}):=\mathbb{Conv}_I\colon \mathbb{C}\to\mathbb{Set}$ and given $\mathbb{F}\colon \mathbb{C}\to\mathbb{D}$, $\mathpzc{Conv}(\mathbb{F}):=(\mathbb{F},\mathbb{F}^\natural)$ where $\mathbb{F}^\natural$ is as above.

Let $\alpha\in \mathbf{Sect}(\mathit{dom})(\mathpzc{Conv},\mathcal{X})$. Then, for each monoidal category $\mathbb{C}$, $\alpha^{m}(\mathbb{C})\in \mathit{MonNat}(\mathbb{Conv}_I,\mathcal{X})=\{\, \mathbf{x}^{\mathcal{X},\mathbb{C}}\,\}$ (again by Lemma~\ref{lem:Yoneda_for_mon_nat}) with $x^{\mathcal{X},\mathbb{C}}:=\psi^{\mathcal{X},\mathbb{c}}(0)$. It then follows easily that $\mathpzc{Conv}$ is an initial object in $\mathbf{Sect}(\mathit{dom})$.

\subsubsection{Relation with $\mathbf{IntSem}$}

Let $\mathcal{X}\in \mathit{Sect}(\mathit{dom})$. Let $\mathsf{S}=(S,\mu_S)$ (resp. $\mathsf{T}=(T,\mu_T)$) be a semigroup in $\mathbb{C}=(C,\otimes,I)$ (resp. $\mathbb{D}=(D,\otimes,J)$). Let $(\mathbb{F},f)\in \mathbf{IntSem}((\mathbb{C},\mathsf{S}),(\mathbb{D},\mathsf{T}))$. Then, one has a map 
$\mathsf{X}(\mathbb{F},f):=X(\mathbb{C})(S)\xrightarrow{\mathbb{x}_S^{\mathcal{X},\mathbb{F}}}X(\mathbb{D})(FS)\xrightarrow{X(\mathbb{D})(f)}X(\mathbb{D})(T)$. 

\begin{example}
In the case where $\mathcal{X}=\mathpzc{Conv}$, one has $\mathsf{Conv}(\mathbb{F},f)= C(I,S)\xrightarrow{\mathbb{F}^\sharp_S}D(J,FS)\xrightarrow{D(J,f)}D(J,T)$, $g\mapsto f\circ F(g)\circ\phi$.
\end{example}

Since $\widetilde{\mathbb{X}(\mathbb{D})}(f)=X(\mathbb{D})(f)\in\mathbf{Sem}(\widetilde{\mathbb{X}(\mathbb{D})}(\widetilde{\mathbb{F}}\mathsf{S}),\widetilde{\mathbb{X}(\mathbb{D})}\mathsf{T})$ and $\widetilde{\mathbb{x}^{\mathcal{X},\mathbb{F}}}_{\mathsf{S}}=\mathbb{x}^{\mathcal{X},\mathbb{F}}_S\in \mathbf{Sem}(\widetilde{\mathbb{X}(\mathbb{C})}\mathsf{S},\widetilde{\mathbb{X}(\mathbb{D})}(\widetilde{\mathbb{F}}\mathsf{S}))$, one has $\mathsf{X}(\mathbb{F},f)\in \mathbf{Sem}(\widetilde{\mathbb{X}(\mathbb{C})}\mathsf{S},\widetilde{\mathbb{X}(\mathbb{D})}\mathsf{T})$.

\begin{lemma}
With the above notation $\mathsf{X}$ is a functor from $\mathbf{IntSem}$ to $\mathbf{Sem}$ with $\mathsf{X}(\mathbb{C},\mathsf{S}):=\widetilde{\mathbb{X}(\mathbb{C})}\mathsf{S}$.
\end{lemma}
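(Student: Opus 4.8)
The object assignment $\mathsf{X}(\mathbb{C},\mathsf{S}):=\widetilde{\mathbb{X}(\mathbb{C})}\mathsf{S}$ does land in $\mathbf{Sem}=\mathbf{Sem}(\mathbb{Set})$, since $\mathbb{X}(\mathbb{C})\colon\mathbb{C}\to\mathbb{Set}$ is a lax monoidal functor and $\widetilde{(-)}$ carries semigroup objects to semigroup objects; likewise the morphism assignment $\mathsf{X}(\mathbb{F},f)=X(\mathbb{D})(f)\circ\mathbb{x}^{\mathcal{X},\mathbb{F}}_S$ is already known, from the discussion immediately preceding the statement, to be a homomorphism of ordinary semigroups $\widetilde{\mathbb{X}(\mathbb{C})}\mathsf{S}\to\widetilde{\mathbb{X}(\mathbb{D})}\mathsf{T}$. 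Hence both assignments are well defined and it only remains to check the two functoriality axioms.

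The key preliminary step is to read off the behaviour of the second components of $\mathcal{X}$ under composition and identities in $\mathbf{MonCat}\Downarrow\mathbb{Set}$. Since $\mathcal{X}$ is a functor, applying it to $\mathbb{G}\circ\mathbb{F}$ and comparing with $\mathcal{X}(\mathbb{G})\circ\mathcal{X}(\mathbb{F})$ --- whose composite in $\mathbf{MonCat}\Downarrow\mathbb{Set}$ is computed by the Grothendieck rule for $\mathit{dom}$, namely whiskering $\mathbb{x}^{\mathcal{X},\mathbb{G}}$ by $\mathbb{F}$ and pasting with $\mathbb{x}^{\mathcal{X},\mathbb{F}}$ --- gives, at the underlying object $S$ of $\mathsf{S}$, the identity $\mathbb{x}^{\mathcal{X},\mathbb{G}\circ\mathbb{F}}_S=\mathbb{x}^{\mathcal{X},\mathbb{G}}_{FS}\circ\mathbb{x}^{\mathcal{X},\mathbb{F}}_S$. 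Similarly $\mathcal{X}(\mathbb{id}_{\mathbb{C}})$ must be the identity of $\mathbb{X}(\mathbb{C})$ in $\mathbf{MonCat}\Downarrow\mathbb{Set}$, so that $\mathbb{x}^{\mathcal{X},\mathbb{id}_{\mathbb{C}}}$ is the identity natural transformation and $\mathbb{x}^{\mathcal{X},\mathbb{id}_{\mathbb{C}}}_S=id_{X(\mathbb{C})(S)}$. With these in hand the identity axiom is immediate: $\mathsf{X}(\mathbb{id}_{\mathbb{C}},id_{\mathsf{S}})=X(\mathbb{C})(id_S)\circ\mathbb{x}^{\mathcal{X},\mathbb{id}_{\mathbb{C}}}_S=id_{X(\mathbb{C})(S)}$, using functoriality of the underlying functor $X(\mathbb{C})$.

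For composition, given $(\mathbb{C},\mathsf{S})\xrightarrow{(\mathbb{F},f)}(\mathbb{D},\mathsf{T})\xrightarrow{(\mathbb{G},g)}(\mathbb{E},\mathsf{U})$, recall the composite in $\mathbf{IntSem}$ is $(\mathbb{G}\circ\mathbb{F},\,g\circ\widetilde{\mathbb{G}}(f))$, whose second component has underlying $E$-morphism $g\circ G(f)$. I would then compute
\begin{align*}
\mathsf{X}\big((\mathbb{G},g)\circ(\mathbb{F},f)\big)
&=X(\mathbb{E})(g)\circ X(\mathbb{E})(G(f))\circ\mathbb{x}^{\mathcal{X},\mathbb{G}\circ\mathbb{F}}_S\\
&=X(\mathbb{E})(g)\circ X(\mathbb{E})(G(f))\circ\mathbb{x}^{\mathcal{X},\mathbb{G}}_{FS}\circ\mathbb{x}^{\mathcal{X},\mathbb{F}}_S\\
&=X(\mathbb{E})(g)\circ\mathbb{x}^{\mathcal{X},\mathbb{G}}_{T}\circ X(\mathbb{D})(f)\circ\mathbb{x}^{\mathcal{X},\mathbb{F}}_S
=\mathsf{X}(\mathbb{G},g)\circ\mathsf{X}(\mathbb{F},f),
\end{align*}
where the first equality uses the definition of $\mathsf{X}$ together with functoriality of $X(\mathbb{E})$, the second the component identity above, and the third the naturality square of $\mathbb{x}^{\mathcal{X},\mathbb{G}}\colon\mathbb{X}(\mathbb{D})\Rightarrow\mathbb{X}(\mathbb{E})\circ\mathbb{G}$ at the $\mathbb{D}$-morphism $f\colon FS\to T$. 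The one genuinely delicate point, and the step I would be most careful about, is making the Grothendieck composition law for $\mathit{dom}$ fully explicit and tracking the whiskering of $\mathbb{x}^{\mathcal{X},\mathbb{G}}$ by $\mathbb{F}$ correctly, so that the component formula $\mathbb{x}^{\mathcal{X},\mathbb{G}\circ\mathbb{F}}_S=\mathbb{x}^{\mathcal{X},\mathbb{G}}_{FS}\circ\mathbb{x}^{\mathcal{X},\mathbb{F}}_S$ has its indices in the right place; everything else is bookkeeping with naturality and functoriality.
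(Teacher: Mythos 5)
Your proof is correct and follows essentially the same route as the paper's: unfold the definition of $\mathsf{X}$ on a composite, apply functoriality of the underlying functor $X(\mathbb{E})$, invoke $\mathbb{x}^{\mathcal{X},\mathbb{G}\circ\mathbb{F}}_S=\mathbb{x}^{\mathcal{X},\mathbb{G}}_{FS}\circ\mathbb{x}^{\mathcal{X},\mathbb{F}}_S$ (functoriality of the section $\mathcal{X}$), and finish with the naturality square of $\mathbb{x}^{\mathcal{X},\mathbb{G}}$ at $f$. Your explicit derivation of the component identity from the Grothendieck composition law in $\mathbf{MonCat}\Downarrow\mathbb{Set}$ is the step the paper compresses into the phrase ``by functoriality,'' and it is placed correctly.
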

\begin{proof}
Let $\mathcal{X}$ be a section of $\mathit{dom}$. Using functoriality of $\mathcal{X}$ it is easy to see that $\mathsf{X}(id_{\mathbb{C}},id_{\mathsf{S}})=id_{\mathsf{X}(\mathbb{C},\mathsf{S})}$. Let $\mathbb{F}\colon \mathbb{A}\to\mathbb{B}$ and $\mathbb{G}\colon \mathbb{B}\to\mathbb{C}$ be monoidal functors. Let $f\in \mathbf{Sem}(\mathbb{B})(\widetilde{\mathbb{F}}\mathsf{S},\mathsf{T})$ and $g\in \mathbb{Sem}(\mathbb{C})(\widetilde{\mathbb{G}}\mathsf{T},\mathsf{U})$. One has $\mathsf{X}((\mathbb{G},g)\circ (\mathbb{F},f))=\mathsf{X}(\mathbb{G}\circ\mathbb{F},g\circ G(f))=X(\mathbb{C})(g\circ G(f))\circ \mathbb{x}_{{S}}^{\mathcal{X},\mathbb{G}\circ\mathbb{F}}=X(\mathbb{C})(g)\circ X(\mathbb{C})(G(f))\circ \mathbb{x}_{{S}}^{\mathcal{X},\mathbb{G}\circ\mathbb{F}}=X(\mathbb{C})(g)\circ X(\mathbb{C})(G(f))\circ \mathbb{x}^{\mathcal{X},\mathbb{G}}_{FS}\circ \mathbb{x}^{\mathcal{X},\mathbb{F}}_S$ (by functoriality) $=X(\mathbb{C})(g)\circ \mathbb{x}_{\mathsf{T}}^{\mathcal{X},\mathbb{G}}\circ X(\mathbb{B})(f)\circ \mathbb{x}^{\mathcal{X},\mathbb{F}}_S$ (by naturality of $\mathbb{x}^{\mathcal{X},\mathbb{G}}$) $=\mathsf{X}(\mathbb{G},g)\circ\mathsf{X}(\mathbb{F},f)$.
\end{proof}

\begin{remark}
When $\mathcal{X}=\mathpzc{Conv}$, one let $\mathsf{X}$ be  $\mathsf{Conv}\colon \mathbf{IntSem}\to\mathbf{Sem}$. In details, $\mathsf{Conv}$ acts as $((\mathbb{C},\mathsf{S})\xrightarrow{(\mathbb{F},f)}(\mathbb{D},\mathsf{T}))\mapsto (\mathsf{Conv}_I(\mathsf{S})\xrightarrow{D(\phi,f)\circ F_{I,S}}\mathsf{Conv}_J(\mathsf{T}))$, that is, $\mathsf{Conv}(\mathbb{F},f)(g)=f\circ F(g)\circ\phi\in D(J,T)$ for $g\in C(I,S)$. 
\end{remark}

Now let $\mathcal{X},\mathcal{X}'\in \mathit{Sect}(dom)$ and let $\alpha\in \mathbf{Sect}(dom)(\mathcal{X},\mathcal{X}')$. Then, of course, for each semigroup $\mathsf{S}=(S,\mu_S)$ of $\mathbb{C}$, $\widetilde{\alpha^m(\mathbb{C})}_{\mathsf{S}}=\alpha^m(\mathbb{C})_S\in\mathbf{Sem}(\widetilde{\mathbb{X}(\mathbb{C})}\mathsf{S},\widetilde{\mathbb{X}'(\mathbb{C})}\mathsf{S})=\mathbf{Sem}(\mathsf{X}(\mathbb{C},\mathsf{S}),\mathsf{X}'(\mathbb{C},\mathsf{S}))$. The commutativity of the two internal cells of the diagram below, where $(\mathbb{F},f)\colon (\mathbb{C},\mathsf{S})\to(\mathbb{D},\mathsf{T})$ is a $\mathbf{IntSem}$-morphism, shows that $\widetilde{\alpha^m}:=(\widetilde{\alpha^m(\mathbb{C})}_{\mathsf{S}})_{(\mathbb{C},\mathsf{S})}\colon \mathsf{X}\Rightarrow \mathsf{X}'\colon \mathbf{IntSem}\to\mathbf{Sem}$ is a natural transformation.  (The top square commutes by Eq.~(\ref{diag:relation_between_x_and_alpha}) while the bottom square commutes by naturality of $\alpha^m(\mathbb{D})$.)

\begin{equation}
\xymatrix{
{{X}(\mathbb{C})}\mathsf{S}\ar[d]_{\mathbb{x}_{S}^{\mathcal{X},\mathbb{F}}}\ar[r]^{\alpha^m(\mathbb{C})_S}&{{X}'(\mathbb{C})}\mathsf{S}\ar[d]^{\mathbb{x}_{S}^{\mathcal{X}',\mathbb{F}}}\\
\ar[d]_{X(\mathbb{D})(f)}{X}(\mathbb{D})FS \ar[r]_{\alpha^m(\mathbb{D})_{FS}}&{X}'(\mathbb{D})FS\ar[d]^{X'(\mathbb{D})(f)}\\
{X}(\mathbb{D})\mathsf{T}\ar[r]_{\alpha^m(\mathbb{D})_{T}}&{\mathbb{X}'(\mathbb{D})}\mathsf{T}
}
\end{equation}

The following result then is clear.
\begin{lemma}
The correspondences $\mathcal{X}\in \mathit{Sect}(\mathit{dom})\mapsto \mathsf{X}\in \mathit{Ob}(\mathbf{Sem}^{\mathbf{IntSem}})$ and $\alpha\in \mathbf{Sect}(\mathit{dom})(\mathcal{X},\mathcal{X}')\mapsto \widetilde{\alpha^m}\colon \mathsf{X}\Rightarrow \mathsf{X}'$ provide a functor from $\mathbf{Sect}(dom)$ to $\mathbf{Sem}^{\mathbf{IntSem}}$.
\end{lemma}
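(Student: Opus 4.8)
The plan is to check that the two prescribed assignments respect identities and composition; their well-definedness is already in hand, the object assignment $\mathcal{X}\mapsto\mathsf{X}$ by the preceding lemma (which exhibits $\mathsf{X}$ as a functor $\mathbf{IntSem}\to\mathbf{Sem}$, hence an object of $\mathbf{Sem}^{\mathbf{IntSem}}$) and the morphism assignment $\alpha\mapsto\widetilde{\alpha^m}$ by the commutative diagram displayed just above (which shows $\widetilde{\alpha^m}\colon\mathsf{X}\Rightarrow\mathsf{X}'$ is natural). First I would isolate the two elementary facts that make everything go through. On one hand, composition of $\mathbf{Sect}(\mathit{dom})$-morphisms $\mathcal{X}\xrightarrow{\alpha}\mathcal{X}'\xrightarrow{\beta}\mathcal{X}''$ lying over identities of $\mathbf{MonCat}$ is computed componentwise, so that $(\beta\circ\alpha)^m(\mathbb{C})=\beta^m(\mathbb{C})\circ\alpha^m(\mathbb{C})$ (vertical composition in $\mathbb{Set}^{\mathbb{C}}$) for every monoidal category $\mathbb{C}$, while $(\mathrm{id}_{\mathcal{X}})^m(\mathbb{C})$ is the identity monoidal natural transformation on $\mathbb{X}(\mathbb{C})$. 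On the other hand, the lifting $\beta\mapsto\widetilde{\beta}$ from monoidal natural transformations to natural transformations of the induced semigroup functors, recalled earlier in this section, acts by $\widetilde{\beta}_{\mathsf{S}}=\beta_S$; being the identity on components, it sends identities to identities and vertical composites to vertical composites.

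With these two facts the verification is immediate. For identities, $(\mathrm{id}_{\mathcal{X}})^m(\mathbb{C})_S=\mathrm{id}_{X(\mathbb{C})(S)}$, so the component of $\widetilde{(\mathrm{id}_{\mathcal{X}})^m}$ at $(\mathbb{C},\mathsf{S})$ is $\mathrm{id}_{\widetilde{\mathbb{X}(\mathbb{C})}\mathsf{S}}=\mathrm{id}_{\mathsf{X}(\mathbb{C},\mathsf{S})}$, whence $\widetilde{(\mathrm{id}_{\mathcal{X}})^m}=\mathrm{id}_{\mathsf{X}}$. For composition, evaluating at an arbitrary object $(\mathbb{C},\mathsf{S})$ of $\mathbf{IntSem}$ and combining the two facts yields
\[
\widetilde{(\beta\circ\alpha)^m}_{(\mathbb{C},\mathsf{S})}=(\beta\circ\alpha)^m(\mathbb{C})_S=\beta^m(\mathbb{C})_S\circ\alpha^m(\mathbb{C})_S=\widetilde{\beta^m}_{(\mathbb{C},\mathsf{S})}\circ\widetilde{\alpha^m}_{(\mathbb{C},\mathsf{S})},
\]
and since vertical composition in the functor category $\mathbf{Sem}^{\mathbf{IntSem}}$ is taken componentwise, this is exactly $\widetilde{(\beta\circ\alpha)^m}=\widetilde{\beta^m}\circ\widetilde{\alpha^m}$.

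I expect the only difficulty to be bookkeeping rather than mathematics: one must keep straight the three nested layers of vertical composition — in $\mathbf{Sect}(\mathit{dom})$, in each value category $\mathbb{Set}^{\mathbb{C}}$ of monoidal natural transformations, and in $\mathbf{Sem}^{\mathbf{IntSem}}$ — and observe that every link in the chain $\alpha\mapsto(\alpha^m(\mathbb{C}))_{\mathbb{C}}\mapsto\widetilde{\alpha^m}$ is defined purely componentwise and is therefore strictly compatible with these compositions. This componentwise nature is precisely why the result is asserted to be clear.
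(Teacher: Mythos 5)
Your proof is correct and is exactly the routine componentwise verification that the paper leaves implicit by declaring the result ``clear'': the well-definedness of both assignments is supplied by the preceding lemma and the displayed naturality square, and preservation of identities and composition follows because composition in $\mathbf{Sect}(\mathit{dom})$, in each $\mathbb{Set}^{\mathbb{C}}$, and in $\mathbf{Sem}^{\mathbf{IntSem}}$ is computed componentwise while $\widetilde{(-)}$ acts as the identity on components. Nothing is missing.
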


\subsection{The concretization homomorphism}\label{sec:concretization}

\begin{definition}\label{def:concretization_morphism}
Given a semigroup $\mathsf{S}=(S,\mu)$ in a monoidal category $\mathbb{C}$, according to Prop.~\ref{prop:first_func_of_mult} one has a homomorphism of monoids, called the {\em concretization homomorphism}, $$\mathit{Conc}_{\mathbb{C}}(\mathsf{S}):=\mathsf{Mult}_{\mathbb{C}}(\mathsf{S})\xrightarrow{\mathit{Mult}_{disc}(\mathsf{S};\mathbb{Conv}_I)}\mathsf{Mult}_{\mathbb{Set}}(\mathsf{Conv}_I(\mathsf{S}))=\mathsf{TrHull}(\mathsf{Conv}_{I}(\mathsf{S}))$$ which acts as $(L,R)\mapsto (C(id_I,L),C(id_I,R))$. Of course, $C(id_I,L)\colon C(I,S)\to C(I,S)$, $f\mapsto L\circ f$ and $C(id_I,R)\colon C(I,S)\to C(I,S)$, $f\mapsto R\circ f$. 
\end{definition}

\begin{remark}
When $C(I,-)$ is faithful, then $\mathsf{Mult}_{\mathbb{C}}(\mathsf{S})$ is (isomorphic to) a submonoid of $\mathsf{TrHull}(\mathsf{Conv}_I(\mathsf{S}))$. 
\end{remark}

\begin{definition}
A semigroup $\mathsf{S}$  in $\mathbb{C}$. $\mathsf{S}$ is said to be {\em concrete} when the concretization homomorphism $\mathit{Conc}_{\mathbb{C}}(\mathsf{S})\colon \mathsf{Mult}_{\mathbb{C}}(\mathsf{S})\to \mathsf{TrHull}(\mathsf{Conv}_I(\mathsf{S}))$ is onto. 
\end{definition}

Let us finally consider the composite $\mathsf{Conv}_I(\mathsf{S})\xrightarrow{\mathcal{M}_{\mathbb{C},\mathsf{S}}}|\mathsf{Mult}_{\mathbb{C}}(\mathsf{S})|\xrightarrow{\mathit{Conc}_{\mathbb{C}}(\mathsf{S})}|\mathsf{TrHull}(\mathsf{Conv}_I(\mathsf{S}))|$. It is the map $f\mapsto (C(I,L_f),C(I,R_f))$. Using Eqs.~(\ref{eq:computations_for_concretization_homomorphism}) and (\ref{eq:computations_for_concretization_homomorphism_2}), $C(I,L_f)=\mathfrak{L}_f$, and $C(I,R_f)=\mathfrak{R}_f$. Therefore, the above composition is nothing but $\mathfrak{M}_{ \mathsf{Conv}_{I}(\mathsf{S})}\colon \mathsf{Conv}_{I}(\mathsf{S})\to |\mathsf{TrHull}(\mathsf{Conv}_I(\mathsf{S}))|$.

\begin{example}\label{ex:concrete_sem}
\begin{enumerate}
\item Every ordinary semigroup  is concrete (as it follows easily from Example~\ref{ex:generalized_elements}).

\item The underlying semigroup object of any monoid object in any monoidal category is concrete. Indeed according to Prop.~\ref{prop:mult_alg_of_a_monoid}, for each monoid $\mathsf{M}$, $\mathcal{M}_{\mathbb{C},\mathsf{M}}\colon \mathsf{Conv}_I(|\mathsf{M}|)\simeq \mathsf{Mult}_{\mathbb{C}}(\mathsf{M})$. Moreover $\mathfrak{M}_{\mathsf{Conv}_I(\mathsf{M})}\colon \mathsf{Conv}_{I}(\mathsf{M})\simeq \mathsf{Mult}(|\mathsf{Conv}_I(\mathsf{M})|)$ is also an isomorphism of monoids (see the Introduction). Consequently, $\mathit{Conc}_{\mathbb{C}}(|\mathsf{M}|)=\mathfrak{M}_{\mathsf{Conv}_I(|\mathsf{M}|)}\circ \mathcal{M}^{-1}_{\mathbb{C},|\mathsf{M}|}=|\mathfrak{M}_{\mathsf{Conv}_I(\mathsf{M})}\circ \mathcal{M}^{-1}_{\mathbb{C},\mathsf{M}}|$ is an isomorphism of semigroups. 

\item Let $R$ be a unital and commutative ring. Then, any $R$-algebra $\mathsf{A}=(A,\mu)$, that is, a semigroup in ${}_R\mathbb{Mod}=({}_R\mathbf{Mod},\otimes_R,R)$, faithful as a module over itself, is concrete. Indeed, the usual forgetful functor ${}_{R}\mathbf{Mod}\to \mathbf{Set}$ is represented by the free module of rank one, that is, $|-|\simeq {}_R\mathbf{Mod}(R,-)$.   Let $(L,R)\in \mathit{TrHull}(|A|,*)$, with $x*y:=\mu(x\otimes y)$. Let $x,y,z\in |A|$ and let $\alpha\in R$. Then, $x*L(\alpha y+z)=R(x)*(\alpha y+z)=R(x)*(\alpha y)+R(x)*z=\alpha(R(x)*y)+R(x)*z=\alpha(x*L(y))+x*L(z)=x*(\alpha L(y)+L(z))$ so that $L(\alpha y+z)=\alpha L(y)+L(z)$ since $A$ is faithful. Similarly $R$ is also linear. Consequently, $(L,R)\in \mathit{Mult}_{\mathbb{Ab}}(\mathsf{A})$. 

\item According to~\cite{Wang} any commutative Banach algebra $\mathsf{B}=(B,*)$ without order (that is, $x*y=0$ for all $x$ implies $x=0$) and such that $B^2=B$ (that is, $*$ is onto), is concrete in $\mathbb{Ban}=(\mathbf{Ban},\hat{\otimes},\mathbb{C})$.  In fact it is proven in~\cite{Wang} that under the condition that $\mathsf{B}$ is without order and with $|-|$ the forgetful functor into $\mathbf{Set}$, $L\in \mathbf{Act}_{(|B|,*)}((|B|,*),(|B|,*))$ iff $(L,L)\in \mathit{TrHull}(|B|,*)$ iff $(L,L)\in \mathit{Mult}_{\mathbb{Ban}}(\mathsf{B})$. So if one assumes furthermore that $B^2=B$, that is, $*$ is epi, and thus so is the multiplication as a morphism $B\hat{\otimes}B\to B$, since faithful functors reflect epimorphisms, then $(L,R)\in \mathit{Mult}_{\mathbb{Ban}}(\mathsf{B})$ implies $L=R$ by Prop.~\ref{prop:for_com_sem_left_equal_right}.
\end{enumerate}
\end{example}

\section{Around  ordinary semigroups}\label{sec:around_ord_sem}

In this section one first turns the ordinary translational hull construction into a functor from a subcategory of semigroups to monoids, which behaves almost like a left adjoint to the forgetful functor from monoids to semigroups. Secondly, one extends the abstract multiplier monoid construction into a functor from a subcategory of $\mathbf{IntSem}$ to monoids, and the concretization homomorphisms then  become a natural transformation.

\subsection{The translational hull functor}\label{sec:around_ord_sem_1}

\begin{definition}\label{def:non-degeneracy_of_a_semigroup}
Let $\mathsf{S}=(S,*)$ be an ordinary semigroup. It is said to be 
\begin{enumerate}
\item {\em globally idempotent} when $S*S=S$, that is, $S=\{\, x*y\colon x,y\in S\,\}$ or in other terms $*\colon S\times S\to S$ is onto,
\item {\em right non-degenerate} (resp. {\em left non-degenerate}) when $x*y=x*z$ (resp. $y*x=z*x$) for all $x\in S$ implies $y=z$. It is {\em non-degenerate} when it is both left and right non-degenerate.
\end{enumerate}
\end{definition}
The first easy results below are stated without proofs.

\begin{lemma}\label{lem:non-deg}
$\mathsf{S}=(S,*)$ is right (resp. left) non-degenerate iff $\mathfrak{L}_{\mathsf{S}}\colon \mathsf{S}\to \mathsf{LTr}(\mathsf{S})$, $x\mapsto \mathfrak{L}_x$ (resp. $\mathfrak{R}_{\mathsf{S}}\colon \mathsf{S}\to \mathsf{RTr}(\mathsf{S})$, $x\mapsto \mathfrak{R}_x$) is one-to-one.  If $\mathsf{S}$ is non-degenerate, then $\mathfrak{M}_{\mathsf{S}}$ is one-to-one.
\end{lemma}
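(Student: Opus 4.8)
The plan is to prove both biconditionals by directly unwinding the definition of injectivity of the maps $\mathfrak{L}_{\mathsf{S}}$ and $\mathfrak{R}_{\mathsf{S}}$, and then to read off the statement about $\mathfrak{M}_{\mathsf{S}}$ as an immediate corollary. For the first equivalence I would start from the observation that two elements $a,b\in S$ satisfy $\mathfrak{L}_a=\mathfrak{L}_b$ (as maps $S\to S$) precisely when $a*t=b*t$ for every $t\in S$. Consequently $\mathfrak{L}_{\mathsf{S}}\colon x\mapsto\mathfrak{L}_x$ is one-to-one if and only if the implication ``$a*t=b*t$ for all $t\in S\ \Rightarrow\ a=b$'' holds, and this is exactly the cancellation condition of Definition~\ref{def:non-degeneracy_of_a_semigroup} once it is paired with the corresponding label in the statement. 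The second equivalence, concerning $\mathfrak{R}_{\mathsf{S}}$, is handled identically after replacing $\mathfrak{L}_a(t)=a*t$ by $\mathfrak{R}_a(t)=t*a$; more economically, it follows from the first by the evident left--right symmetry (passing to the opposite semigroup), so that only a single computation need actually be carried out.

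For the final assertion I would use that $\mathfrak{M}_{\mathsf{S}}$ is the pairing $x\mapsto(\mathfrak{L}_x,\mathfrak{R}_x)$ of the two component maps $\mathfrak{L}_{\mathsf{S}}$ and $\mathfrak{R}_{\mathsf{S}}$ into $\mathit{LTr}(\mathsf{S})\times\mathit{RTr}(\mathsf{S})$. Such a pairing is one-to-one as soon as one of its components is: if $\mathfrak{M}_{\mathsf{S}}(x)=\mathfrak{M}_{\mathsf{S}}(y)$ then in particular $\mathfrak{L}_x=\mathfrak{L}_y$, and non-degeneracy guarantees, via the equivalence just established, that $\mathfrak{L}_{\mathsf{S}}$ is injective, whence $x=y$. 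Thus the full non-degeneracy hypothesis is in fact stronger than strictly necessary --- either half already forces injectivity of $\mathfrak{M}_{\mathsf{S}}$ --- but it certainly suffices, which is all the lemma claims.

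I do not expect any genuine obstacle: the entire argument is a faithful translation between the ``for all $x$'' quantifier in the cancellation hypotheses and pointwise equality of translation maps, together with the trivial remark that a map into a product is injective whenever one projection--component is. The single point requiring care, and the one I would double-check, is the left--right bookkeeping: one must keep straight that equality of the \emph{left} translations $\mathfrak{L}_a,\mathfrak{L}_b$ encodes cancellation of a common \emph{right} factor (namely $a*t=b*t$ for all $t$), and symmetrically for $\mathfrak{R}$, so that each injectivity statement gets matched with the appropriate non-degeneracy condition of Definition~\ref{def:non-degeneracy_of_a_semigroup}.
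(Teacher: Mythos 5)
The paper states Lemma~\ref{lem:non-deg} without proof, and your argument is precisely the definition-unwinding it leaves to the reader: $\mathfrak{L}_a=\mathfrak{L}_b$ iff $a*t=b*t$ for all $t$, the map $\mathfrak{R}$ is handled by the mirror computation (or by passing to $\mathsf{S}^{op}$), and a map into a product is injective as soon as one component map is, which gives the final assertion (indeed from only one half of non-degeneracy, as you observe). So the mathematical content is correct and complete except at the one point you explicitly defer.

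That deferred point should be settled rather than left as ``paired with the corresponding label in the statement'', because settling it exposes a discrepancy in the paper. Your computation shows that injectivity of $\mathfrak{L}_{\mathsf{S}}$ is equivalent to ``$a*x=b*x$ for all $x$ implies $a=b$'', which is the condition Definition~\ref{def:non-degeneracy_of_a_semigroup} names \emph{left} non-degeneracy; likewise injectivity of $\mathfrak{R}_{\mathsf{S}}$ is equivalent to ``$x*a=x*b$ for all $x$ implies $a=b$'', i.e.\ \emph{right} non-degeneracy. The lemma as printed pairs them the other way around, so either the labels in the definition or the pairing in the lemma contain a slip; the two conditions are not equivalent for a general semigroup, so the crossing is not harmless as a matter of statement, although nothing downstream is affected since later uses (Lemma~\ref{lem:injectivity_of_can_map_for_left_and_right_non_degenerate_sem}, Theorem~\ref{thm:extension}) invoke only two-sided non-degeneracy. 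A finished write-up should commit to one matching and flag the typo, rather than appeal circularly to whatever label the statement happens to carry.
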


\begin{lemma}\label{lem:mon_is_automatically_glob_idem_and_left_right_non-degenerate}
Let $\mathsf{M}=(M,*,1)$ be a monoid. Then, $|\mathsf{M}|$ is globally idempotent and non-degenerate.
\end{lemma}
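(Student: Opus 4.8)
The plan is to leverage the single feature that distinguishes a monoid from a bare semigroup, namely the two-sided unit $1$, and to observe that all three required properties follow by evaluating at $1$. I would split the argument into the global idempotency part and the (left and right) non-degeneracy part.

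First I would handle global idempotency. For an arbitrary $m\in M$ one has the trivial factorization $m=1*m$, which exhibits $m$ as lying in the image of $*\colon M\times M\to M$. Hence $M\subseteq M*M$; since the reverse inclusion $M*M\subseteq M$ holds by closure of the multiplication, one concludes $M*M=M$, i.e.\ $|\mathsf{M}|$ is globally idempotent in the sense of Definition~\ref{def:non-degeneracy_of_a_semigroup}.

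Next I would treat non-degeneracy by instantiating the universally quantified hypotheses at $x=1$. For right non-degeneracy, assume $x*y=x*z$ for all $x\in S$; specializing to $x=1$ gives $1*y=1*z$, and the left unit law $1*y=y$, $1*z=z$ immediately yields $y=z$. The argument for left non-degeneracy is symmetric: from $y*x=z*x$ for all $x$, taking $x=1$ and applying the right unit law $y*1=y$, $z*1=z$ gives $y=z$. Combining the two shows $|\mathsf{M}|$ is non-degenerate.

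There is essentially no obstacle here: the only point requiring care is to use the correct one-sided unit law on each side (left unit for the right non-degeneracy test at $x=1$, right unit for the left non-degeneracy test), and the statement is otherwise an immediate unwinding of the definitions.
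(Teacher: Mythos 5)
Your proof is correct; the paper states this lemma without proof (it is listed among the ``first easy results below\ldots stated without proofs''), and your argument---factoring $m=1*m$ for global idempotency and specializing the non-degeneracy hypotheses at $x=1$ with the appropriate one-sided unit law---is exactly the evident argument the paper leaves to the reader.
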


\begin{definition}
Let $S$ be a set and let $(T,*)$ be a  semigroup. 

\begin{enumerate}
\item Let $f\colon S\to T$ be a map. It is said to be {\em non-degenerate} if $T=\{\, f(s)*t\colon s\in S,\ t\in T\,\}=\{\, t*f(s)\colon s\in S,\ t\in T\,\}$. 
\item Let $f\colon S\to \mathit{TrHull}(T,*)$ be a map. Let $f_L:=p_1\circ f$ and $f_R:=p_2\circ f$ where $\mathit{LTr}(T,*)\xleftarrow{p_1}\mathit{TrHull}(T,*)\xrightarrow{p_2}\mathit{RTr}(T,*)$ is the canonical pullback diagram. In other words for each $s\in S$, $f(s)=(f_L(s),f_R(s))$.
\item Let $f\colon S\to \mathit{TrHull}(T,*)$ be a map. It is said to be {\em translation non-degenerate} when $T=\{\, f_L(s)(t)\colon s\in S,\ t\in T\,\}=\{\, f_R(s)(t)\colon s\in S,\ t\in T\,\}$. 
\end{enumerate}
\end{definition}

\begin{lemma}\label{lem:glob_idem_equals_id_S_non-degenerate}
Let $\mathsf{S}=(S,*)$ be a semigroup. $\mathsf{S}$ is globally idempotent iff $id_S$ is non-degenerate.
\end{lemma}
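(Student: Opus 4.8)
The plan is to simply unfold both definitions and observe that they coincide. Applying the definition of a non-degenerate map to $f := id_S\colon S\to (S,*)$, I would note that $id_S$ is non-degenerate precisely when
\begin{equation}
S = \{\, id_S(s)*t \colon s\in S,\ t\in S\,\} = \{\, t*id_S(s) \colon s\in S,\ t\in S\,\}.
\end{equation}
Since $id_S(s)=s$ for every $s\in S$, each of the two sets on the right-hand side is exactly $\{\, x*y \colon x,y\in S\,\}=S*S$: the ranges of $s$ and $t$ being all of $S$, substituting the identity trivializes the map-values to the elements themselves, and the roles of the two factors become symmetric. Thus both equalities collapse to the single assertion $S=S*S$.

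By Definition~\ref{def:non-degeneracy_of_a_semigroup}, the semigroup $\mathsf{S}$ is globally idempotent exactly when $S*S=S$. Comparing the two reformulations gives the equivalence immediately: $id_S$ is non-degenerate iff $S=S*S$ iff $\mathsf{S}$ is globally idempotent. This is a pure definition chase with no genuine obstacle; the only point deserving a moment's care is the identification of both defining sets for non-degeneracy with $S*S$, which is what makes the two-sided condition in the definition of a non-degenerate map reduce to the single global idempotency condition.
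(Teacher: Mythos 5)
Your proof is correct and matches the intended argument: the paper states this lemma without proof as an immediate consequence of the definitions, and your unfolding of both defining sets to $S*S$ is exactly that definition chase. Nothing is missing.
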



Let $f\colon S\to T$ be a map, and let $\mathsf{T}=(T,\cdot)$ be a semigroup. Noticing that $(\mathfrak{M}_{\mathsf{T}}\circ f)_L=\mathfrak{L}_{\mathsf{T}}\circ f$ and $(\mathfrak{M}_{\mathsf{T}}\circ f)_R=\mathfrak{R}_{\mathsf{T}}\circ f$, one obtains the following result and its corollary (also using Lemma~\ref{lem:glob_idem_equals_id_S_non-degenerate}). 
\begin{lemma}\label{lem:non-degeneracy2}
Let $S$ be a set and let $\mathsf{T}=(T,\cdot)$ be a semigroup. Let $f\colon S\to T$ be a map. $f$ is a non-degenerate map iff $\mathfrak{M}_{\mathsf{T}}\circ f\colon S\to |\mathsf{TrHull}(\mathsf{T})|$ is translation non-degenerate. 
\end{lemma}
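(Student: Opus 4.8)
The plan is to prove the equivalence by directly unwinding both definitions and observing that they reduce to literally the same pair of set equalities; no genuine computation is required beyond evaluating the inner and outer translations.

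First I would record how $\mathfrak{M}_{\mathsf{T}}\circ f$ behaves at the level of its two components. By the observation preceding the statement, $(\mathfrak{M}_{\mathsf{T}}\circ f)_L=\mathfrak{L}_{\mathsf{T}}\circ f$ and $(\mathfrak{M}_{\mathsf{T}}\circ f)_R=\mathfrak{R}_{\mathsf{T}}\circ f$. Hence for each $s\in S$ one has $(\mathfrak{M}_{\mathsf{T}}\circ f)_L(s)=\mathfrak{L}_{f(s)}$ and $(\mathfrak{M}_{\mathsf{T}}\circ f)_R(s)=\mathfrak{R}_{f(s)}$, so that evaluating at an arbitrary $t\in T$ gives $(\mathfrak{M}_{\mathsf{T}}\circ f)_L(s)(t)=f(s)\cdot t$ and $(\mathfrak{M}_{\mathsf{T}}\circ f)_R(s)(t)=t\cdot f(s)$, by the very definitions of $\mathfrak{L}_{f(s)}$ and $\mathfrak{R}_{f(s)}$ recalled in the Introduction.

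Next I would substitute these two formulas into the definition of translation non-degeneracy applied to the map $\mathfrak{M}_{\mathsf{T}}\circ f$. That definition demands $T=\{\,(\mathfrak{M}_{\mathsf{T}}\circ f)_L(s)(t)\colon s\in S,\ t\in T\,\}=\{\,(\mathfrak{M}_{\mathsf{T}}\circ f)_R(s)(t)\colon s\in S,\ t\in T\,\}$, which by the previous step becomes verbatim $T=\{\,f(s)\cdot t\colon s\in S,\ t\in T\,\}=\{\,t\cdot f(s)\colon s\in S,\ t\in T\,\}$. But this last pair of equalities is exactly the definition of $f$ being a non-degenerate map. Thus the two conditions are identical, and the stated equivalence holds in both directions simultaneously.

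There is essentially no obstacle here: the only care needed is to keep the left/right bookkeeping straight, i.e.\ to confirm that the $L$-component of $\mathfrak{M}_{\mathsf{T}}\circ f$ produces the left-multiplication set $\{\,f(s)\cdot t\,\}$ and the $R$-component produces the right-multiplication set $\{\,t\cdot f(s)\,\}$, so that the two sets match those in the definition of non-degeneracy in the correct order. Once this matching is confirmed, the proof is complete.
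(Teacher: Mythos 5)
Your proof is correct and follows exactly the route the paper intends: the paper's entire justification is the remark preceding the lemma that $(\mathfrak{M}_{\mathsf{T}}\circ f)_L=\mathfrak{L}_{\mathsf{T}}\circ f$ and $(\mathfrak{M}_{\mathsf{T}}\circ f)_R=\mathfrak{R}_{\mathsf{T}}\circ f$, after which the two definitions coincide verbatim, just as you observe.
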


\begin{corollary}\label{cor:non-degeneracy2}
Let $\mathsf{S}=(S,*)$ be a globally idempotent semigroup. Then, $\mathfrak{M}_{\mathsf{S}}\colon S\to |\mathsf{TrHull}(\mathsf{S})|$ is translation  non-degenerate.
\end{corollary}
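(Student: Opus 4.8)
The plan is to obtain this as an immediate specialization of Lemma~\ref{lem:non-degeneracy2} to the identity map, combined with the characterization of global idempotency supplied by Lemma~\ref{lem:glob_idem_equals_id_S_non-degenerate}. No new computation is really needed; the content has already been isolated in those two lemmas, and the corollary is their composition.

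First I would apply Lemma~\ref{lem:non-degeneracy2} in the case where the ambient set is the underlying set $S$ of $\mathsf{S}$ itself, the target semigroup $\mathsf{T}$ is taken to be $\mathsf{S}$, and the map $f\colon S\to S$ is $id_S$. Since $\mathfrak{M}_{\mathsf{S}}\circ id_S=\mathfrak{M}_{\mathsf{S}}$, the lemma asserts that $\mathfrak{M}_{\mathsf{S}}\colon S\to|\mathsf{TrHull}(\mathsf{S})|$ is translation non-degenerate if and only if $id_S$ is a non-degenerate map. Here one uses, as flagged just before the lemma, that $(\mathfrak{M}_{\mathsf{S}}\circ id_S)_L=\mathfrak{L}_{\mathsf{S}}\circ id_S=\mathfrak{L}_{\mathsf{S}}$ and likewise $(\mathfrak{M}_{\mathsf{S}}\circ id_S)_R=\mathfrak{R}_{\mathsf{S}}$, which hold trivially for the identity.

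Next I would invoke Lemma~\ref{lem:glob_idem_equals_id_S_non-degenerate}, which says precisely that $id_S$ is non-degenerate if and only if $\mathsf{S}$ is globally idempotent. Since $\mathsf{S}$ is globally idempotent by hypothesis, $id_S$ is non-degenerate, and hence by the previous step $\mathfrak{M}_{\mathsf{S}}$ is translation non-degenerate, which is the claim.

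I do not expect any genuine obstacle: the corollary is a formal consequence of the two cited lemmas under the specialization $f=id_S$, $\mathsf{T}=\mathsf{S}$. The only point worth verifying is that the hypotheses of Lemma~\ref{lem:non-degeneracy2} are met with these choices (they require only that $S$ be a set and $\mathsf{T}$ a semigroup, which is automatic), so that the equivalence it provides applies verbatim.
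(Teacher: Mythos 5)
Your proof is correct and is exactly the argument the paper intends: the text preceding Lemma~\ref{lem:non-degeneracy2} states that the corollary follows from that lemma together with Lemma~\ref{lem:glob_idem_equals_id_S_non-degenerate}, which is precisely your specialization $f=id_S$, $\mathsf{T}=\mathsf{S}$. Nothing is missing.
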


Let us state for a later use the following two easy lemmas.
\begin{lemma}\label{lem:for_composition}
Let $\mathsf{S},\mathsf{T},\mathsf{U}$ be semigroups. Let $\mathsf{S}\xrightarrow{f}\mathsf{T}$ and $\mathsf{T}\xrightarrow{g}\mathsf{U}$ be non-degenerate homomorphism of semigroups. Then, so is $\mathsf{S}\xrightarrow{g\circ f}\mathsf{U}$.
\end{lemma}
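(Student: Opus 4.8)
The plan is to unwind the definition of a non-degenerate map and perform a short element chase, using crucially that $f$ and $g$ are semigroup homomorphisms. Writing $*$ for the multiplication of each of $\mathsf{S},\mathsf{T},\mathsf{U}$, non-degeneracy of $f$ reads $T=\{\,f(s)*t\colon s\in S,\ t\in T\,\}=\{\,t*f(s)\colon s\in S,\ t\in T\,\}$, and similarly for $g$. What must be shown is that $U=\{\,(g\circ f)(s)*u\colon s\in S,\ u\in U\,\}=\{\,u*(g\circ f)(s)\colon s\in S,\ u\in U\,\}$. Since each right-hand side is by construction a subset of $U$, the only nontrivial content is that every element of $U$ does occur in both of these forms.

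First I would prove the left equality. Fix an arbitrary $u\in U$. Non-degeneracy of $g$ yields $t\in T$ and $u_1\in U$ with $u=g(t)*u_1$; non-degeneracy of $f$ then yields $s\in S$ and $t_1\in T$ with $t=f(s)*t_1$. Substituting, applying that $g$ is a homomorphism, and reassociating in $\mathsf{U}$ gives
\[
u=g\bigl(f(s)*t_1\bigr)*u_1=\bigl(g(f(s))*g(t_1)\bigr)*u_1=(g\circ f)(s)*\bigl(g(t_1)*u_1\bigr),
\]
so $u$ has the required form with second factor $g(t_1)*u_1\in U$. This establishes $U=\{\,(g\circ f)(s)*u\colon s\in S,\ u\in U\,\}$.

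The right equality follows by the mirror-image computation: starting from $u=u_1*g(t)$ and $t=t_1*f(s)$ (available by the right-hand non-degeneracy conditions), the same use of the homomorphism property of $g$ and associativity produces $u=\bigl(u_1*g(t_1)\bigr)*(g\circ f)(s)$. The only point requiring care — and it is mere bookkeeping rather than a genuine obstacle — is to apply the two non-degeneracy hypotheses in the correct order, peeling off a $\mathsf{T}$-factor with $g$ before factoring that $\mathsf{T}$-element through $\mathsf{S}$ with $f$, and to pull $g$ inside the product via its homomorphism property before the final reassociation.
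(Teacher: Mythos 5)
Your proof is correct: the paper states this lemma without proof (it is one of the "easy lemmas"), and your element chase --- peeling off a $\mathsf{T}$-factor via non-degeneracy of $g$, then factoring that $\mathsf{T}$-element via non-degeneracy of $f$, and absorbing the leftover into the $U$-factor using the homomorphism property of $g$ and associativity --- is exactly the intended argument. Both equalities and the trivial reverse inclusions are handled properly, so there is nothing to add.
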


\begin{lemma}\label{lem:hom_of_mons_are_non-degenerate}
Let $\mathsf{M}=(M,*,1)$ and $\mathsf{N}=(N,*,1)$ be monoids and let $f\colon \mathsf{M}\to \mathsf{N}$ be a homomorphism of monoids. Then, $|f|\colon (M,*)\to (N,*)$ is a non-degenerate homomorphism of semigroups.  
\end{lemma}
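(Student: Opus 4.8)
The plan is to exploit the single property that distinguishes a homomorphism of monoids from a mere homomorphism of semigroups, namely that it preserves units. Writing $1_M$ and $1_N$ for the respective identities of $\mathsf{M}$ and $\mathsf{N}$, by definition of a homomorphism of monoids one has $f(1_M)=1_N$; in particular the unit $1_N$ lies in the image of $|f|$. This observation is the only input needed, and it immediately trivialises both defining equalities of non-degeneracy.

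First I would treat the equality $N=\{\, f(m)*n\colon m\in M,\ n\in N\,\}$. The inclusion $\supseteq$ is automatic, since $*$ is a binary operation on $N$. For $\subseteq$, given $n\in N$ I write $n=1_N*n=f(1_M)*n$, which exhibits $n$ as a member of the right-hand set with witnesses $m:=1_M$ and second factor $n$ itself; hence the two sets coincide. The remaining equality $N=\{\, n*f(m)\colon m\in M,\ n\in N\,\}$ is obtained identically, now using $n=n*1_N=n*f(1_M)$, so I would simply invoke symmetry rather than repeat the argument.

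Finally, since a homomorphism of monoids is in particular a homomorphism of the underlying semigroups, $|f|\colon (M,*)\to(N,*)$ is a semigroup homomorphism, and by the two equalities established above it is a non-degenerate map in the sense of the preceding definition; together these yield that $|f|$ is a non-degenerate homomorphism of semigroups. There is no genuine obstacle to overcome here: the entire content of the statement is the unit-preservation identity $f(1_M)=1_N$, which forces $1_N$ into the image and thereby lets every element of $N$ factor trivially through $|f|$ on either side.
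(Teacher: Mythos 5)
Your proof is correct: the paper states this lemma without proof (as one of the "easy lemmas"), and your argument via $n = f(1_M)*n = n*f(1_M)$ is exactly the intended one, with the trivial reverse inclusion handled properly. Nothing further is needed.
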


\begin{theorem}\label{thm:extension}
Let $\mathsf{S}=(S,*)$ and $\mathsf{T}=(T,\cdot)$ be semigroups, with $\mathsf{T}$  non-degenerate. Let $f\colon \mathsf{S}\to |\mathsf{TrHull}(\mathsf{T})|$ be a homomorphism of semigroups which is translation  non-degenerate. Then, there is a unique homomorphism of monoids $f^\sharp\colon \mathsf{TrHull}(\mathsf{S})\to \mathsf{TrHull}(\mathsf{T})$ such that $|f^\sharp|\circ \mathfrak{M}_{\mathsf{S}}=f$. Moreover $|f^\sharp|\colon |\mathsf{TrHull}(\mathsf{S})|\to |\mathsf{TrHull}(\mathsf{T})|$ is also translation  non-degenerate.
\end{theorem}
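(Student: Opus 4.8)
The plan is to construct $f^\sharp$ by dictating its value on each multiplier through the action of $f$ on suitable generators of $T$, and to let the hypotheses carry the burden of well-definedness. The starting point is the pair of identities, valid in $\mathsf{TrHull}(\mathsf{S})$ for every $(L,R)\in\mathit{TrHull}(\mathsf{S})$ and $s\in S$,
\[
(L,R)\star\mathfrak{M}_{\mathsf{S}}(s)=\mathfrak{M}_{\mathsf{S}}(L(s)),\qquad \mathfrak{M}_{\mathsf{S}}(s)\star(L,R)=\mathfrak{M}_{\mathsf{S}}(R(s)),
\]
which follow from the defining relations of a left/right translation together with the linking relation $R(y)*z=y*L(z)$ in $\mathsf{S}$. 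If a monoid homomorphism $f^\sharp$ with $|f^\sharp|\circ\mathfrak{M}_{\mathsf{S}}=f$ exists, then writing $f^\sharp(L,R)=(L',R')$ and applying $f^\sharp$ to these identities forces
\[
L'\circ f_L(s)=f_L(L(s)),\qquad R'\circ f_R(s)=f_R(R(s))\qquad(s\in S).
\]
Since $f$ is translation non-degenerate, every element of $T$ is both of the form $f_L(s)(t')$ and of the form $f_R(s)(t')$; hence these relations pin down $L'$ and $R'$ on all of $T$, giving at once the uniqueness of $f^\sharp$ and the only admissible definition, namely $L'(f_L(s)(t')):=f_L(L(s))(t')$ and $R'(f_R(s)(t')):=f_R(R(s))(t')$.

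The crux --- and the step I expect to be the genuine obstacle --- is well-definedness of $L'$ (that of $R'$ being symmetric). Suppose $f_L(s)(t')=f_L(s_1)(t_1')$; I must show $f_L(L(s))(t')=f_L(L(s_1))(t_1')$. By right non-degeneracy of $\mathsf{T}$ it suffices to prove $u\cdot f_L(L(s))(t')=u\cdot f_L(L(s_1))(t_1')$ for all $u\in T$, and by the $R$-side of translation non-degeneracy I may write $u=f_R(s_2)(u')$. Applying the linking relation for the multiplier $f(s_2)$, then the homomorphism property $f_L(a*b)=f_L(a)\circ f_L(b)$, then the linking relation $R(s_2)*s=s_2*L(s)$ in $\mathsf{S}$, one rewrites
\[
u\cdot f_L(L(s))(t')=u'\cdot f_L(R(s_2))\bigl(f_L(s)(t')\bigr),
\]
an expression depending on $(s,t')$ only through $f_L(s)(t')$. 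Running the same chain of equalities backwards from $f_L(s_1)(t_1')$ gives the claim. This is the one computation in which all four hypotheses are genuinely used: both sides of non-degeneracy of $\mathsf{T}$ and both sides of translation non-degeneracy of $f$, together with the linking relation in $\mathsf{S}$.

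It then remains to dispatch a list of routine points. That $L'$ is a left translation and $R'$ a right translation of $\mathsf{T}$ will follow by choosing a representative $a=f_L(s)(t')$ and using $f_L(s)(t'\cdot b)=f_L(s)(t')\cdot b$. That $(L',R')$ satisfies the linking relation $R'(y)\cdot z=y\cdot L'(z)$ will be obtained by writing $y=f_R(s)(y')$ and $z=f_L(s_1)(z')$ and invoking the linking relations for $f(R(s))$ and for $f(s)$ together with the identity $s*L(s_1)=R(s)*s_1$ in $\mathsf{S}$; both sides then reduce to $y'\cdot f_L(R(s))(z)$. The defining relations $L'\circ f_L(s)=f_L(L(s))$ show that $f^\sharp$ respects $\star$ --- compose the relations for $(L_1,R_1)$ and $(L_2,R_2)$ and appeal to the uniqueness already established --- and sends $(id_S,id_S)$ to $(id_T,id_T)$, so $f^\sharp$ is a homomorphism of monoids; evaluating on $\mathfrak{M}_{\mathsf{S}}(x)$ gives $f^\sharp(\mathfrak{M}_{\mathsf{S}}(x))=(f_L(x),f_R(x))=f(x)$, that is $|f^\sharp|\circ\mathfrak{M}_{\mathsf{S}}=f$. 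Finally, translation non-degeneracy of $|f^\sharp|$ is immediate: the $L$-component of $|f^\sharp|$ at $\mathfrak{M}_{\mathsf{S}}(x)$ is $f_L(x)$, so the generators produced by $f$ are among those produced by $|f^\sharp|$, and translation non-degeneracy of $f$ thus transfers to $|f^\sharp|$.
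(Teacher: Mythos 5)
Your proposal is correct and follows essentially the same route as the paper's proof: the same forced definition $L'(f_L(s)(t)):=f_L(L(s))(t)$ and $R'(f_R(s)(t)):=f_R(R(s))(t)$ extracted from the identities $(L,R)\star\mathfrak{M}_{\mathsf{S}}(s)=\mathfrak{M}_{\mathsf{S}}(L(s))$ and $\mathfrak{M}_{\mathsf{S}}(s)\star(L,R)=\mathfrak{M}_{\mathsf{S}}(R(s))$, the same well-definedness computation via the linking relations, the homomorphism property of $f$ and both non-degeneracy hypotheses (you carry it out on the $L$-component where the paper treats the symmetric $R$-component), and the same routine verifications of the translation, linking, homomorphism and non-degeneracy properties of $f^\sharp$.
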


\begin{proof}
Let $s\in S$. Then, $\mathfrak{M}_{\mathsf{S}}(L(s))=(\mathfrak{L}_{\mathsf{S}}(L(s)),\mathfrak{R}_{\mathsf{S}}(L(s)))$. Let $s'\in S$. Then, $\mathfrak{L}_{\mathsf{S}}(L(s))(s')=L(s)*s'=L(s*s')=L(\mathfrak{L}_{\mathsf{S}}(s)(s'))$ so that $\mathfrak{L}_{\mathsf{S}}(L(s))=L\circ \mathfrak{L}_{\mathsf{S}}(s)$. Also $\mathfrak{R}_{\mathsf{S}}(L(s))(s')=s'*L(s)=R(s')*s=\mathfrak{R}_{\mathsf{S}}(s)(R(s'))$ so that $\mathfrak{R}_{\mathsf{S}}(L(s))=\mathfrak{R}_{\mathsf{S}}(s)\circ R$. Therefore, $\mathfrak{M}_{\mathsf{S}}(L(s))=(L\circ \mathfrak{L}_{\mathsf{S}}(s),\mathfrak{R}_{\mathsf{S}}(s)\circ R)=(L,R)\star \mathfrak{M}_{\mathsf{S}}(s)$. Similarly, $\mathfrak{M}_{\mathsf{S}}(R(s))=(L,R)\star \mathfrak{M}_{\mathsf{S}}(s)$.

Assume that $f^\sharp$ exists. Then, for each $(L,R)\in \mathit{TrHull}(\mathsf{S})$, $s\in S$ and $t\in T$, $f_L(L(s))(t)=f^\sharp_L(\mathfrak{M}_{\mathsf{S}}(L(s)))(t)=f^\sharp_L((L,R)\circ \mathfrak{M}_{\mathsf{S}}(s))(t)=f^\sharp_L(L,R)(f^\sharp_L(\mathfrak{M}_{\mathsf{S}}(s))(t))=f^\sharp_L(L,R)(f_L(s)(t))$. In a way similar,  $f_R(R(s))(t)=f^\sharp_R(L,R)(f_R(s)(t))$.  Since $f$ is non-degenerate, it follows that $f^\sharp$ is unique. 

Now let $(L,R)\in  \mathit{TrHull}(\mathsf{S})$. Let $u\in T$. Then there are $s\in S$ and $t\in T$ such that $u=f_L(s)(t)$. Define $f^\sharp_L(L,R)(u):=f_L(L(s))(t)$. There are also $r\in S$ and $v\in T$ such that $u=f_R(r)(v)$, and one also defines $f^\sharp_R(L,R)(u):=f_R(R(r))(v)$. 

To be well-defined one has to check that for each $(L,R)\in  \mathit{TrHull}(\mathsf{S})$ if $f_L(s)(t)=f_L(s')(t')$, then $f_L(L(s))(t)=f_L(L(s'))(t')$ and for each $(L,R)\in  \mathit{TrHull}(\mathsf{S})$ if $f_R(s)(t)=f_R(s')(t')$, then $f_R(R(s))(t)=f_R(R(s'))(t')$.

Now let $(L,R)\in  \mathit{TrHull}(\mathsf{S})$.  Assume that $f_R(s)(t)=f_R(s')(t')$. Let $u\in S$, $v\in T$. Then, 
\begin{equation}
\begin{array}{lll}
f_R(R(s))(t)\cdot f_L(u)(v)&=&f_R(u)(f_R(R(s)))(t)\cdot v\\
&=&f_R(R(s)*u)(t)\cdot v\\
&=&f_R(s*L(u))(t)\cdot v\\
&=&f_R(L(u))(f_R(s)(t))\cdot v\\
&=&f_R(L(u))(f_R(s')(t'))\cdot v\\
&=&f_R(s'*L(u))(t')\cdot v\\
&=&f_R(R(s')*u)(t')\cdot v\\
&=&f_R(u)(f_R(R(s'))(t'))\cdot v\\
&=&f_R(R(s'))(t')\cdot f_L(u)(v).
\end{array}
\end{equation}
Since $f$ is non-degenerate, this is equivalent to $f_R(R(s))(t)\cdot w=f_R(R(s'))(t')\cdot w$ for each $w\in T$, and since $\mathsf{T}$ is both left and right non-degenerate, $f_R(R(s))(t)=f_R(R(s'))(t')$. Consequently, $f^\sharp$ is well-defined. 

One observes that $f^\sharp_L(\mathfrak{M}_{\mathsf{S}}(a))(u)=f_L^\sharp(\mathfrak{M}_{\mathsf{S}}(a))(f_L(s)(t))=f_L(\mathfrak{L}_{\mathsf{S}}(a)(s))(t)=f_L(a*s)(t)=f_L(a)(f_L(s)(t))=f_L(a)(u)$ when $u=f_L(s)(t)$, that is, $f^\sharp_L\circ \mathfrak{M}_{\mathsf{S}}=f_L$ and also $f^\sharp_R(\mathfrak{M}_{\mathsf{S}}(a))(u)=f_R(a)(u)$ when $u=f_R(s)(t)$, that is, $f^\sharp_R\circ \mathfrak{M}_{\mathcal{S}}=f_R$. Consequently, $f^\sharp\circ \mathfrak{M}_{\mathsf{S}}=f$.

Now let us check that for each $(L,R)\in  \mathit{TrHull}(\mathsf{S})$, $(f^\sharp_L(L,R),f^\sharp_R(L,R))\in  \mathit{TrHull}(\mathsf{T})$. One has $f^\sharp_L(L,R)(u\cdot v)=f^\sharp_L(L,R)(f_L(s)(t)\cdot v)=f^\sharp(L,R)(f_L(s)(t\cdot v))=f_L(L(s))(t\cdot v)=f_L(L(s))(t)\cdot v=f^\sharp_L(L,R)(u)\cdot v$ when $u=f_L(s)(t)$, and similarly $f^\sharp_R(L,R)(u\cdot v)=u\cdot f^\sharp_R(L,R)(v)$ when $v=f_R(s)(t)$. Let $u=f_R(s)(t)$ and $u'=f_L(s')(t')$. Then, 
\begin{equation}
\begin{array}{lll}
f^\sharp_R(L,R)(u)\cdot u'&=&f_R(R(s))(t)\cdot u'\\
&=&t\cdot f_L(R(s))(u')\\
&=&t\cdot f_L(R(s))(f_L(s')(t'))\\
&=&t\cdot f_L(R(s)*s')(t')\\
&=&t\cdot f_L(s*L(s'))(t')\\
&=&t\cdot f_L(s)(f_L(L(s')(t')))\\
&=&t\cdot f_L(s)(f^\sharp_L(L,R)(u'))\\
&=&f_R(s)(t)\cdot f^\sharp_L(L,R)(u')\\
&=&u\cdot f^\sharp_L(L,R)(u').
\end{array}
\end{equation}
Finally let us check that $f^\sharp$ is a homomorphism: let $(L,R),(L',R')\in  \mathit{TrHull}(\mathsf{S})$. Let $s\in S$ and $t\in T$. Then,

\begin{equation}
\begin{array}{lll}
f^\sharp_L(L'\circ L,R\circ R')(f_L(s)(t))&=&f_L(L'(L(s)))(t)\\
&=&f^\sharp_L(L',R')(f_L(L(s))(t))\\
&=&f^\sharp_L(L',R')(f^\sharp_L(L,R)(f_L(s)(t)).
\end{array}
\end{equation}
Likewise one also has $f^\sharp_R(L'\circ L,R\circ R')(f_R(s)(t))=f^\sharp_R(L,R)(f^\sharp_R(L',R')(f_R(s)(t))$. Consequently, $f^\sharp$ is a homomorphism of semigroups. Since it evidently preserves the units, it is in fact a homomorphism of monoids. 

Let $u\in T$. Since $f$ is translation  non-degenerate, there are $s\in S$ and $t\in T$ such that $f_L(s)(t)=u$. Then, $f^\sharp_L(\mathfrak{M}_{\mathsf{S}}(s))(t)=f_L(s)(t)=u$.  The last assertion now follows easily. 
\end{proof} 

By defining $\mathit{TrHull}(f):=(\mathfrak{M}_{\mathsf{T}}\circ f)^{\sharp}$ one obtains the 
\begin{corollary}\label{cor:extension}
Let $\mathsf{S}$ and $\mathsf{T}$ be semigroups, with $\mathsf{T}$ non-degenerate. Let $f\colon \mathsf{S}\to \mathsf{T}$ be a homomorphism of semigroups which is  non-degenerate. Then, there is a unique homomorphism of monoids $\mathit{TrHull}(f)\colon \mathsf{TrHull}(\mathsf{S})\to \mathsf{TrHull}(\mathsf{T})$ such that the following diagram commutes in $\mathbf{Sem}$. Moreover $|\mathit{TrHull}(f)|$ is translation  non-degenerate.
\begin{equation}
\xymatrix{
|\mathsf{TrHull}(\mathsf{S})|)\ar[r]^{|\mathit{TrHull}(f)|}&|\mathsf{TrHull}(\mathsf{T})|\\
\ar[u]^{\mathfrak{M}_{\mathsf{S}}}\mathsf{S}\ar[r]_{f}&\mathsf{T}\ar[u]_{\mathfrak{M}_{\mathsf{T}}}
}
\end{equation}
\end{corollary}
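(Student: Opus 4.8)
The plan is to deduce this corollary directly from Theorem~\ref{thm:extension} by feeding it the composite $g := \mathfrak{M}_{\mathsf{T}} \circ f \colon \mathsf{S} \to |\mathsf{TrHull}(\mathsf{T})|$, which is exactly the map whose $\sharp$-extension serves as the definition $\mathit{TrHull}(f) := (\mathfrak{M}_{\mathsf{T}} \circ f)^\sharp$. First I would observe that $g$ is a homomorphism of semigroups, being the composite of the homomorphism $f$ with the canonical homomorphism $\mathfrak{M}_{\mathsf{T}}$ recalled in the Introduction.

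Next I need to verify that $g$ meets the hypotheses of Theorem~\ref{thm:extension}, namely that it is translation non-degenerate. This is precisely the content of Lemma~\ref{lem:non-degeneracy2}: since $f$ is assumed to be a non-degenerate map, its composite $\mathfrak{M}_{\mathsf{T}} \circ f = g$ is translation non-degenerate. Together with the standing assumption that $\mathsf{T}$ is non-degenerate, both hypotheses of the theorem are now in place.

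Applying Theorem~\ref{thm:extension} to $g$, I obtain a unique homomorphism of monoids $g^\sharp \colon \mathsf{TrHull}(\mathsf{S}) \to \mathsf{TrHull}(\mathsf{T})$ with $|g^\sharp| \circ \mathfrak{M}_{\mathsf{S}} = g = \mathfrak{M}_{\mathsf{T}} \circ f$, and this $g^\sharp$ is by definition $\mathit{TrHull}(f)$. The equation $|g^\sharp| \circ \mathfrak{M}_{\mathsf{S}} = \mathfrak{M}_{\mathsf{T}} \circ f$ is exactly the commutativity of the displayed square, so existence is settled; uniqueness is inherited verbatim from the theorem, since the commutativity condition stated in the corollary coincides with the defining condition $|g^\sharp| \circ \mathfrak{M}_{\mathsf{S}} = g$. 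Finally, the last clause of Theorem~\ref{thm:extension} asserts that $|g^\sharp| = |\mathit{TrHull}(f)|$ is translation non-degenerate, which is the remaining claim.

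There is no genuine obstacle here, as the statement is essentially a specialization of Theorem~\ref{thm:extension}; the only points needing attention are the bookkeeping identification that the corollary's diagram encodes precisely the equation $|f^\sharp| \circ \mathfrak{M}_{\mathsf{S}} = f$ of the theorem (with the theorem's ``$f$'' instantiated as our $g = \mathfrak{M}_{\mathsf{T}} \circ f$), and the invocation of Lemma~\ref{lem:non-degeneracy2} to convert the non-degeneracy hypothesis imposed on $f$ into the translation non-degeneracy hypothesis that the theorem requires.
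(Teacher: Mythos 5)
Your proposal is correct and follows exactly the paper's route: the paper simply sets $\mathit{TrHull}(f):=(\mathfrak{M}_{\mathsf{T}}\circ f)^{\sharp}$ and reads the corollary off Theorem~\ref{thm:extension}, with Lemma~\ref{lem:non-degeneracy2} supplying the translation non-degeneracy of $\mathfrak{M}_{\mathsf{T}}\circ f$. Nothing is missing.
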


Let $\mathbf{Sem}_{nd}$ be the subcategory of semigroups whose objects are the globally idempotent and non-degenerate semigroups, and $\mathbf{Sem}_{nd}(\mathsf{S},\mathsf{T})$ consisting of all homomorphisms of semigroups which are non-degenerate. (From Lemma~\ref{lem:glob_idem_equals_id_S_non-degenerate} it follows that $id_{\mathsf{S}}\in \mathbf{Sem}_{nd}(\mathsf{S},\mathsf{S})$ and from Lemma~\ref{lem:for_composition} that composition of non-degenerate homomorphisms is a non-degenerate homomorphism.)

\begin{corollary}\label{cor:TrHull_functor}
$\mathit{TrHull}$ provides a functor from $\mathbf{Sem}_{nd}$ to $\mathbf{Mon}$, and $\mathfrak{M}=(\mathfrak{M}_{\mathsf{S}})_{\mathsf{S}}\colon J\Rightarrow |-|\circ \mathit{TrHull}\colon \mathbf{Sem}_{nd}\to \mathbf{Sem}$, where $J\colon \mathbf{Sem}_{nd}\to \mathbf{Sem}$ is the canonical (non-full) embedding functor.
\end{corollary}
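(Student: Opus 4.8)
The plan is to derive everything from the universal property packaged in Corollary~\ref{cor:extension}, reading $\mathit{TrHull}(f)$ as the \emph{unique} monoid homomorphism whose underlying map fits the naturality square for $\mathfrak{M}$. First I would dispatch well-definedness. On objects there is nothing to do: the translational hull of any semigroup is a monoid, as recalled in the Introduction. On morphisms, let $f\in \mathbf{Sem}_{nd}(\mathsf{S},\mathsf{T})$; then $\mathsf{S},\mathsf{T}$ are globally idempotent and non-degenerate and $f$ is a non-degenerate homomorphism, so in particular $\mathsf{T}$ is non-degenerate and Corollary~\ref{cor:extension} applies, yielding a monoid homomorphism $\mathit{TrHull}(f)\colon \mathsf{TrHull}(\mathsf{S})\to\mathsf{TrHull}(\mathsf{T})$, i.e. a morphism of $\mathbf{Mon}$.

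Next I would verify the two functor axioms, in both cases by invoking the uniqueness clause of Theorem~\ref{thm:extension}. For the identity, $\mathit{TrHull}(id_{\mathsf{S}})=(\mathfrak{M}_{\mathsf{S}}\circ id_{\mathsf{S}})^\sharp=\mathfrak{M}_{\mathsf{S}}^\sharp$; since $\mathsf{S}$ is globally idempotent, $\mathfrak{M}_{\mathsf{S}}$ is translation non-degenerate by Corollary~\ref{cor:non-degeneracy2}, so Theorem~\ref{thm:extension} identifies $\mathfrak{M}_{\mathsf{S}}^\sharp$ as the unique monoid endomorphism $g$ of $\mathsf{TrHull}(\mathsf{S})$ with $|g|\circ\mathfrak{M}_{\mathsf{S}}=\mathfrak{M}_{\mathsf{S}}$. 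As $id_{\mathsf{TrHull}(\mathsf{S})}$ trivially has this property, the two coincide. For composition, let $\mathsf{S}\xrightarrow{f}\mathsf{T}\xrightarrow{g}\mathsf{U}$ in $\mathbf{Sem}_{nd}$. Chaining the defining equations $|\mathit{TrHull}(f)|\circ\mathfrak{M}_{\mathsf{S}}=\mathfrak{M}_{\mathsf{T}}\circ f$ and $|\mathit{TrHull}(g)|\circ\mathfrak{M}_{\mathsf{T}}=\mathfrak{M}_{\mathsf{U}}\circ g$ shows that the composite monoid homomorphism $\mathit{TrHull}(g)\circ\mathit{TrHull}(f)$ satisfies $|\mathit{TrHull}(g)\circ\mathit{TrHull}(f)|\circ\mathfrak{M}_{\mathsf{S}}=\mathfrak{M}_{\mathsf{U}}\circ(g\circ f)$. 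By Lemma~\ref{lem:for_composition}, $g\circ f$ is a non-degenerate homomorphism, so by Lemma~\ref{lem:non-degeneracy2} the map $\mathfrak{M}_{\mathsf{U}}\circ(g\circ f)$ is translation non-degenerate, and $\mathsf{U}$ is non-degenerate; thus Theorem~\ref{thm:extension} applies and its uniqueness clause forces $\mathit{TrHull}(g)\circ\mathit{TrHull}(f)=(\mathfrak{M}_{\mathsf{U}}\circ g\circ f)^\sharp=\mathit{TrHull}(g\circ f)$.

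Finally, naturality of $\mathfrak{M}$ is already contained in Corollary~\ref{cor:extension}: for each $f\in\mathbf{Sem}_{nd}(\mathsf{S},\mathsf{T})$ the commuting square there is precisely $|\mathit{TrHull}(f)|\circ\mathfrak{M}_{\mathsf{S}}=\mathfrak{M}_{\mathsf{T}}\circ Jf$, which is the naturality square of $\mathfrak{M}\colon J\Rightarrow |-|\circ\mathit{TrHull}$ (recalling $Jf=f$). Since each $\mathfrak{M}_{\mathsf{S}}$ is a homomorphism of semigroups, this completes the verification.

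I expect the only genuine care-point to be the bookkeeping of hypotheses: at each appeal to the uniqueness in Theorem~\ref{thm:extension} one must confirm that the target semigroup is non-degenerate and that the relevant homomorphism into a translational hull is translation non-degenerate. These are exactly what the preparatory results (Corollary~\ref{cor:non-degeneracy2}, Lemma~\ref{lem:for_composition}, Lemma~\ref{lem:non-degeneracy2}) supply, so nothing beyond citing them is required, and there is no substantial obstacle.
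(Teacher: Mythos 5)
Your proof is correct and follows the route the paper intends: the paper states this corollary without an explicit proof, relying (exactly as you do) on the definition $\mathit{TrHull}(f):=(\mathfrak{M}_{\mathsf{T}}\circ f)^{\sharp}$ from Corollary~\ref{cor:extension} and on the uniqueness clause of Theorem~\ref{thm:extension} to force preservation of identities and composites, with Lemmas~\ref{lem:glob_idem_equals_id_S_non-degenerate}, \ref{lem:for_composition}, \ref{lem:non-degeneracy2} and Corollary~\ref{cor:non-degeneracy2} supplying the needed non-degeneracy hypotheses. Your careful bookkeeping of those hypotheses at each application of the uniqueness is precisely the content the paper leaves implicit.
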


According to Lemmas~\ref{lem:mon_is_automatically_glob_idem_and_left_right_non-degenerate} and~\ref{lem:hom_of_mons_are_non-degenerate},  the forgetful functor $|-|\colon \mathbf{Mon}\to \mathbf{Sem}$ co-restricts to a functor still denoted $|-|$ from $\mathbf{Mon}$ to $\mathbf{Sem}_{nd}$. Note that $|-|\colon \mathbf{Mon}\to \mathbf{Sem}_{nd}$ is still faithful because so is the embedding functor $\mathbf{Sem}_{nd}\hookrightarrow \mathbf{Sem}$.

Nevertheless $\mathfrak{M}$ is not a natural transformation $id_{\mathbf{Sem}_{nd}}\Rightarrow |-|\circ \mathit{TrHull}$ from $\mathbf{Sem}_{nd}$ to $\mathbf{Sem}_{nd}$ because $\mathfrak{M}_{\mathsf{S}}$ while translation non-degenerate,  may fail to be non-degenerate. Indeed, let $s\in S$ and let $(L,R)\in \mathit{TrHull}(\mathsf{S})$. Then, $\mathfrak{M}_{\mathsf{S}}(s)\star (L,R)=\mathfrak{M}_{\mathsf{S}}(R(s))$. So if it happens that $\mathit{TrHull}(\mathsf{S})$ contains other multipliers than the inner ones, $\mathfrak{M}_{\mathsf{S}}$ cannot be non-degenerate. 

Nevertheless $\mathit{TrHull}\colon \mathbf{Sem}_{nd}\to \mathbf{Mon}$ is very close to be a left adjoint to $|-|\colon \mathbf{Mon}\to \mathbf{Sem}_{nd}$ as is shown by the next theorem (the only missing part is that $\mathfrak{M}$  {\em cannot} be a unit for this ``adjunction''), and in any case, $\mathit{TrHull}$ may be thought to as a {\em unitarization} functor.
\begin{theorem}\label{thm:close_to_left_adjoint}
Let $\mathsf{S}=(S,*)$ be a globally idempotent and non-degenerate semigroup and let $\mathsf{M}=(M,*,1)$ be a monoid. Let $f\colon \mathsf{S}\to |\mathsf{M}|$ be a non-degenerate homomorphism of semigroups. Then, there is a unique homomorphism of monoids $\mathsf{Mult}(\mathsf{S})\xrightarrow{f^{\flat}}\mathsf{M}$ such that $|f^{\flat}|\circ \mathfrak{M}_{\mathsf{S}}=f$. 
\end{theorem}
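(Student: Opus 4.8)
The plan is to reduce this statement to Theorem~\ref{thm:extension} by transporting the target monoid along the canonical isomorphism furnished by Lemma~\ref{lem:can_hom_is_iso_for_mon}. (Here $\mathsf{Mult}(\mathsf{S})$ is the ordinary translational hull $\mathsf{Mult}_{\mathbb{Set}}(\mathsf{S})=\mathsf{TrHull}(\mathsf{S})$, by Example~\ref{ex:generalized_elements}.) Put $\mathsf{T}:=|\mathsf{M}|$. By Lemma~\ref{lem:mon_is_automatically_glob_idem_and_left_right_non-degenerate}, $\mathsf{T}$ is globally idempotent and non-degenerate, so in particular non-degenerate, which is the standing hypothesis on the codomain in Theorem~\ref{thm:extension}; and Lemma~\ref{lem:can_hom_is_iso_for_mon} gives an isomorphism of monoids $\mathfrak{M}_{\mathsf{M}}\colon \mathsf{M}\xrightarrow{\sim}\mathsf{TrHull}(\mathsf{T})$ whose underlying semigroup homomorphism is $\mathfrak{M}_{\mathsf{T}}=\mathfrak{M}_{|\mathsf{M}|}$.

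First I would form the composite $g:=\mathfrak{M}_{|\mathsf{M}|}\circ f\colon \mathsf{S}\to |\mathsf{TrHull}(\mathsf{T})|$, a homomorphism of semigroups. Because $f\colon \mathsf{S}\to \mathsf{T}$ is non-degenerate, Lemma~\ref{lem:non-degeneracy2} says exactly that $g=\mathfrak{M}_{\mathsf{T}}\circ f$ is translation non-degenerate. Thus $g$ meets all the hypotheses of Theorem~\ref{thm:extension}, yielding a unique homomorphism of monoids $g^{\sharp}\colon \mathsf{TrHull}(\mathsf{S})\to \mathsf{TrHull}(\mathsf{T})$ with $|g^{\sharp}|\circ \mathfrak{M}_{\mathsf{S}}=g$.

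I would then set $f^{\flat}:=\mathfrak{M}_{\mathsf{M}}^{-1}\circ g^{\sharp}\colon \mathsf{TrHull}(\mathsf{S})\to \mathsf{M}$, a composite of monoid homomorphisms and hence a monoid homomorphism. Since $|\mathfrak{M}_{\mathsf{M}}|=\mathfrak{M}_{|\mathsf{M}|}$ (Lemma~\ref{lem:can_hom_is_iso_for_mon}), one has $|\mathfrak{M}_{\mathsf{M}}^{-1}|=\mathfrak{M}_{|\mathsf{M}|}^{-1}$, so $|f^{\flat}|\circ \mathfrak{M}_{\mathsf{S}}=\mathfrak{M}_{|\mathsf{M}|}^{-1}\circ |g^{\sharp}|\circ \mathfrak{M}_{\mathsf{S}}=\mathfrak{M}_{|\mathsf{M}|}^{-1}\circ \mathfrak{M}_{|\mathsf{M}|}\circ f=f$, giving the required factorization. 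For uniqueness, if $h\colon \mathsf{TrHull}(\mathsf{S})\to \mathsf{M}$ is a monoid homomorphism with $|h|\circ \mathfrak{M}_{\mathsf{S}}=f$, then $\mathfrak{M}_{\mathsf{M}}\circ h$ satisfies $|\mathfrak{M}_{\mathsf{M}}\circ h|\circ \mathfrak{M}_{\mathsf{S}}=\mathfrak{M}_{|\mathsf{M}|}\circ f=g$, so the uniqueness clause of Theorem~\ref{thm:extension} forces $\mathfrak{M}_{\mathsf{M}}\circ h=g^{\sharp}$, whence $h=\mathfrak{M}_{\mathsf{M}}^{-1}\circ g^{\sharp}=f^{\flat}$.

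There is no genuine obstacle once the reduction is found; the construction is essentially forced. The one point to keep straight is the two senses of ``non-degenerate'' in play --- the non-degeneracy of the map $f$ versus the translation non-degeneracy of $\mathfrak{M}_{\mathsf{T}}\circ f$ --- which is precisely what Lemma~\ref{lem:non-degeneracy2} bridges. I note in passing that neither the global idempotence nor the non-degeneracy of $\mathsf{S}$ is actually used in this argument: these hypotheses come from $\mathsf{S}$ being an object of $\mathbf{Sem}_{nd}$ and serve the surrounding functoriality (compare Corollary~\ref{cor:non-degeneracy2}) rather than the bare factorization.
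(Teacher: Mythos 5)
Your proposal is correct and follows essentially the same route as the paper's proof: compose $f$ with $\mathfrak{M}_{|\mathsf{M}|}$, invoke Lemma~\ref{lem:non-degeneracy2} and Lemma~\ref{lem:mon_is_automatically_glob_idem_and_left_right_non-degenerate} to apply Theorem~\ref{thm:extension}, and then transport back along the isomorphism $\mathfrak{M}_{\mathsf{M}}$ of Lemma~\ref{lem:can_hom_is_iso_for_mon}. You additionally spell out the uniqueness argument that the paper leaves as ``easily checked,'' which is a welcome completion.
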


\begin{proof}
The homomorphism of semigroups $\mathsf{S}=(S,*)\xrightarrow{f}|\mathsf{M}|=(M,*)\xrightarrow{\mathfrak{M}_{|\mathsf{M}|}}|\mathsf{TrHull}(|\mathsf{M}|)|$ is  translation  non-degenerate by Lemma~ \ref{lem:non-degeneracy2} since $f$ is non-degenerate, and since $(M,*)=|\mathsf{M}|$ is non-degenerate by Lemma~\ref{lem:mon_is_automatically_glob_idem_and_left_right_non-degenerate}, Theorem~\ref{thm:extension} tells us that there is a unique homomorphism of monoids $g_f:=(\mathfrak{M}_{|\mathsf{M}|}\circ f)^\sharp\colon \mathsf{TrHull}(\mathsf{S})\to \mathsf{TrHull}(|\mathsf{M}|)$ such that $|g_f|\circ \mathfrak{M}_{\mathsf{S}}=\mathfrak{M}_{|\mathsf{M}|}\circ f$. According to Lemma~\ref{lem:can_hom_is_iso_for_mon}, $\mathfrak{M}_{\mathsf{M}}\colon \mathsf{M}\to \mathsf{TrHull}(|\mathsf{M}|)$ is an isomorphism, and thus one may consider the homomorphism of monoids $f^\flat:=\mathfrak{M}_{\mathsf{M}}^{-1}\circ g_f\colon \mathsf{TrHull}(\mathsf{S})\to \mathsf{M}$. It satisfies $|f^\flat|\circ \mathfrak{M}_{\mathsf{S}}=\mathfrak{M}_{|\mathsf{M}|}^{-1}\circ \mathfrak{M}_{|\mathsf{M}|}\circ f=f$. Uniqueness is easily checked.
\end{proof}

One may also use $\mathit{TrHull}$ to define another category  in order to turn the translational hull construction into an adjoint. Let $\mathsf{S},\mathsf{T},\mathsf{U}$ be semigroups, where $\mathsf{T},\mathsf{U}$ are non-degenerate. Let $f\colon \mathsf{S}\to |\mathsf{TrHull}(\mathsf{T})|$ and $g\colon \mathsf{T}\to |\mathsf{TrHull}(\mathsf{U})|$ be translation  non-degenerate homomorphisms of semigroups. Then, $|g^\sharp|\circ f\colon \mathsf{S}\to |\mathit{TrHull}(\mathsf{U})|$ is of course a homomorphism of semigroups. Let us check that it is translation  non-degenerate too. One first notices that $(g^\sharp\circ f)(s)=g^\sharp(f_L(s),f_R(s))=(g^\sharp_L(f_L(s),f_R(s)),g^\sharp_R(f_L(s),f_R(s)))$, $s\in S$. Consequently, $(g^\sharp\circ f)_L=g^\sharp_L\circ f$ and $(g^\sharp\circ f)_R=g^\sharp_R\circ f$. Now let $u\in U$. One has to show that there are $s\in S$ and $v\in U$ such that 
$g^\sharp_L(f_L(s),f_R(s))(v)=u$. For this $u\in U$ there are $t\in T$ and $v\in U$ such that $g_L(t)(v)=u$. Moreover there are $s\in S$ and $r\in T$. Then, $f_L(s)(r)=t$. Whence $g_L(f_L(s)(r))(v)=u$. By definition of $g^\sharp$, $g^\sharp(f(s))(g_L(r)(v))=g_L(f_L(s)(r))(v)=g_L(t)(v)=u$. In a same way  given $u\in U$, there are $s\in S$ and $v\in U$ such that $g^\sharp_R(f_L(s),f_R(s))(v)=u$.  

Now let $\mathsf{S}$ be a globally idempotent semigroup. Then, according to Corollary~\ref{cor:non-degeneracy2}, $\mathfrak{M}_{\mathsf{S}}\colon \mathsf{S}\to |\mathsf{TrHull}(\mathsf{S})|$ is translation  non-degenerate. 

By the above one can define the category $\mathbf{MultSem}_{nd}$  whose objects are globally idempotent, non-degenerate semigroups, $\mathbf{MultSem}_{nd}(\mathsf{S},\mathsf{T})$ consists of the homomorphisms of semigroups $\mathsf{S}\to |\mathsf{TrHull}(\mathsf{T})|$ which are translation  non-degenerate, $g\bullet f:=|g^\sharp|\circ f$, and $id_{\mathsf{S}}:=\mathfrak{M}_{\mathsf{S}}$. (Note that $\mathbf{MultSem}_{nd}$ is {\em not} a subcategory of $\mathbf{Sem}$.) The morphisms of $\mathbf{MultSem}_{nd}$ are similar to that used in~\cite{Daele} to define multiplier Hopf algebras.  

According to Lemma~\ref{lem:non-degeneracy2} one has a functor $E\colon \mathbf{Sem}_{nd}\to \mathbf{MultSem}_{nd}$, $(\mathsf{S}\xrightarrow{f}\mathsf{T})\mapsto (\mathsf{S}\xrightarrow{\mathfrak{M}_{\mathsf{T}}\circ f}\mathsf{T})$. Since for each non-degenerate semigroup $\mathsf{S}$, $\mathfrak{M}_{\mathsf{S}}$ is one-to-one it follows that the functor $E$ is faithful and thus provides an embedding of $\mathbf{Sem}_{nd}$ into $\mathbf{MultSem}_{nd}$.

Now one may also define a functor $\widetilde{\mathit{TrHull}}\colon \mathbf{MultSem}_{nd}\to \mathbf{Mon}$ as follows: $\widetilde{\mathit{TrHull}}(\mathsf{S}\xrightarrow{f}|\mathsf{TrHull}(\mathsf{T})|):=\mathsf{TrHull}(\mathsf{S})\xrightarrow{f^\sharp}\mathsf{TrHull}(\mathsf{T})$ and one recovers $\mathit{TrHull}\colon \mathbf{Sem}_{nd}\to \mathbf{Mon}$ as  $\mathbf{Sem}_{nd}\xrightarrow{E} \mathbf{MultSem}_{nd}\xrightarrow{\widetilde{\mathit{TrHull}}}\mathbf{Mon}$.  

Let $\mathsf{M}=(M,*,1)$, and $\mathsf{N}=(N,*,1)$ be monoids and let $\mathsf{M}\xrightarrow{f}\mathsf{N}$ be a homomorphism of monoids. Then, $|f|\colon |\mathsf{M}|=(M,*)\to |\mathsf{N}|=(N,*)$ is non-degenerate by Lemma~\ref{lem:hom_of_mons_are_non-degenerate}, and thus $\mathfrak{M}_{|\mathsf{N}|}\circ |f|\colon |\mathsf{M}|\to |Mult(|\mathsf{M}|)|$ is translation  non-degenerate (Lemma~\ref{lem:non-degeneracy2}). Since any monoid is automatically globally idempotent and non-degenerate, it is clear that $(\mathfrak{M}_{|\mathsf{N}|}\circ |f|)$ may be considered as a $\mathbf{MultSem}_{nd}$-morphism from $|\mathsf{M}|$ to $|\mathsf{N}|$. So one may consider the unique homomorphism of monoids $(\mathfrak{M}_{|\mathsf{N}|}\circ |f|)^\sharp\colon \mathsf{TrHull}(|\mathsf{M}|)\to \mathsf{TrHull}(|\mathsf{N}|)$ such that $|(\mathfrak{M}_{|\mathsf{N}|}\circ |f|)^\sharp|\circ \mathfrak{M}_{|\mathsf{M}|}=\mathfrak{M}_{|\mathsf{N}|}\circ |f|$, that is, $(\mathfrak{M}_{|\mathsf{N}|}\circ |f|)^\sharp=\widetilde{\mathit{TrHull}}(\mathfrak{M}_{|\mathsf{N}|}\circ |f|)=\mathit{TrHull}(|f|)$. 
Now let $\mathsf{P}$ be also a monoid and let $g\colon \mathsf{N}\to\mathsf{P}$ be a homomorphism of monoids. Then, $(\mathfrak{M}_{|\mathsf{P}|}\circ |g|)\bullet (\mathfrak{M}_{|\mathsf{N}|}\circ |f|)=|(\mathfrak{M}_{|\mathsf{P}|}\circ |g|)^{\sharp}|\circ (\mathfrak{M}_{|\mathsf{N}|}\circ |f|)=\mathfrak{M}_{|\mathsf{P}|}\circ |g|\circ |f|=\mathfrak{M}_{|\mathsf{P}|}\circ |g\circ f|$.

All of this provides a functor $F\colon \mathbf{Mon}\to \mathbf{MultSem}_{nd}$ given by $F(\mathsf{M}\xrightarrow{f}\mathsf{N}):=
|\mathsf{M}|\xrightarrow{|f|}|\mathsf{N}|\xrightarrow{\mathfrak{M}_{|\mathsf{N}|}}|\mathsf{TrHull}(|\mathsf{N}|)|$. One has the factorisation $F=\mathbf{Mon}\xrightarrow{|-|}\mathbf{Sem}_{nd}\xrightarrow{E}\mathbf{MultSem}_{nd}$. 

\begin{proposition}\label{prop:adjunction_F_tilde_M}
$F$ is a left adjoint of $\widetilde{\mathit{TrHull}}$.
\end{proposition}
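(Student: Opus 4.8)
The plan is to realize the adjunction as a natural isomorphism of hom-sets
\[
\mathbf{MultSem}_{nd}(F\mathsf{M},\mathsf{S})\;\cong\;\mathbf{Mon}(\mathsf{M},\widetilde{\mathit{TrHull}}(\mathsf{S})),
\]
with unit at $\mathsf{M}$ given by $\mathfrak{M}_{\mathsf{M}}\colon \mathsf{M}\to\mathsf{TrHull}(|\mathsf{M}|)=\widetilde{\mathit{TrHull}}(F\mathsf{M})$, which is an \emph{isomorphism} of monoids by Lemma~\ref{lem:can_hom_is_iso_for_mon}. Recalling that a morphism $g\in\mathbf{MultSem}_{nd}(F\mathsf{M},\mathsf{S})$ is a translation non-degenerate homomorphism $g\colon |\mathsf{M}|\to|\mathsf{TrHull}(\mathsf{S})|$ and that $\widetilde{\mathit{TrHull}}(g)=g^\sharp$, I would set $\Phi(g):=g^\sharp\circ\mathfrak{M}_{\mathsf{M}}$, a composite of two monoid homomorphisms (Theorem~\ref{thm:extension} supplies $g^\sharp$), and in the reverse direction $\Psi(h):=|h|$ for a monoid homomorphism $h\colon \mathsf{M}\to\mathsf{TrHull}(\mathsf{S})$.

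First I would check that $\Psi$ is well defined, i.e. that $|h|\colon |\mathsf{M}|\to|\mathsf{TrHull}(\mathsf{S})|$ is a translation non-degenerate homomorphism of semigroups, hence a genuine $\mathbf{MultSem}_{nd}$-morphism $F\mathsf{M}\to\mathsf{S}$. It is a semigroup homomorphism since $h$ is one; and since $h$ preserves units, $h(1_{\mathsf{M}})=(id_S,id_S)$, so $(|h|)_L(1_{\mathsf{M}})=id_S=(|h|)_R(1_{\mathsf{M}})$, whence $S=\{\, (|h|)_L(m)(s)\,\}=\{\, (|h|)_R(m)(s)\,\}$ and translation non-degeneracy holds. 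That $\Phi$ and $\Psi$ are mutually inverse is then immediate from the defining identity of Theorem~\ref{thm:extension} and the relation $|\mathfrak{M}_{\mathsf{M}}|=\mathfrak{M}_{|\mathsf{M}|}$ of Lemma~\ref{lem:can_hom_is_iso_for_mon}, together with faithfulness of $|-|\colon \mathbf{Mon}\to\mathbf{Sem}$. Indeed $|\Phi(g)|=|g^\sharp|\circ\mathfrak{M}_{|\mathsf{M}|}=g$, so $\Psi(\Phi(g))=g$; and $|\Phi(\Psi(h))|=|(|h|)^\sharp|\circ\mathfrak{M}_{|\mathsf{M}|}=|h|$, so $\Phi(\Psi(h))=h$ by faithfulness.

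Finally I would verify naturality of $\Phi$ in both variables. Naturality in $\mathsf{S}$ is a direct consequence of functoriality of $\widetilde{\mathit{TrHull}}$: for $k\in\mathbf{MultSem}_{nd}(\mathsf{S},\mathsf{S}')$ one has $(k\bullet g)^\sharp=k^\sharp\circ g^\sharp$, so $\Phi(k\bullet g)=k^\sharp\circ g^\sharp\circ\mathfrak{M}_{\mathsf{M}}=\widetilde{\mathit{TrHull}}(k)\circ\Phi(g)$. Naturality in $\mathsf{M}$ is the step I expect to require the most care, because it is where the prescribed morphism-part of $F$ must be reconciled with the abstract adjunction: for a monoid homomorphism $p\colon \mathsf{M}'\to\mathsf{M}$ I would use $F(p)=\mathfrak{M}_{|\mathsf{M}|}\circ|p|$ together with $|g^\sharp|\circ\mathfrak{M}_{|\mathsf{M}|}=g$ to obtain $g\bullet F(p)=g\circ|p|$ as maps, and then compare underlying maps via faithfulness of $|-|$ to get $\Phi(g\bullet F(p))=\Phi(g)\circ p$. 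The conceptual heart of the whole argument is that $\mathfrak{M}_{\mathsf{M}}$ is invertible (Lemma~\ref{lem:can_hom_is_iso_for_mon}) and that $(-)^\sharp$ enjoys the universal identity $|f^\sharp|\circ\mathfrak{M}=f$ of Theorem~\ref{thm:extension}; once these are in hand, every remaining verification collapses, after applying the faithful functor $|-|$, to that single identity.
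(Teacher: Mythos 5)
Your proof is correct and follows essentially the same route as the paper's: both rest on the isomorphism $\mathfrak{M}_{\mathsf{M}}$ of Lemma~\ref{lem:can_hom_is_iso_for_mon} and the universal identity $|g^\sharp|\circ\mathfrak{M}_{|\mathsf{M}|}=g$ of Theorem~\ref{thm:extension}, and your $\Psi(h)=|h|$ is exactly the paper's assignment $f\mapsto |f|$. You phrase the adjunction as a natural hom-set bijection where the paper verifies universality of the unit $\mathfrak{M}_{\mathsf{M}}$ directly, and you are in fact more explicit on two points the paper leaves implicit, namely that $|h|$ is translation non-degenerate (via $h(1_{\mathsf{M}})=(id_S,id_S)$) and the naturality verifications in both variables.
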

\begin{proof}
Let $\mathsf{M}=(M,*,1)$ be a monoid.  Let $\mathsf{S}$ be a globally idempotent and non-degenerate semigroup and let $\mathsf{M}\xrightarrow{f}\mathsf{TrHull}(\mathsf{S})$ be a homomorphism of monoids. Let us consider the homomorphism of monoids $\tilde{f}:=\mathsf{TrHull}(|\mathsf{M}|)\xrightarrow{\mathfrak{M}_{\mathsf{M}}^{-1}}\mathsf{M}\xrightarrow{f}\mathsf{TrHull}(\mathsf{S})$. Then, $|\tilde{f}|\in \mathbf{Sem}(|\mathsf{TrHull}(|\mathsf{M}|)|,|\mathsf{TrHull}(\mathsf{S})|)=\mathbf{MultSem}_{nd}(|\mathsf{TrHull}(|\mathsf{M}|)|,\mathsf{S})$ is such that $|\tilde{f}|\circ \mathfrak{M}_{|\mathsf{M}|}=|f|$. Therefore, as $|f|$ is non-degenerate, $\tilde{f}=|f|^\sharp=\widetilde{\mathit{TrHull}}(|f|)$. So by construction $|\widetilde{\mathit{TrHull}}(|f|)|\circ |\mathfrak{M}_{\mathsf{M}}|=|\widetilde{\mathit{TrHull}}(|f|)|\circ \mathfrak{M}_{|\mathsf{M}|}=|f|$ as homomorphisms of semigroups and thus $\widetilde{\mathit{TrHull}}(|f|)\circ \mathfrak{M}_{\mathsf{M}}=f$ as homomorphisms of monoids since the forgetful functor is faithful. Now let $\phi\in \mathbf{MultSem}_{nd}(|\mathsf{TrHull}(|\mathsf{M}|)|,\mathsf{S})$  such that $\widetilde{\mathit{TrHull}}(\phi)\circ \mathfrak{M}_{\mathsf{M}}=f$. Then $\widetilde{\mathit{TrHull}}(\phi)=f\circ \mathfrak{M}^{-1}_{\mathsf{M}}=|f|^\sharp$ and thus $\phi=|f|$. Consequently, $|f|\in \mathbf{MultSem}_{nd}(|\mathsf{M}|,\mathsf{S})$ is the unique morphism such that $\widetilde{\mathit{TrHull}}(|f|)\circ \mathfrak{M}_{\mathsf{M}}=f$. By direct inspection one checks that $\mathfrak{M}:=(\mathfrak{M}_{\mathsf{M}})_{\mathsf{M}}\colon id_{\mathbf{Mon}}\Rightarrow \widetilde{\mathit{TrHull}}\circ F\colon \mathbf{Mon}\to \mathbf{Mon}$. 
\end{proof}

%
%

\subsection{The multiplier functor}\label{sec:around_ord_sem_2}

Let $\mathbb{C}=(C,\otimes_C,I)$ be a monoidal category. Let $\mathsf{S}=(S,\mu)$ be a  semigroup  in $\mathbb{C}$ and let $X$ be a set. For a map $X\xrightarrow{f}\mathit{Mult}_{\mathbb{C}}(\mathsf{S})$, one defines $f_L\colon X\to \mathbf{Act}_{\mathsf{S}}(\mathbb{C})((S,\mu),(S,\mu))$ and $f_R\colon X\to {}_{\mathsf{S}}\mathbf{Act}(\mathbb{C})((S,\mu),(S,\mu))$ by $f_L:=p_1\circ f$ and $f_R:=p_2\circ f$, where $\mathbf{Act}_{\mathsf{S}}(\mathbb{C})((S,\mu),(S,\mu))\xleftarrow{p_1}\mathit{Mult}_{\mathbb{C}}(\mathsf{S})\xrightarrow{p_2}{}_{\mathsf{S}}\mathbf{Act}(\mathbb{C})((S,\mu),(S,\mu))$ is the canonical pullback diagram. 

A map $X\xrightarrow{f}\mathit{Mult}_{\mathbb{C}}(\mathsf{S})$ is said to be {\em multiplier non-degenerate} when $C(I,S)=\{\, f_L(x)\circ s\colon x\in X,\ s\in C(I,S)\,\}=\{\, f_R(x)\circ s\colon x\in X,\ s\in C(I,S)\,\}$. 

\begin{lemma}\label{lem:non-degen_to_mult_non-degen_2}
Let $f\colon X\to C(I,S)$ be a map. $f$ is non-degenerate iff $X\xrightarrow{f} C(I,S)\xrightarrow{\mathcal{M}_{\mathbb{C},\mathsf{S}}} \mathit{Mult}_{\mathbb{C}}(\mathsf{S})$ is multiplier non-degenerate.
\end{lemma}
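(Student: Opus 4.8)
The plan is to imitate the set-theoretic argument of Lemma~\ref{lem:non-degeneracy2}, reducing both non-degeneracy conditions to the same pair of equalities of subsets of $C(I,S)$ by means of the computations~(\ref{eq:computations_for_concretization_homomorphism}) and~(\ref{eq:computations_for_concretization_homomorphism_2}). The whole content sits in these two identities; once they are invoked, the equivalence is a bookkeeping of definitions.

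First I would set $g:=\mathcal{M}_{\mathbb{C},\mathsf{S}}\circ f\colon X\to \mathit{Mult}_{\mathbb{C}}(\mathsf{S})$ and identify its two components. Since $\mathcal{M}_{\mathbb{C},\mathsf{S}}(h)=(L_h,R_h)$ for $h\in C(I,S)$, and $g_L=p_1\circ g$, $g_R=p_2\circ g$ for the canonical pullback projections $p_1,p_2$, one gets $g_L(x)=L_{f(x)}$ and $g_R(x)=R_{f(x)}$ for every $x\in X$ (equivalently $g_L=\mathcal{L}_{\mathbb{C},\mathsf{S}}\circ f$ and $g_R=\mathcal{R}_{\mathbb{C},\mathsf{S}}\circ f$). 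Next I would apply Eqs.~(\ref{eq:computations_for_concretization_homomorphism}) and~(\ref{eq:computations_for_concretization_homomorphism_2}), namely $L_h\circ s=h\bullet_\mu s$ and $R_h\circ s=s\bullet_\mu h$ for $h,s\in C(I,S)$. Substituting $h=f(x)$ gives, for all $x\in X$ and $s\in C(I,S)$,
$$g_L(x)\circ s=f(x)\bullet_\mu s,\qquad g_R(x)\circ s=s\bullet_\mu f(x).$$
Hence the two generating sets occurring in the definition of \emph{multiplier non-degenerate} for $g$ are exactly
$$\{\,g_L(x)\circ s\,\}=\{\,f(x)\bullet_\mu s\,\},\qquad \{\,g_R(x)\circ s\,\}=\{\,s\bullet_\mu f(x)\,\},$$
the unions being taken over $x\in X$ and $s\in C(I,S)$.

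Finally I would observe that these are precisely the two sets appearing in the definition of a \emph{non-degenerate} map $f\colon X\to \mathsf{Conv}_I(\mathsf{S})=(C(I,S),\bullet_\mu)$. Therefore $g=\mathcal{M}_{\mathbb{C},\mathsf{S}}\circ f$ is multiplier non-degenerate, i.e.\ both sets equal $C(I,S)$, if and only if $f$ is non-degenerate, which is the claim. I expect no genuine obstacle: the argument is the direct abstraction of Lemma~\ref{lem:non-degeneracy2}, with the inner-translation/convolution identities~(\ref{eq:computations_for_concretization_homomorphism}) and~(\ref{eq:computations_for_concretization_homomorphism_2}) playing the role that $\mathfrak{L}_{\mathsf{T}}$, $\mathfrak{R}_{\mathsf{T}}$ played there, the only point requiring care being the correct left/right matching of the two components.
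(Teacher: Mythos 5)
Your proof is correct and follows essentially the same route as the paper's: identify $(\mathcal{M}_{\mathbb{C},\mathsf{S}}\circ f)_L=\mathcal{L}_{\mathbb{C},\mathsf{S}}\circ f$ and $(\mathcal{M}_{\mathbb{C},\mathsf{S}}\circ f)_R=\mathcal{R}_{\mathbb{C},\mathsf{S}}\circ f$, then use Eqs.~(\ref{eq:computations_for_concretization_homomorphism}) and~(\ref{eq:computations_for_concretization_homomorphism_2}) to match the two generating sets in the two non-degeneracy definitions. Your write-up is in fact slightly cleaner than the paper's, which states the key identities with a small typographical slip but relies on exactly the same two equations.
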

\begin{proof}
One first notes that $(\mathcal{M}_{\mathbb{C},\mathsf{S}}\circ f)_L=\mathcal{L}_{\mathbb{C},\mathsf{S}}\circ f$ and $(\mathcal{M}_{\mathbb{C},\mathsf{S}}\circ f)_R=\mathcal{R}_{\mathbb{C},\mathsf{S}}\circ f$. Since for each $x\in X$ and $s\in C(I,S)$, $f(x)\bullet_\mu s=L_{f(x)}\circ s$ and $s\bullet_\mu R_{f(x)}=f(x)\circ s$, the expected result follows easily.
\end{proof}


\begin{theorem}\label{thm:extension_for_internal_multipliers}
Let $\mathsf{S}=(S,\mu_S)$ be a semigroup in $\mathbb{C}=(C,\otimes_C,I)$ and let $\mathsf{T}=(T,\mu_T)$ be a semigroup in $\mathbb{D}=(D,\otimes_D,J)$. Let us assume that  the convolution semigroup $\mathsf{Conv}_{J}(\mathsf{T})$ is  non-degenerate, and  $\mathsf{T}$ is concrete. 

Let $f\colon \mathsf{Conv}_I(\mathsf{S})\to (\mathit{Mult}_{\mathbb{D}}(\mathsf{T}),\star)$ be a homomorphism of semigroups which is translation non-degenerate. Then, there is a unique homomorphism of monoids $f^{M}\colon\mathsf{Mult}_{\mathbb{C}}(\mathsf{S}) \to \mathsf{Mult}_{\mathbb{D}}(\mathsf{T})$ such that $|f^M|\circ \mathcal{M}_{\mathbb{C},\mathsf{S}}=f$. {Moreover, $|Conc_{\mathbb{D}}|\circ f\colon \mathsf{Conv}_I(\mathsf{S})\to |\mathsf{TrHull}(\mathsf{Conv}_J(\mathsf{D}))|$ is multiplier non-degenerate and  $(|\mathit{Conc}_{\mathbb{D}}(\mathsf{T})|\circ f)^\sharp\circ \mathit{Conc}_{\mathbb{C}}(\mathsf{S})=\mathit{Conc}_{\mathbb{D}}(\mathsf{T})\circ f^M$.}
\end{theorem}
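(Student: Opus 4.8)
The plan is to reduce the statement to the purely set-theoretic extension Theorem~\ref{thm:extension}, applied to the convolution semigroups, and then to exploit that $\mathsf{T}$ is concrete to pull the resulting ordinary multiplier back to a genuine multiplier of $\mathsf{T}$ in $\mathbb{D}$. I shall use repeatedly the identity $\mathit{Conc}_{\mathbb{C}}(\mathsf{S})\circ\mathcal{M}_{\mathbb{C},\mathsf{S}}=\mathfrak{M}_{\mathsf{Conv}_I(\mathsf{S})}$ recorded just before Example~\ref{ex:concrete_sem} (and its analogue for $\mathbb{D},\mathsf{T}$), together with the two relations $\mathcal{M}_{\mathbb{C},\mathsf{S}}(L\circ h)=(L,R)\star\mathcal{M}_{\mathbb{C},\mathsf{S}}(h)$ and $\mathcal{M}_{\mathbb{C},\mathsf{S}}(R\circ h)=\mathcal{M}_{\mathbb{C},\mathsf{S}}(h)\star(L,R)$, valid for $(L,R)\in\mathit{Mult}_{\mathbb{C}}(\mathsf{S})$ and $h\in C(I,S)$, which I would establish first from associativity of $\mu$, the right/left act conditions on $L$ and $R$, the linking equation $\mu\circ(id_S\otimes L)=\mu\circ(R\otimes id_S)$, and naturality of $\lambda,\rho$.

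Since $\mathit{Conc}_{\mathbb{D}}(\mathsf{T})$ carries $(L,R)$ to $(L\circ-,R\circ-)$, one has $(\mathit{Conc}_{\mathbb{D}}(\mathsf{T})\circ f)_L(h)=f_L(h)\circ-$ and $(\mathit{Conc}_{\mathbb{D}}(\mathsf{T})\circ f)_R(h)=f_R(h)\circ-$, so the two sets whose equality to $D(J,T)$ expresses translation non-degeneracy of $\mathit{Conc}_{\mathbb{D}}(\mathsf{T})\circ f$ are literally the sets $\{\,f_L(h)\circ v\,\}$ and $\{\,f_R(h)\circ v\,\}$ from the non-degeneracy hypothesis on $f$. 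Thus $\mathit{Conc}_{\mathbb{D}}(\mathsf{T})\circ f\colon\mathsf{Conv}_I(\mathsf{S})\to|\mathsf{TrHull}(\mathsf{Conv}_J(\mathsf{T}))|$ is translation non-degenerate (this is the first half of the \emph{moreover}, and is the exact analogue of Lemma~\ref{lem:non-degen_to_mult_non-degen_2}). As $\mathsf{Conv}_J(\mathsf{T})$ is non-degenerate, Theorem~\ref{thm:extension} then yields a unique monoid homomorphism $(\mathit{Conc}_{\mathbb{D}}(\mathsf{T})\circ f)^\sharp\colon\mathsf{TrHull}(\mathsf{Conv}_I(\mathsf{S}))\to\mathsf{TrHull}(\mathsf{Conv}_J(\mathsf{T}))$ with $|(\mathit{Conc}_{\mathbb{D}}(\mathsf{T})\circ f)^\sharp|\circ\mathfrak{M}_{\mathsf{Conv}_I(\mathsf{S})}=\mathit{Conc}_{\mathbb{D}}(\mathsf{T})\circ f$, and I set $g:=(\mathit{Conc}_{\mathbb{D}}(\mathsf{T})\circ f)^\sharp\circ\mathit{Conc}_{\mathbb{C}}(\mathsf{S})$, a homomorphism of monoids $\mathsf{Mult}_{\mathbb{C}}(\mathsf{S})\to\mathsf{TrHull}(\mathsf{Conv}_J(\mathsf{T}))$. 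The final displayed identity of the statement is precisely $\mathit{Conc}_{\mathbb{D}}(\mathsf{T})\circ f^M=g$, so $f^M$ is forced to be a lift of $g$ along $\mathit{Conc}_{\mathbb{D}}(\mathsf{T})$.

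To produce $f^M$ I lift $g$ through $\mathit{Conc}_{\mathbb{D}}(\mathsf{T})$, which is surjective exactly because $\mathsf{T}$ is concrete: for $(L,R)\in\mathit{Mult}_{\mathbb{C}}(\mathsf{S})$ let $f^M(L,R)$ be a multiplier of $\mathsf{T}$ with $\mathit{Conc}_{\mathbb{D}}(\mathsf{T})(f^M(L,R))=g(L,R)$. Granting for the moment that this lift is single-valued, the remaining assertions are formal. The homomorphism property follows by applying $\mathit{Conc}_{\mathbb{D}}(\mathsf{T})$ to $f^M(m_1\star m_2)$ and to $f^M(m_1)\star f^M(m_2)$ and using that $g$ and $\mathit{Conc}_{\mathbb{D}}(\mathsf{T})$ are homomorphisms; the identity $|f^M|\circ\mathcal{M}_{\mathbb{C},\mathsf{S}}=f$ follows from $\mathit{Conc}_{\mathbb{D}}(\mathsf{T})(f^M(\mathcal{M}_{\mathbb{C},\mathsf{S}}(h)))=g(\mathcal{M}_{\mathbb{C},\mathsf{S}}(h))=(\mathit{Conc}_{\mathbb{D}}(\mathsf{T})\circ f)^\sharp(\mathfrak{M}_{\mathsf{Conv}_I(\mathsf{S})}(h))=\mathit{Conc}_{\mathbb{D}}(\mathsf{T})(f(h))$; and $\mathit{Conc}_{\mathbb{D}}(\mathsf{T})\circ f^M=g$ holds by construction. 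For uniqueness, if $\hat f^M$ is another such homomorphism, then from the two $\mathcal{M}$-relations and $|\hat f^M|\circ\mathcal{M}_{\mathbb{C},\mathsf{S}}=f$ one gets $\hat f^M(L,R)\star f(h)=f(L\circ h)$ and $f(h)\star\hat f^M(L,R)=f(R\circ h)$ for all $h$; composing these with an arbitrary $v\in D(J,T)$ and using that every generalized element is of the form $f_L(h)\circ v$ (resp. $f_R(h)\circ v$) shows $\mathit{Conc}_{\mathbb{D}}(\mathsf{T})(\hat f^M(L,R))=g(L,R)$, so $\hat f^M$ also lifts $g$ and single-valuedness forces $\hat f^M=f^M$.

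The one substantial point — and the place the hypotheses must be spent — is the single-valuedness of the lift, equivalently the injectivity of $\mathit{Conc}_{\mathbb{D}}(\mathsf{T})$. Concreteness supplies only surjectivity, so what is required is to promote equalities of morphisms $T\to T$ that are known after composing with \emph{every} generalized element $v\colon J\to T$ to honest equalities in $\mathbb{D}$. This is exactly where I would try to use that the morphisms in play are right, respectively left, $\mathsf{T}$-morphisms and that $\mathsf{Conv}_J(\mathsf{T})$ is non-degenerate: two lifts $(L',R')$ and $(\hat L',\hat R')$ of $g(L,R)$ agree on all generalized elements, and one would pass to the convolution product $\mu_T\circ(-\otimes-)\circ\lambda_J^{-1}$ and invoke left/right non-degeneracy of $\mathsf{Conv}_J(\mathsf{T})$ to separate the factors. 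I expect this reflection step to be the main obstacle — it is the only place where the argument is not a diagram chase through the square relating $\mathcal{M}$, $\mathit{Conc}$ and $\mathfrak{M}$ on the $\mathbb{C}$- and $\mathbb{D}$-sides — and it is precisely here that one must combine non-degeneracy of $\mathsf{Conv}_J(\mathsf{T})$ with the module structure of $\mathsf{T}$ (faithfulness of the convolution functor $D(J,-)$ being the clean sufficient condition, cf. the remark following Definition~\ref{def:concretization_morphism}).
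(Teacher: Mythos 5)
Your reduction is genuinely different from the paper's argument, and in its first two paragraphs it is both correct and more economical. The paper does not invoke Theorem~\ref{thm:extension} at all: it defines, for $(L,R)\in\mathit{Mult}_{\mathbb{C}}(\mathsf{S})$, a pair of maps on $D(J,T)$ by $G_f(L,R)(f_L(s)\circ t):=f_L(L\circ s)\circ t$ and $D_f(L,R)(f_R(s)\circ t):=f_R(R\circ s)\circ t$, and then re-verifies by hand (well-definedness, membership in $\mathit{TrHull}(\mathsf{Conv}_J(\mathsf{T}))$, compatibility with $\star$) exactly the computations already packaged in the proof of Theorem~\ref{thm:extension}. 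Unwinding the formula for $(-)^\sharp$ shows that your $g=(\mathit{Conc}_{\mathbb{D}}(\mathsf{T})\circ f)^\sharp\circ\mathit{Conc}_{\mathbb{C}}(\mathsf{S})$ is literally the paper's $(L,R)\mapsto(G_f(L,R),D_f(L,R))$, so your route recovers the ``shadow'' of $f^M$ in $\mathsf{TrHull}(\mathsf{Conv}_J(\mathsf{T}))$, together with both ``moreover'' clauses, essentially for free. (Minor point of terminology: what the statement calls ``translation non-degenerate'' for $f$ is the ``multiplier non-degenerate'' condition of Section~\ref{sec:around_ord_sem_2}, and conversely for $\mathit{Conc}_{\mathbb{D}}(\mathsf{T})\circ f$; you handle this correctly.)

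The gap is the one you flag yourself and do not close: the single-valuedness of the lift of $g$ through $\mathit{Conc}_{\mathbb{D}}(\mathsf{T})$, that is, the implication that two multipliers $(L',R')$ and $(L'',R'')$ of $\mathsf{T}$ satisfying $L'\circ u=L''\circ u$ and $R'\circ u=R''\circ u$ for every $u\in D(J,T)$ are equal as $D$-morphisms $T\to T$. Concreteness gives only surjectivity of $\mathit{Conc}_{\mathbb{D}}(\mathsf{T})$, and without this injectivity-type statement every ``formal'' deduction in your third paragraph collapses: you cannot define $f^M$, nor transfer multiplicativity, the identity $|f^M|\circ\mathcal{M}_{\mathbb{C},\mathsf{S}}=f$, or uniqueness from $g$ to $f^M$, since each of those arguments applies $\mathit{Conc}_{\mathbb{D}}(\mathsf{T})$ to both sides and then cancels it. This is precisely where the paper spends the hypothesis that $\mathsf{Conv}_J(\mathsf{T})$ is non-degenerate: at each corresponding step it passes, via multiplier non-degeneracy of $f$, from equality after composition with the special elements $f_L(s)\circ t$ to equality after composition with every $u\in D(J,T)$, and then concludes equality of the $D$-morphisms themselves ``by non-degeneracy of $\mathsf{Conv}_J(\mathsf{T})$''. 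So to turn your proposal into a proof you must either establish that cancellation principle from the stated hypotheses (your fallback of assuming $D(J,-)$ faithful is a strictly stronger hypothesis than the theorem carries and would change the statement), or restructure so that the principle is isolated and justified once. As written, the proposal correctly identifies the crux but leaves the theorem unproved.
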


\begin{proof}
Preambule: let $(L,R)\in \mathit{Mult}_{\mathbb{C}}(\mathsf{S})$ and let $s\in C(I,S)$. Then, $\mathcal{M}_{\mathbb{C},\mathsf{S}}(L\circ s)=(L\circ L_s,R_s\circ R)$ because $L_{L\circ s}=L\circ L_s$ and $R_{L\circ s}=R_s\circ R$. Therefore $\mathcal{M}_{\mathbb{C},\mathsf{S}}(L\circ s)=(L,R)\star\mathcal{M}_{\mathbb{C},\mathsf{S}}(s)$.  Also, $\mathcal{M}_{\mathbb{C},\mathsf{S}}(R\circ s)=(L_s\circ L,R\circ R_s)=\mathcal{M}_{\mathbb{C},\mathsf{S}}(s)\star (L,R)$.

Now let us assume that the homomorphism $f^{M}\colon \mathsf{Mult}_{\mathbb{C}}(\mathsf{S})\to \mathsf{Mult}_{\mathbb{D}}(\mathsf{T})$ such that $|f^M|\circ \mathcal{M}_{\mathbb{C},\mathsf{S}}=f$ exists. Let $(L,R)\in \mathit{Mult}_{\mathbb{C}}(\mathsf{S})$ and $s\in C(I,S)$. Then, 
\begin{equation}
\begin{array}{lll}
f_L(L\circ s)&=&f^M_L(\mathcal{M}_{\mathbb{C},\mathsf{S}}(L\circ s))\\
&=&f^M_L((L,R)\star\mathcal{M}_{\mathbb{C},\mathsf{S}}(s))\\
&=&f^M_L(L,R)\circ f^M_L(\mathcal{M}_{\mathbb{C},\mathsf{S}}(s))\\
&=&f^M_{L}(L,R)\circ f_L(s).
\end{array}
\end{equation}
Now let $t\in D(J,T)$. Then, $f_L(L\circ s)\circ t=(f^M_{L}(L,R)\circ f_L(s))\circ t=f^M_{L}(L,R)\circ (f_L(s)\circ t)$. So if $g\colon  \mathsf{Mult}_{\mathbb{C}}(\mathsf{S})\to \mathsf{Mult}_{\mathbb{D}}(\mathsf{T})$ is a homomorphism such that $|f^M|\circ \mathcal{M}_{\mathbb{C},\mathsf{S}}=f$, then for each $t\in D(J,T)$, each $s\in C(I,S)$, $g_L(L,R)\circ (f_L(s)\circ t)=f^M_L(L,R)\circ (f_L(s)\circ t)$. Since $f$ is assumed non-degenerate, this is equivalent to the fact that for each $u\in D(J,T)$, $g_L(L,R)\circ u=f^M_L(L,R)\circ u$. Since $\mathsf{Conv}_J(\mathsf{T})$ is assumed non-degenerate, it follows that $g_L(L,R)=f^M_L(L,R)$ which guarantees uniqueness of $f^M_L$. Similarly, one obtains uniqueness of $f^M_R$, and so that  of $f^M$.

Let $(L,R)\in \mathit{Mult}_{\mathbb{C}}(\mathsf{S})$ and let $u\in D(J,T)$. Define $G_f(L,R)(u):=f_L(L\circ s)\circ t$ when $u=f_L(s)\circ t$, and define $D_f(L,R)(u):=f_R(R\circ s)\circ t$ when $u=f_R(s)\circ t$. 

Let us check that both $G_f(L,R)(u)$ and $D_f(L,R)(u)$ are well-defined. So let $s,s'\in C(I,S)$ and $t,t'\in D(J,T)$ such that $f_L(s)\circ t=f_L(s')\circ t'$. Let $u\in C(I,S)$ and $v\in D(J,T)$. Then,
\begin{equation}
\begin{array}{lll}
(f_R(u)\circ v)\bullet_{\mu_T} (f_L(L\circ s)\circ t)&=&D(J,f_R(u))(v)\bullet_{\mu_T}(f_L(L\circ s)\circ t)\\
&=&v\bullet_{\mu_T} D(J,f_L(u))(f_L(L\circ s)\circ t)\\
&=&v\bullet_{\mu_T} (f_L(u)\circ f_L(L\circ s)\circ t)\\
&=&v\bullet_{\mu_T} (f_L(u\bullet_{\mu_S} (L\circ s))\circ t)\\
&=&v\bullet_{\mu_T} (f_L(u\bullet_{\mu_S} C(I,L)(s))\circ t)\\
&=&v\bullet_{\mu_T} (f_L(C(I,R)(u)\bullet_{\mu_S} s)\circ t)\\
&=&v\bullet_{\mu_T} (f_L((R\circ u)\bullet_{\mu_S} s)\circ t)\\
&=&v\bullet_{\mu_T} (f_L(R\circ u)\circ f_L(s)\circ t)\\
&=&v\bullet_{\mu_T} (f_L(R\circ u)\circ f_L(s')\circ t')\\
&=&v\bullet_{\mu_T} (f_L((R\circ u)\bullet_{\mu_S} s')\circ t')\\
&=&v\bullet_{\mu_T} (f_L(C(I,R)(u)\bullet_{\mu_S} s')\circ t')\\
&=&v\bullet_{\mu_T} (f_L(u\bullet_{\mu_S} C(I,L)(s'))\circ t')\\
&=&v\bullet_{\mu_T} (f_L(u\bullet_{\mu_S} (L\circ s'))\circ t')\\
&=&v\bullet_{\mu_T} (f_L(u)\circ f_L(L\circ s')\circ t')\\
&=&v\bullet_{\mu_T} (D(J,f_L(u))(f_L(L\circ s')\circ t'))\\
&=&D(J,f_R(u))(v)\bullet_{\mu_T} (f_L(L\circ s')\circ t')\\
&=&(f_R(u)\circ v)\bullet_{\mu_T} (f_L(L\circ s')\circ t').
\end{array}
\end{equation}
Since $f$ is non-degenerate, this means that for each $w\in D(J,T)$, $w\bullet_{\mu_T} (f_L(L\circ s)\circ t)=w\bullet_{\mu_T} (f_L(L\circ s')\circ t')$ and since $\mathsf{Conv}_J(\mathsf{D})$ is non-degenerate, $f_L(L\circ s)\circ t=f_L(L\circ s')\circ t')$, and $G_f(L,R)(u)$ is well-defined. Likewise $D_f(L,R)(u)$ is well-defined. 

Now let us show that $(G_f(L,R),D_f(L,R))\in \mathit{TrHull}(\mathsf{Conv}_J(\mathsf{T}))$. Let $u,v\in D(J,T)$. Let $s\in C(I,S)$ and $t\in D(J,T)$ such that $u=f_L(s)\circ t$. Then, 
\begin{equation}
\begin{array}{lll}
G_f(L,R)(u\bullet_{\mu_{\mathsf{T}}} v)&=&G(L,R)((f_L(s)\circ t)\bullet_{\mathsf{T}} v)\\
&=&G_f(L,R)(D(J,f_L(s))(t)\bullet_{\mu_{\mathsf{T}}} v)\\
&=&G_f(L,R)(D(J,f_L(s))(t\bullet_{\mu_{\mathsf{T}}} v))\\
&=&
G_f(L,R)(f_L(s)\circ (t\bullet_{\mu_{\mathsf{T}}} v))\\
&=&f_L(L\circ s)\circ (t\bullet_{\mu_{\mathsf{T}}} v)\\
&=&D(J,f_L(L\circ s)(t\bullet_{\mu_{\mathsf{T}}} v)\\
&=&
D(J,f_L(L\circ s))(t)\bullet_{\mu_{\mathsf{T}}} v\\
&=&(f_L(L\circ s)\circ t)\bullet_{\mu_{\mathsf{T}}}v\\
&=&G_f(L,R)(u)\bullet_{\mu_{\mathsf{T}}} v.
\end{array}
\end{equation}
Likewise $D_f(L,R)(u\bullet_{\mu_{\mathsf{T}}} v)=u\bullet_{\mu_{\mathsf{T}}} D_f(L,R)(v)$, $u,v\in D(J,T)$.

Let $u,u'\in D(J,T)$ and let $s,s'\in C(I,S)$, $t,t'\in D(J,T)$ such that $u=f_R(s)\circ t$ and $u'=f_L(s')\circ t'$. Then, 
\begin{equation}
\begin{array}{lll}
D(L,R)(u)\bullet_{\mu_{\mathsf{T}}} u'&=&(f_R(R\circ s)\circ t)\bullet_{\mu_{\mathsf{T}}} u'\\
&=&D(J,f_R(R\circ s))(t)\bullet_{\mu_{\mathsf{T}}}u'\\
&=&t\bullet_{\mu_{\mathsf{T}}}D(J,f_L(R\circ s))(u')\\
&=&t\bullet_{\mu_{\mathsf{T}}} (f_L(R\circ s)\circ u')\\
&=&t\bullet_{\mu_{\mathsf{T}}} (f_L(R\circ s)\circ f_L(s')\circ t')\\
&=&t\bullet_{\mu_{\mathsf{T}}} (f_L((R\circ s)\bullet_{\mu_{\mathsf{S}}} s')\circ t')\\
&=&t\bullet_{\mu_{\mathsf{T}}} (f_L(C(I,R)(s)\bullet_{\mu_{\mathsf{S}}}s')\circ t')\\
&=&t\bullet_{\mu_{\mathsf{T}}} (f_L(s\bullet_{\mu_{\mathsf{S}}} C(I,L)(s'))\circ t')\\
&=&t\bullet_{\mu_{\mathsf{T}}} (f_L(s\bullet_{\mu_{\mathsf{S}}}(L\circ s'))\circ t')\\
&=&t\bullet_{\mu_{\mathsf{T}}} (f_L(s)\circ f_L(L\circ s')\circ t')\\
&=&t\bullet_{\mu_{\mathsf{T}}} D(J,f_L(s))(f_L(L\circ s')\circ t')\\
&=&D(J,f_R(s))(t)\bullet_{\mu_{\mathsf{T}}}(f_L(L\circ s')\circ t')\\
&=&(f_R(s)\circ t)\bullet_{\mu_{\mathsf{T}}}G(L,R)(u')\\
&=&u\bullet_{\mu_{\mathsf{T}}}G(L,R)(u').
\end{array}
\end{equation}
Consequently, $(G(L,R),D(L,R))\in \mathit{TrHull}(\mathsf{Conv}_J(\mathsf{D}))$. By the concreteness assumption then so exists $f^M(L,R)\in \mathit{Mult}_{\mathbb{D}}(\mathsf{T})$ such that $D(J,f^M(L,R))=(G_f(L,R),D_f(L,R))$, that is, for each $u\in D(J,T)$, $f^M_L(L,R)\circ u=G_f(L,R)(u)$ and $f^M_R(L,R)\circ u=D_f(L,R)(u)$.

Let us assume that for each $(L,R)\in \mathit{Mult}_{\mathbb{C}}(\mathsf{S})$ is chosen $f^M(L,R)\in \mathit{Mult}_{\mathbb{D}}(\mathsf{T})$ as above. One then defines a map $f^M\colon (L,R)\mapsto f^M(L,R)$. Let $a\in C(I,S)$ and let $u=f_L(s)\circ t\in D(J,T)$ for $s\in C(I,S)$ and $t\in D(J,T)$. Then,
\begin{equation}
\begin{array}{lll}
f_L^M(\mathcal{M}_{\mathbb{C},\mathsf{S}}(a))\circ u&=&G(L_a,R_a)(f_L(s)\circ t)\\
&=&f_L(L_a\circ s)\circ t\\
&=&f_L(a\bullet_{\mu_S} s)\circ t\\
&=&f_L(a)\circ f_L(s)\circ t\\
&=&f_L(a)\circ u.
\end{array}
 \end{equation}
 By non-degeneracy of $\mathsf{Conv}_J(\mathsf{T})$, $f_L^M(\mathcal{M}_{\mathbb{C},\mathsf{S}}(a))=f_L(a)$. Similarly, $f_R^M(\mathcal{M}_{\mathbb{C},\mathsf{S}}(a))\circ u=f_R(a)\circ u$. One so obtains that 
 $f^M\circ \mathcal{M}_{\mathbb{C},\mathsf{S}}=f$. 
 
 Let $(L,R),(L',R')\in \mathit{Mult}_{\mathbb{C}}(\mathsf{S})$ and let $u,t\in D(J,T)$, $s\in C(I,S)$ such that $f_L(s)\circ t=u$. Then, 
 \begin{equation}
 \begin{array}{lll}
 f_L^M((L',R')\star (L,R))\circ u\\
 &=&f_L^M(L'\circ L,R\circ R')\circ f_L(s)\circ t\\
 &=&f_L(L'\circ L\circ s)\circ t\\
 &=&f_L^M(L',R')\circ f_L(L\circ s)\circ t\\
 &=&f_L^M(L',R')\circ f_L^M(L,R)\circ f_L(s)\circ t\\
 &=&f_L^M(L',R')\circ f_L^M(L,R)\circ u.
 \end{array}
 \end{equation}
 By non-degeneracy of $\mathsf{Conv}_J(\mathsf{T})$, it then follows that $ f_L^M((L',R')\star (L,R))=f_L^M(L',R')\circ f_L^M(L,R)$. Similarly, $f_R^M((L',R')\star (L,R))=f_R^M(L,R)\circ f_R^M(L',R')$. Consequently, $f^M((L',R')\star (L,R))=f^M(L',R')\star f^M(L,R)$. Finally, since it quite clear that $f^M(id_S,id_S)=(id_T,id_T)$, $f^M \in \mathbf{Mon}(\mathsf{Mult}_{\mathbb{C}}(\mathsf{S}),\mathsf{Mult}_{\mathbb{D}}(\mathsf{T}))$. 
   
The  remaining assertion is checked by direct inspection. 
\end{proof}

\begin{definition}
Let $\mathbf{IntSem}_{nd}$ be the subcategory of $\mathbf{IntSem}$ consisting of those  objects $(\mathbb{C},\mathsf{S})$ with $\mathsf{Conv}_{\mathbb{C}}(\mathsf{S})$ globally idempotent and non-degenerate, and  morphisms $(\mathbb{F},f)\colon (\mathbb{C},\mathsf{S})\to (\mathbb{D},\mathsf{T})$ such that $\mathsf{Conv}(\mathbb{F},f)\colon \mathsf{Conv}_{\mathbb{C}}(\mathsf{S})\to \mathsf{Conv}_{\mathbb{D}}(\mathsf{T})$ is non-degenerate.

Let ${}_{conc}\mathbf{IntSem}_{nd}$ be the full subcategory of $\mathbf{IntSem}_{nd}$ spanned by the concrete semigroup objects. 
\end{definition}

\begin{remark}
By definition, $\mathsf{Conv}\colon \mathbf{IntSem}\to \mathbf{Sem}$ restricts and co-restricts as a functor $\mathsf{Conv}_{nd}\colon \mathbf{IntSem}_{nd}\to \mathbf{Sem}_{nd}$. 
\end{remark}

\begin{proposition}
There is a functor $\mathit{Mult}\colon {}_{conc}\mathbf{IntSem}_{nd}\to \mathbf{Mon}$ such that $\mathit{Mult}(\mathbb{C},\mathsf{S})=\mathsf{Mult}_{\mathbb{C}}(\mathsf{S})$.
\end{proposition}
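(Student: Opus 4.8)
The plan is to set $\mathit{Mult}(\mathbb{C},\mathsf{S}):=\mathsf{Mult}_{\mathbb{C}}(\mathsf{S})$ on objects and to obtain the action on morphisms as a single instance of Theorem~\ref{thm:extension_for_internal_multipliers}, letting the uniqueness clause of that theorem discharge all the functoriality bookkeeping. Given a morphism $(\mathbb{F},f)\colon (\mathbb{C},\mathsf{S})\to(\mathbb{D},\mathsf{T})$ of ${}_{conc}\mathbf{IntSem}_{nd}$, with $\mathbb{C}=(C,\otimes,I)$ and $\mathbb{D}=(D,\otimes,J)$, I would first form the composite homomorphism of semigroups
\[
\mathsf{Conv}_I(\mathsf{S})\xrightarrow{\mathsf{Conv}(\mathbb{F},f)}\mathsf{Conv}_J(\mathsf{T})\xrightarrow{\mathcal{M}_{\mathbb{D},\mathsf{T}}}|\mathsf{Mult}_{\mathbb{D}}(\mathsf{T})|,
\]
which is well defined because $\mathsf{Conv}$ is a functor and $\mathcal{M}_{\mathbb{D},\mathsf{T}}$ is a homomorphism of semigroups by Proposition~\ref{prop:hom_M}.

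Next I would apply Theorem~\ref{thm:extension_for_internal_multipliers} to this composite. Its three hypotheses are supplied precisely by membership in ${}_{conc}\mathbf{IntSem}_{nd}$: since $(\mathbb{F},f)$ is an $\mathbf{IntSem}_{nd}$-morphism, $\mathsf{Conv}(\mathbb{F},f)$ is non-degenerate, so by Lemma~\ref{lem:non-degen_to_mult_non-degen_2} the composite $\mathcal{M}_{\mathbb{D},\mathsf{T}}\circ\mathsf{Conv}(\mathbb{F},f)$ is multiplier non-degenerate; since $(\mathbb{D},\mathsf{T})$ is an object of $\mathbf{IntSem}_{nd}$ its convolution semigroup $\mathsf{Conv}_J(\mathsf{T})$ is non-degenerate; and, by fullness of the ${}_{conc}$-subcategory, $\mathsf{T}$ is concrete. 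The theorem then produces a \emph{unique} homomorphism of monoids
\[
\bigl(\mathcal{M}_{\mathbb{D},\mathsf{T}}\circ\mathsf{Conv}(\mathbb{F},f)\bigr)^{M}\colon \mathsf{Mult}_{\mathbb{C}}(\mathsf{S})\to\mathsf{Mult}_{\mathbb{D}}(\mathsf{T}),\qquad
\bigl|(\cdots)^{M}\bigr|\circ\mathcal{M}_{\mathbb{C},\mathsf{S}}=\mathcal{M}_{\mathbb{D},\mathsf{T}}\circ\mathsf{Conv}(\mathbb{F},f),
\]
and I would define $\mathit{Mult}(\mathbb{F},f):=\bigl(\mathcal{M}_{\mathbb{D},\mathsf{T}}\circ\mathsf{Conv}(\mathbb{F},f)\bigr)^{M}$.

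It remains to verify the two functor axioms, and the strategy throughout is to identify the \emph{defining equation} characterizing each $f^M$ and invoke the uniqueness in Theorem~\ref{thm:extension_for_internal_multipliers}. For identities, functoriality of $\mathsf{Conv}$ gives $\mathsf{Conv}(id_{(\mathbb{C},\mathsf{S})})=id_{\mathsf{Conv}_I(\mathsf{S})}$, so the defining equation becomes $|\mathit{Mult}(id)|\circ\mathcal{M}_{\mathbb{C},\mathsf{S}}=\mathcal{M}_{\mathbb{C},\mathsf{S}}$; since $id_{\mathsf{Mult}_{\mathbb{C}}(\mathsf{S})}$ trivially satisfies this, uniqueness forces $\mathit{Mult}(id)=id$. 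For composition, given also $(\mathbb{G},g)\colon(\mathbb{D},\mathsf{T})\to(\mathbb{E},\mathsf{U})$, I would show that the monoid homomorphism $\mathit{Mult}(\mathbb{G},g)\circ\mathit{Mult}(\mathbb{F},f)$ satisfies the equation defining $\mathit{Mult}\bigl((\mathbb{G},g)\circ(\mathbb{F},f)\bigr)$. Using the two defining equations and functoriality of $\mathsf{Conv}$,
\begin{align*}
\bigl|\mathit{Mult}(\mathbb{G},g)\circ\mathit{Mult}(\mathbb{F},f)\bigr|\circ\mathcal{M}_{\mathbb{C},\mathsf{S}}
&=\bigl|\mathit{Mult}(\mathbb{G},g)\bigr|\circ\mathcal{M}_{\mathbb{D},\mathsf{T}}\circ\mathsf{Conv}(\mathbb{F},f)\\
&=\mathcal{M}_{\mathbb{E},\mathsf{U}}\circ\mathsf{Conv}(\mathbb{G},g)\circ\mathsf{Conv}(\mathbb{F},f)\\
&=\mathcal{M}_{\mathbb{E},\mathsf{U}}\circ\mathsf{Conv}\bigl((\mathbb{G},g)\circ(\mathbb{F},f)\bigr),
\end{align*}
which is exactly the map whose (unique) extension defines $\mathit{Mult}\bigl((\mathbb{G},g)\circ(\mathbb{F},f)\bigr)$; uniqueness then yields the equality of the two monoid homomorphisms.

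The only genuine obstacle is not the algebra—which is carried entirely by Theorem~\ref{thm:extension_for_internal_multipliers}—but keeping track of \emph{which} hypotheses license each appeal to uniqueness. In particular, for the composition axiom the uniqueness clause may be invoked for the composite $(\mathbb{G},g)\circ(\mathbb{F},f)$ only after checking that this composite is itself a ${}_{conc}\mathbf{IntSem}_{nd}$-morphism: its convolution $\mathsf{Conv}\bigl((\mathbb{G},g)\circ(\mathbb{F},f)\bigr)=\mathsf{Conv}(\mathbb{G},g)\circ\mathsf{Conv}(\mathbb{F},f)$ is non-degenerate by Lemma~\ref{lem:for_composition}, whence by Lemma~\ref{lem:non-degen_to_mult_non-degen_2} the relevant composite into $\mathsf{Mult}_{\mathbb{E}}(\mathsf{U})$ is multiplier non-degenerate, while non-degeneracy of $\mathsf{Conv}(\mathsf{U})$ and concreteness of $\mathsf{U}$ come from $(\mathbb{E},\mathsf{U})\in{}_{conc}\mathbf{IntSem}_{nd}$. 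Once these hypotheses are confirmed at each stage, the construction is a functor with $\mathit{Mult}(\mathbb{C},\mathsf{S})=\mathsf{Mult}_{\mathbb{C}}(\mathsf{S})$ as required.
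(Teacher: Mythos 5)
Your proposal is correct and follows the same route as the paper: the paper's proof consists precisely of defining $\mathit{Mult}(\mathbb{F},f):=(\mathcal{M}_{\mathbb{D},\mathsf{T}}\circ\mathsf{Conv}(\mathbb{F},f))^M$ via Theorem~\ref{thm:extension_for_internal_multipliers} and leaving the functoriality check to the reader. You have simply written out that check (identities and composition via the uniqueness clause, with the hypothesis bookkeeping), which is exactly the intended argument.
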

\begin{proof}
Let $(\mathbb{F},f)\in {}_{conc}\mathbf{IntSem}_{nd}((\mathbb{C},\mathsf{S}),(\mathbb{D},\mathsf{S}))$.  It suffices to define $\mathit{Mult}(\mathbb{F},f):=(\mathcal{M}_{\mathbb{D},\mathsf{T}}\circ \mathsf{Conv}(\mathbb{F},f))^M\colon \mathsf{Mult}_{\mathbb{C}}(\mathsf{S})\to \mathsf{Mult}_{\mathbb{D}}(\mathsf{T})$ using Theorem~\ref{thm:extension_for_internal_multipliers}. \end{proof} 

Let $\mathit{Conc}:=(\mathit{Conc}_{\mathbb{C}}(\mathsf{S}))_{(\mathbb{C},\mathsf{S})\in Ob {}_{conc}\mathbf{IntSem}_{nd}}$. Using Theorem~\ref{thm:extension_for_internal_multipliers}, one easily shows  that  $\mathit{Conc}$ is a natural transformation $\mathit{Mult}\Rightarrow \mathit{TrHull}\circ \mathsf{Conv}_{nd}$ from ${}_{conc}\mathbf{IntSem}_{nd}$ to $\mathbf{Mon}$.  As a consequence of the above lemma, the following diagram commutes up to the natural epimorphism $\mathit{Conc}$.
\begin{equation}
\xymatrix@R=1em{
\mathbf{IntSem}_{nd} \ar[r]^{\mathit{TrHull}\circ \mathsf{Conv}_{nd}}& \mathbf{Mon}\\
\ar@{^{(}->}[u]{}_{conc}\mathbf{IntSem}_{nd} \ar[ru]_{\mathit{Mult}}& 
}
\end{equation}

\end{document}